\def\newF{R}
\def\X{{\mathcal X}}
\def\newC{{\mathcal C}} 
\def\newF{R}
\def\X{{\mathcal X}}
\def\calJ{{\mathcal J}}
\newcommand{\cons}{c^N}
\newcommand{\ucons}{u^N}
\newcommand{\estimate}{\hat{\beta}^N(\vec \sigma)}
\title[Parameter Estimation for Undirected Graphical Models with Hard Constraints]{Parameter Estimation for Undirected Graphical Models with Hard Constraints}
\author[Bhattacharya]{Bhaswar B. Bhattacharya}
\address{Department of Statistics, University of Pennsylvania, Philadelphia, PA 19104, USA.}
\email{bhaswar@wharton.upenn.edu}
\author[Ramanan]{Kavita Ramanan}
\address{Division of Applied Mathematics, Brown University, Providence, RI 02912, USA.} 
\email{kavita\_ramanan@brown.edu}
\begin{document}

\begin{abstract}
  The hardcore model on a graph $G$ with parameter $\lambda > 0$ is a  probability measure on the collection of all independent sets of $G$, that assigns to each independent set $I$ a probability proportional to $\lambda^{|I|}$. In this paper we consider the problem of estimating the  parameter $\lambda$ given a single sample from the hardcore model on a graph $G$. To bypass the computational intractability of the  maximum likelihood method, we use the maximum pseudo-likelihood (MPL) estimator, which for the hardcore model has a  surprisingly simple closed form expression. We show that  for any sequence of graphs $\{G_N\}_{N \geq 1}$, where $G_N$ is a graph on $N$ vertices, the MPL estimate of $\lambda$ is $\sqrt N$-consistent (that is, it converges to the true parameter at rate $1/\sqrt N$), whenever the graph sequence has uniformly bounded average degree. We then extend our methods to obtain estimates for the vector of activity parameters in general $H$-coloring models, in which restrictions between adjacent colors are encoded by a constraint graph $H$.  These constitute an important class of Markov random fields that includes all hard-constraint models, which arise  in a broad array of fields including combinatorics, statistical physics, and communication networks. Given a single sample from an $H$-coloring model,  we derive sufficient conditions under which the MPL estimate
  is $\sqrt N$-consistent.  Moreover, we verify the sufficient conditions for  $H$-coloring models for which  
   there is at least one `unconstrained' color (that is, there exists at least one vertex in the constraint graph $H$ that is connected to all vertices), as long as the graph sequence has uniformly bounded average degree. This applies to many $H$-coloring examples such as the Widom-Rowlinson and multi-state hard-core models. On the other hand, for the $q$-coloring model, which falls outside this class, we show that the condition can fail and consistent estimation may be impossible even for graphs with bounded average degree. Nevertheless, we show that the MPL estimate is $\sqrt N$-consistent in the $q$-coloring model when $\{G_N\}_{N \geq 1}$ has bounded average double neighborhood. The presence of hard constraints, as opposed to soft constraints,  leads to new challenges, and our proofs entail applications of the method of exchangeable pairs as well as combinatorial arguments that employ the probabilistic method. 
\end{abstract}

\subjclass[2010]{62F12, 62M40, 05C69, 05C15} 

\keywords{Markov random fields, graphical models, Gibbs measure, parametric inference, hardcore model, graph coloring, communications network, pseudo-likelihood estimator. }

\maketitle

\section{Introduction}

\subsection{Problem Description and  Motivation} 
  
With the ubiquitous presence of network data in modern statistics it has become increasingly important to develop realistic and mathematically tractable models for dependent and structured high-dimensional distributions. Markov random fields (undirected graphical models) are useful primitives for modeling such datasets, which arise naturally in spatial statistics, social networks, image processing, neural networks, and protein folding, among others. This necessitates developing computationally tractable algorithms for fitting these models to data and understanding their statistical efficiencies (rate of convergence).  However, estimating the parameters in such models using the standard maximum likelihood (ML) estimation method is, in general, notoriously hard due to the appearance of an intractable normalizing constant in the likelihood. One approach to circumvent this issue that has turned out to be particularly useful in various cases is the maximum pseudo-likelihood (MPL) estimator \cite{besag_lattice,besag_nl}, which avoids computing the partition function by maximizing an approximation to the likelihood function (a `pseudo-likelihood') based on conditional distributions.

While it is of course classical that if one had multiple independent and identically distributed (i.i.d.) samples from the underlying model, then the standard likelihood-based estimates for the parameters of the model would be consistent, it is {\em a priori} not clear if one could construct
a consistent estimator given less information. Here, we are interested in the problem of estimating the parameters of a parameterized family of Markov random fields  defined on a graph $G_N$ on $N$ vertices, given a single sample $\vec \sigma=(\sigma_1, \sigma_2, \ldots, \sigma_N)$ from the model. For the Ising model, which corresponds to a Markov random field with a binary outcome and a quadratic sufficient statistic, statistical properties of the pseudo-likelihood estimate were first studied in the seminal paper of Chatterjee \cite{chatterjee},  and later extended in \cite{BM16} and \cite{pg_sm} to include Ising models on general weighted graphs and joint estimation of parameters, respectively. These techniques were recently  used by Daskalakis et al. \cite{cd_ising_I,cd_ising_II} to obtain rates of convergence of the MPL estimates in general logistic regression models with dependent observations. More recently, Dagan et al. \cite{cd_ising_estimation} considered the problem of parameter estimation in a more general Ising model and, as a consequence, improved some of the results in \cite{BM16}. 
Related problems in hypothesis testing given a single sample from the Ising model are considered in \cite{gb_testing,rm_sm,rm_gr}.

All the results mentioned above only consider models with {\it soft constraints}, where every configuration between neighboring vertices has a finite (positive or negative) potential. On the other hand there are many interesting models that    
are subject to {\it hard constraints} that  forbid certain spin configurations between neighboring nodes, as in the hardcore (independent set) model and in graph-coloring problems. These models arise in a variety of fields, including communication network
  applications such as spectrum or channel assignment for wireless networks, wavelength assignment in  WDM (wavelength-division-multiplexing) optical networks, carrier-sense multiple access (CSMA) networks, and networks with multicasting (see, for example, \cite{packing,c,coloring_sensor,d,e,nonmonotone_network,mrf_networks,a,wireless} for a highly incomplete sample of works in this area).  The equilibrium behavior in these systems can in many cases be modeled as Markov random fields with hard constraints \cite{c,e,wireless}, and estimating the parameters of the model from one sample correspond to estimation of demand or arrival rates from a snapshot of the configuration of the network. While there are a few results on consistency for (continuous) hardcore models on spatial point processes \cite{consistent_point_process,spatial_asymptotic_distribution}, estimation problems in the discrete case, especially when the underlying graph is not a lattice or when the parameters lie outside the `high temperature' regime, have remained largely unexplored.

\subsection{Discussion of Our Contributions}
In this paper, we initiate the study of parameter estimation in general $H$-coloring models, for some constraint graph $H$, which is an important class of Markov random fields that includes all hard-constraint models, including independent sets (equivalently, the hard-core model), the Widom-Rowlinson model, and proper colorings \cite{bhw,bw,gmrt,georgii_book,wr}.  Using the method of exchangeable pairs developed in \cite{chatterjee_thesis,chatterjee}, we first show that the MPL estimate of the fugacity parameter $\lambda$ in the classical hardcore (independent set) model is $\sqrt N$-consistent (that is, the estimate converges to the true parameter at rate $1/\sqrt N$), for any sequence of graphs $G_N$ on $N$ vertices with  uniformly bounded {\it average} degree, that is, $\sup_{N \geq 1} \frac{1}{N} \sum_{u=1}^N d_u(G_N) < \infty$, where $d_u(G_N)$ denotes the degree of the vertex $u$ in $G_N$ (see Theorem \ref{thm:beta_estimate} for the formal statement). The exchangeable pairs approach allows us to establish the concentration of the derivative of the log-pseudolikelihood function without using any decay of correlation estimates (which are only available in the high-temperature/Dobrushin uniqueness regime), and, as a consequence, we can obtain the rate of consistency of the pseudo-likelihood estimate for all temperatures.

Next, we study the estimation problem for general $H$-coloring models where, unlike in the hardcore model, bounded average degree does not necessarily guarantee consistent estimation (Section \ref{subs-eg1}). We derive conditions under which the MPL estimate is $\sqrt N$-consistent in general $H$-coloring models (see Theorem \ref{thm:beta_H}) and discuss how it can be efficiently computed using a simple gradient descent algorithm.  As a consequence, we are able to show $\sqrt{N}$-consistency of  MPL estimates in $H$-coloring models for which the constraint graph $H$ has at least one vertex that is connected to all vertices, that is, it has an `unconstrained' color, whenever $\{G_N\}_{N \geq 1}$ has uniformly bounded average degree (Corollary \ref{cor:unconstrained_H}). In particular, this shows $\sqrt N$-consistency of the MPL estimates for the multi-state hardcore and the Widom-Rowlinson models, since in both these models the color or state 0 is unconstrained.  Finally, we consider the $q$-coloring model, which is an $H$-coloring model where no vertex is unconstrained. In this case, unlike in models with unconstrained colors, consistent estimation might be impossible for graphs with bounded average degree (as we show in Section \ref{subs-eg2}). Therefore, one needs to go beyond  the boundedness of the average degree (equivalently, size of the 1-neighborhood) for obtaining consistent estimates in the $q$-coloring model. Interestingly, a condition that works is the boundedness of the average size of the 2-neighborhood (the 2-neighborhood of a vertex $v \in G_N$ is the set of vertices that are at a distance of either 1 or 2 from $v$ in $G_N$). More precisely, we show in  Corollary \ref{cor:coloring} that the MPL estimate is $\sqrt N$-consistent in the $q$-coloring model whenever the graph sequence $\{G_N\}_{N \geq 1}$ has uniformly bounded average 2-neighborhood.  One of the technical highlights of this proof is a  probabilistic method argument, inspired by the proof of the celebrated Turan's theorem from extremal combinatorics \cite[page 95]{independent_set_book}, which shows that any graph with a bounded average 2-neighborhood has a linear size subset with mutually disjoint neighborhoods.  

\subsection{Related Work on Structure Learning with Multiple Samples}

Another related area of active research is the problem of structure learning in Markov Random Fields. Here, one is given access to {\it multiple} i.i.d.~samples from a general graphical model, such as the Ising model or the hardcore model, and the goal is to estimate the underlying graph structure. This problem has been extensively studied for models with soft constraints, especially the Ising model (cf. \cite{structure_learning,bresler,graphical_models_algorithmic,highdim_ising,graphical_models_binary} and the references therein).
For the hardcore model \eqref{eq:lambda_I}, the structure learning problem was first studied by Bresler et al. \cite{antiferromagnetic}. They proposed an algorithm for learning the underlying network, which required only logarithmic samples in the number of nodes, for bounded degree graphs. Extensions to general $H$-coloring models and precise conditions for identifiably was obtained in \cite{coloring_structure}.  The related problem of identity testing given multiple samples from a ferromagnetic Ising model was considered by Daskalakis et al. \cite{cd_testing}. For anti-ferromagnetic Ising models and models with hard constraints, in particular for proper colorings, this problem has been recently studied in \cite{coloring_testing}.

All these results, however, are in contrast with the present work, where the underlying graph structure is assumed to be known and the goal is to  estimate the natural parameters given a {\it single} sample from the model. This is motivated by applications where it is more common to have access to only a single sample on the whole network, because  it is often difficult to generate many independent samples from the underlying model within a reasonable amount of time.

\subsection{Asymptotic Notation} For positive sequences $\{a_n\}_{n\geq 1}$ and $\{b_n\}_{n\geq 1}$, $a_n = O(b_n)$ means $a_n \leq C_1 b_n$ and $a_n =\Theta(b_n)$ means $C_2 b_n \leq a_n \leq C_1 b_n$, for all $n$ large enough and positive constants $C_1, C_2$. Similarly, for positive sequences $\{a_n\}_{n\geq 1}$ and $\{b_n\}_{n\geq 1}$, $a_n \lesssim b_n$ means $a_n \leq C_1 b_n$ and $a_n \gtrsim b_n$ means $a_n \geq C_2 b_n$ for all $n$ large enough and positive constants $C_1, C_2$. Moreover, subscripts in the above notation,  for example $O_\square$, $\lesssim_\square$, $\gtrsim_\square$, and $\Theta_\square$,  denote that the hidden constants may depend on the subscripted parameters.

\section{Statements of Main Results}

As  mentioned earlier, to estimate parameters of families of Markov random fields,
  we will use the maximum pseudo-likelihood (MPL) estimator, introduced by Besag \cite{besag_lattice,besag_nl}, which we now define.

\begin{defn}{\em (MPL estimate \cite{besag_lattice,besag_nl})}
  \label{def:MPL}
  Given a discrete random vector $\bm X= (X_1, X_2, \ldots, X_N)$ whose joint distribution is parametrized by an $s$-dimensional parameter $\bm \beta \in \R^s$,  the MPL estimate of $\bm \beta$ is defined as
\begin{equation}\label{eq:mple_defn}
\hat {\bm \beta}^N = \hat{\bm \beta}^N (\bm   X) := \arg\max_{\bm \beta \in \R^s}\prod_{i=1}^N f_i(\bm \beta, \bm X),
\end{equation}
where $f_i(\bm \beta, \bm X)$ is the conditional probability mass function of $X_i$ given $(X_j)_{j \ne i}$. 
\end{defn}

 The quantity $\prod_{i=1}^N f_i(\bm \beta, \bm X)$ is referred to as the
    pseudo-likelihood (function), and its logarithm the log pseudo-likelihood (function).
    The MPL estimate is a maximizer of both  the pseudo-likelihood  and the log pseudo-likelihood  functions.

\begin{remark}
  \label{rem-mplabuse}
  {\em   
    With some abuse  of notation,  we will  refer to any critical point of the log pseudo-likelihood
    function as an MPL estimate.  This  will  have no material consequence
    since in  the models we consider,  we will prove that with
    high probability, the log pseudo-likelihood function is asymptotically strictly concave,  thus showing that
    as  $N \rightarrow \infty$, the MPL estimate is  with high probability the unique  critical point  of the log pseudo-likelihood function. 
  }
\end{remark}

\subsection{Estimation in the Hardcore Model}
\label{pl_estimate}

Let $G_N=(V(G_N), E(G_N))$ be a finite simple graph on $N$ vertices, with $V(G_N)=[N]:=\{1, 2, \ldots, N\}$. Denote the adjacency matrix of $G_N$ by $A(G_N) = ((a_{uv}(G_N)))_{1 \leq u, v \leq N}$. For $u \in V(G_N)$, let $\cN_{G_N}(u)$ be the neighborhood of $u$ in $G_N$, that is, $\{v: (u, v) \in E(G_N)\}$, and recall that $d_u(G_N) = |\cN_{G_N}(u)|$ denotes the degree of vertex $u$ in $G_N$.  Throughout we will assume that $G_N$ has no isolated vertex, that is, the minimum degree of any vertex in  $G_N$ is at least 1, for all $N \geq 1$. A set $S \subset V(G_N)$ is said to be an {\it independent set} in $G_N$, if no two vertices in $S$ are adjacent in $G_N$.  The {\it hardcore model} on $G_N$ with {\it fugacity parameter} $\lambda > 0$, is a probability distribution over the collection of independent sets of $G_N$, given by 
\begin{align}\label{eq:lambda_I}
\P^N_\lambda(\vec \sigma) & :=\frac{\lambda^{\sum_{u=1}^N \sigma_u} \prod_{(u, v) \in E(G_N)} \bm 1\{\sigma_u+\sigma_v \leq 1\}}{Z_{G_N}(\lambda)} \nonumber \\ 
& = \frac{\lambda^{\sum_{u=1}^N \sigma_u} \prod_{(u, v) \in E(G_N)} (1- \sigma_u \sigma_v ) }{Z_{G_N}(\lambda)} ,
\end{align}
where $\vec \sigma =(\sigma_1, \sigma_2, \ldots, \sigma_N) \in \{0, 1\}^N$ and $Z_{G_N}(\lambda)$ is the normalization  constant (also referred to as the {\it partition function}) that is determined by the condition $\sum_{\vec \tau\in \{0, 1\}^N}\P^N_\lambda(\vec \sigma= \vec \tau)=1$. Note that $\bm 1\{\sigma_u+\sigma_v \leq 1\}= 0$ if and only if $\sigma_u=\sigma_v=1$, which implies the constraint $\prod_{(u, v) \in E(G_N)} \bm 1\{\sigma_u+\sigma_v \leq 1\}= 1$ is satisfied if  and only if  $\{w: \sigma_w=1\}$ forms an independent set in $G_N$.  Defining $\log 0 := - \infty$ and $e^{-\infty} := 0$, the family of measures $\P^N_\lambda, \lambda > 0,$ in \eqref{eq:lambda_I} can  equivalently be parameterized in terms of $\beta = \log \lambda \in \R$ and
written in exponential form as follows:    
\begin{align}\label{eq:pmf_beta}
\P^N_\beta(\vec \sigma) &=\exp\left\{\beta \sum_{u=1}^N \sigma_u + \log \newC_{G_N}(\vec \sigma) - F_{G_N}(\beta)\right\},
\end{align} 
where the functional $\newC_{G_N}$ that captures the model constraints is given by 
  \begin{align}
    \label{eq:CGn}
    \newC_{G_N}(\vec \sigma) := \prod_{(u, v) \in E(G_N)} (1- \sigma_u \sigma_v ),
  \end{align} 
and $F_{G_N}(\beta) = \log Z_{G_N}(e^\beta)$ is the {\it log-partition function}, that is,
\begin{equation}
F_{G_N}(\beta):=\log\left\{\sum_{\vec \tau\in \{0, 1\}^N}e^{\beta \sum_{u=1}^N \tau_u +  \log \newC_{G_N}(\vec \sigma) }\right\}. 
\label{eq:FGn}
\end{equation}
The  hardcore model arises in diverse fields such as combinatorics, statistical physics, and telecommunication networks. This includes, among others, the study of random independent sets of a graph \cite{bw_bethe,galvin_kahn}, the study of gas molecules on a lattice \cite{exactly_solvable}, and in the analysis of multicasting in telecommunication networks \cite{fk_networks_I,fk_networks_II,mrf_networks}.

In this paper, we first consider the problem of estimating the parameter $\beta$ given a network $G_N$ and a single sample $\vec \sigma$ from the model \eqref{eq:pmf_beta}. As mentioned earlier, it is computationally intractable to use the ML method for estimating $\beta$ because the log-partition function $F_{G_N}(\beta)$ in \eqref{eq:FGn} is generally not computable, analytically or numerically. Even though there are various methods for computing approximate ML estimates  \cite{geyer_thompson}, very little is known about the number of steps required for convergence. Moreover, even if one assumes that the ML estimate has been approximated somehow, general conditions that guarantee consistency of the ML estimate in such discrete exponential families on large combinatorial spaces are not available.

To bypass these obstacles, we will use the maximum pseudo-likelihood (MPL) estimator of
Definition \ref{def:MPL}, which provides a way to conveniently approximate the joint distribution of $\vec \sigma \sim \P^N_\beta$ that avoids calculations with the normalizing constant. We begin by showing that the MPL estimate for the hardcore model takes a fairly explicit form.  To this end,
  fix a finite graph $G_N$ and
  for any allowable hard-core configuration $\vec \sigma \in \{0,1\}^{V_{G_N}}$,  define
  \begin{equation}
    \label{def-aN}
    \cons(\vec \sigma) :=|\{u: \sigma_u=1\}| \end{equation}
   to be the size of the independent set in  the configuration $\vec \sigma$, and define 
   \begin{equation}
     \label{def-uN}
     \ucons(\vec \sigma) := |\cU^N(\vec \sigma)|, \end{equation} 
   where
   \[ \cU^N(\vec \sigma) := \{ u \in V(G_N): \sigma_u=0 \text{ and }\sigma_v=0
   \,  \forall  v \in \cN_{G_N}(u)\} \]
   is the set of vertices assigned zero such that all its neighbors are also assigned zero.
   Note that $\ucons(\vec \sigma)$, the size of the set $\cU^N(\vec \sigma)$, 
 is the number of vertices left `unconstrained' by $\vec \sigma$ in the
sense that for every $u \in \cU^N(\vec \sigma)$, the new spin configuration obtained by
changing the value of $\sigma_u$ to 1 and keeping the other coordinates unchanged, also forms an  independent set in $G_N$.  
Also, for $b \in  \R$,  define 
\begin{align}
 \label{eq:mple_solution}
L_{\vec \sigma}^N (b) & :=\frac{1}{N} \sum_{u=1}^N \left\{ \sigma_u -  \frac{ e^{b + \sum_{v \ne u}  a_{uv}(G_N) \log  (1- \sigma_v)} }{ e^{b + \sum_{v \ne u}  a_{uv}(G_N) \log  (1- \sigma_v)}+ 1 } \right\}. 
\end{align}

\begin{ppn}
  \label{ppn-hardcore}
   Fix $\beta \in \R$, and let $\P^N_{\beta}$  denote the measure
  for the hardcore model with parameter  $\beta$, as defined in \eqref{eq:pmf_beta}. 
  Then for   any sample   $\vec \sigma\sim \P^N_{\beta}$,  $N L_{\vec \sigma}^N(b)$
    is the derivative (with
    respect to $b$)  
    of the log  pseudo-likelihood function. 
  Furthermore, for $b \in \R$, 
  \begin{equation}
       \label{LN-rewrite}
  L_{\vec \sigma}^N (b) = \frac{1}{N} \left( \cons(\vec \sigma) - \frac{e^b}{e^b+1} [ \cons(\vec \sigma)  + \ucons(\vec \sigma)] \right).  
   \end{equation}
and  hence, the   MPL estimate $\hat{\beta}^N = \hat{\beta}^N (\vec \sigma)$ takes the explicit form
  \begin{equation}\label{eq:beta_estimate}
    \estimate =\log \left(\frac{\cons(\vec \sigma)}{\ucons(\vec \sigma)}\right), 
       \end{equation} 
 \end{ppn}   
\begin{proof}  
  To compute the MPL estimate,
  fix $\beta \in \R$ and consider $\vec \sigma\sim \P^N_{\beta}$. Then from  \eqref{eq:pmf_beta}, the conditional probability simplifies to 
\begin{align}
  \label{cond-prob}
\P^N_\beta(\sigma_u|(\sigma_v)_{v \ne u})=\frac{e^{\beta \sigma_u+\sum_{v \ne u} a_{uv}(G_N) \log (1- \sigma_u\sigma_v)}}{e^{\beta +\sum_{v \ne u} a_{uv}(G_N) \log (1- \sigma_v)}+1}. 
\end{align}
Therefore, the MPL estimate of $\beta$ for the hardcore model, as defined in \eqref{eq:pmf_beta}, 
is obtained by considering the critical points (see Remark \ref{rem-mplabuse}), 
of the normalized log pseudo-likelihood function given below:  
\begin{align}\label{eq:pl_beta}
& \frac{1}{N} \log \prod_{u=1}^N \P^N_b(\sigma_u|(\sigma_v)_{v \ne u})  \nonumber \\ 
& = \frac{1}{N} \sum_{u=1}^N \Bigg\{ b\sigma_u+\sum_{v \ne u}  a_{uv}(G_N) \log  (1- \sigma_u\sigma_v) -  \log\left(e^{b +\sum_{v \ne u} a_{uv}(G_N) \log (1- \sigma_v)}+1\right) \Bigg\}. 
\end{align} 
The derivative  (with respect to $b$) of  the right-hand side of  \eqref{eq:pl_beta}  
is easily seen to coincide with the expression for    $L_{\vec \sigma}^N (b)$ in \eqref{eq:mple_solution},
thus proving the first assertion of the proposition.

Next, note that $\sum_{v \ne u}  a_{uv}(G_N) \log  (1- \sigma_v)=0$ if $\sigma_v=0$ for all  $v \in \cN_{G_N}(u)$. Otherwise, $\sum_{v \ne u}  a_{uv}(G_N) \log  (1- \sigma_v)=-\infty$. Thus, we have 
  \begin{align*}
     L_{\vec \sigma}^N (b) =    \frac{1}{N} \Bigg\{ |\{u: \sigma_u=1\}| - \frac{e^b}{e^b+1} |\{ u \in V(G_N): \sigma_v=0 \text{ for all }  v \in \cN_{G_N}(u)\}| \Bigg\}
  \end{align*}
  Now, since $\sigma_u = 1$ automatically implies $\sigma_v = 0$ for all $v \in \cN_{G_N}(u)$, 
  we  obtain \eqref{LN-rewrite}. Combining this with \eqref{eq:mple_solution}, we see that any 
  solution of $L_{\vec \sigma}^N (b)= 0$  satisfies  
   \begin{equation}
     \label{eq:mple_independent}
     \frac{e^{b}}{e^{b} + 1}  =
     \frac{\cons(\vec \sigma)}{\cons(\vec \sigma) + \ucons(\vec \sigma)}, 
   \end{equation}
   from  which it is straightforward to deduce the form 
   \eqref{eq:beta_estimate} for the MPL estimate (keeping in mind  Remark \ref{rem-mplabuse}). 
   \end{proof}

\begin{remark}
  {\em Given \eqref{eq:beta_estimate} and the fact that
  $\P_\beta^N$  is defined in terms of $\P_\lambda^N$ via the reparameterization relation $\lambda = \log \beta$, it immediately
  follows that  the MPL estimate  $\hat \lambda^N$ for the parameter  $\lambda$   of  $\P_\lambda^N$ in \eqref{eq:lambda_I}
  also has an explicit form, now given by
  \begin{equation}\label{eq:lambda_estimate}
    \hat \lambda^N (\vec \sigma) =\frac{\cons(\vec \sigma)}{\ucons(\vec \sigma)}.
  \end{equation}
  }
\end{remark}

Having computed the MPL estimate $\estimate$ for $\beta$ in the hardcore model, it is natural to ask whether it is consistent and, if so, what is the rate of consistency? Note that unlike in classical statistical estimation problems where one has access  to multiple independent samples from the underlying model, our sample consists of a single spin realization from \eqref{eq:pmf_beta}, which has dependent coordinates. This renders standard techniques for proving consistency and central limit theorems inapplicable, and presents a unique challenge where even the most basic characteristics, like correlations between spins at different coordinates are, in general, totally intractable for general graphs. We bypass these hurdles by using the method of exchangeable pairs introduced in \cite{chatterjee_thesis,chatterjee}, to show that the simple MPL estimate of $\beta$ obtained in \eqref{eq:beta_estimate} converges to the true parameter at rate $O(1/\sqrt N)$ whenever $G_N$ has bounded average degree. Recall that $d_u(G_N)$ denotes  the degree of the vertex $u$ in $G_N$.

\begin{thm}\label{thm:beta_estimate} Fix $\beta \in \R$. For $N \in \N$,
    suppose
  $G_N$ is a graph on $N$ vertices,   and the  sequence  
    $\{G_N\}_{N \in \N}$  has uniformly bounded average degree, that is,
  \begin{equation}
    \label{deg-cond}
    \sup_{N \geq 1} \frac{1}{N} \sum_{u=1}^N d_u(G_N) < \infty.
    \end{equation}
  Then, given a sample $\vec \sigma \sim \P^N_\beta$ from the hardcore model on $G_N$ as in \eqref{eq:pmf_beta}, the MPL estimate $\estimate $, as derived in \eqref{eq:beta_estimate}, is $\sqrt N$-consistent for $\beta$, that is, 
  $$
  \lim_{M \rightarrow \infty} \limsup_{N \rightarrow \infty} \P^N_\beta(\sqrt N|\estimate -\beta| > M)= 0.$$ 
\end{thm}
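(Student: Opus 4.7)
The plan is to use the explicit formula for $\estimate$ from Proposition \ref{ppn-hardcore} together with the method of exchangeable pairs of \cite{chatterjee_thesis,chatterjee} to reduce $\sqrt N$-consistency to two ingredients: concentration of the normalized score $L^N_{\vec\sigma}(\beta)$ at rate $1/\sqrt N$, and a uniform-in-$N$ positive lower bound (with probability tending to one) on the magnitude of the second derivative of the log pseudo-likelihood. Since $L^N_{\vec\sigma}(\estimate)=0$ and \eqref{LN-rewrite} gives $\partial_b L^N_{\vec\sigma}(b) = -p_b(1-p_b)(\cons+\ucons)/N$ with $p_b:=e^b/(e^b+1)$, the mean value theorem yields
\[
|\estimate-\beta|\;\leq\;\frac{|L^N_{\vec\sigma}(\beta)|}{\bigl(\inf_{b^\ast} p_{b^\ast}(1-p_{b^\ast})\bigr)\,(\cons+\ucons)/N},
\]
with $b^\ast$ ranging over the interval between $\beta$ and $\estimate$. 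Thus the theorem follows once I establish (i) $L^N_{\vec\sigma}(\beta)=O_P(1/\sqrt N)$ and (ii) $(\cons+\ucons)/N\geq c>0$ with probability tending to one; a short bootstrap---first combining (i) and (ii) to deduce $\estimate\to\beta$ in probability, then reinserting into the mean value bound---localizes $b^\ast$ to a bounded window around $\beta$, keeping $p_{b^\ast}(1-p_{b^\ast})$ bounded away from zero.

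For (i), I use the Glauber exchangeable pair $(\vec\sigma,\vec\sigma^{(I)})$: pick $I$ uniformly on $[N]$ independently of $\vec\sigma\sim\P^N_\beta$, and let $\vec\sigma^{(I)}$ equal $\vec\sigma$ except with $\sigma_I$ resampled from \eqref{cond-prob}. With $F(\vec\sigma):=\cons(\vec\sigma)=\sum_u\sigma_u$, a direct computation using \eqref{cond-prob} shows $\mathbb{E}[F(\vec\sigma)-F(\vec\sigma^{(I)})\mid\vec\sigma]=L^N_{\vec\sigma}(\beta)$, and the exchangeable-pair variance identity then gives
\[
\mathrm{Var}\bigl(L^N_{\vec\sigma}(\beta)\bigr)\;=\;\tfrac12\,\mathbb{E}\bigl[(F(\vec\sigma)-F(\vec\sigma^{(I)}))(L^N_{\vec\sigma}(\beta)-L^N_{\vec\sigma^{(I)}}(\beta))\bigr].
\]
The bound $|F(\vec\sigma)-F(\vec\sigma^{(I)})|\leq 1$ is immediate, and since a Glauber flip at coordinate $I$ perturbs only the summands of $L^N_\cdot(\beta)$ indexed by $\{I\}\cup\cN_{G_N}(I)$, one gets the cheap estimate $|L^N_{\vec\sigma}(\beta)-L^N_{\vec\sigma^{(I)}}(\beta)|\leq C_\beta(1+d_I(G_N))/N$. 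Averaging over $I$ and invoking \eqref{deg-cond} yields $\mathrm{Var}(L^N_{\vec\sigma}(\beta))=O(1/N)$, and Chebyshev completes (i).

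For (ii), a linear lower bound on $\mathbb{E}[\cons]$ comes from the hardcore ratio formula $\P^N_\beta(\sigma_u=1)=e^\beta Z_{G_N\setminus(\{u\}\cup\cN_{G_N}(u))}(e^\beta)/Z_{G_N}(e^\beta)$ combined with the crude expansion $Z_{G_N}\leq\bigl((1+e^\beta)^{d_u(G_N)}+e^\beta\bigr)Z_{G_N\setminus(\{u\}\cup\cN_{G_N}(u))}$ (obtained by decomposing over whether $u$ lies in the independent set), which yields $\mathbb{E}[\sigma_u]\geq e^\beta/\bigl(e^\beta+(1+e^\beta)^{d_u(G_N)}\bigr)$; since $x\mapsto e^\beta/(e^\beta+(1+e^\beta)^x)$ is convex on $[1,\infty)$, Jensen on the empirical degree distribution together with \eqref{deg-cond} produces $\mathbb{E}[\cons]/N\geq c_0(\beta,D)>0$. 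Summing the score identity $\mathbb{E}[\sigma_u\mid(\sigma_v)_{v\neq u}]=p_\beta\prod_{v\in\cN_{G_N}(u)}(1-\sigma_v)$ over $u$ then upgrades this to $\mathbb{E}[\cons+\ucons]/N\geq c_1(\beta,D)>0$. The principal technical hurdle is to promote this \emph{mean} lower bound to a high-probability one without appealing to spatial mixing (which is unavailable at general $\beta$); I would handle this by a second application of the exchangeable-pair machinery to the functional $\cons+\ucons$, which changes by at most $1+d_I(G_N)$ under a Glauber flip at $I$, yielding a variance estimate analogous to the one in (i) and, via Chebyshev, $(\cons+\ucons)/N\geq c_1/2$ with probability tending to one. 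Merging this with (i) and the bootstrap localization completes the proof.
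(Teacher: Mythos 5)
Your overall architecture coincides with the paper's, and most of the steps are sound. Part (i) is essentially the paper's Lemma \ref{lm:2moment} verbatim: the same Glauber pair, the same antisymmetry identity, and the same perturbation bound of order $(1+d_I(G_N))/N$ averaged against \eqref{deg-cond}. Your lower bound $\P^N_\beta(\sigma_u=1)\geq e^\beta/\bigl(e^\beta+(1+e^\beta)^{d_u(G_N)}\bigr)$ is the same decomposition over whether $u$ lies in the independent set as \eqref{eq:Z_bound}--\eqref{eq:sigmau}, and your Jensen/convexity step is a clean substitute for the paper's Lemma \ref{lm:M}. The genuine gap is the final step of (ii). You cannot obtain a variance bound for $\cons(\vec\sigma)+\ucons(\vec\sigma)$ ``by a second application of the exchangeable-pair machinery.'' The identity exploited in (i), namely $\E_\beta^N[f(\vec\sigma)^2]=\tfrac12\E_\beta^N[(f(\vec\sigma)-f(\vec\sigma'))(F(\vec\sigma)-F(\vec\sigma'))]$, is an algebraic consequence of exchangeability and antisymmetry that controls the \emph{conditional expectation} $f(\vec\sigma)=\E_\beta^N[F(\vec\sigma)-F(\vec\sigma')\mid\vec\sigma]$, not the statistic $F$ itself; it applies to $L^N_{\vec\sigma}(\beta)$ only because $L^N_{\vec\sigma}(\beta)$ happens to be of that form. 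Applied to $F=\cons+\ucons$, the method would only show that $F(\vec\sigma)-\E_\beta^N[F(\vec\sigma')\mid\vec\sigma]$ is small, i.e.\ that $F$ barely moves in one Glauber step, which is vacuous. Converting the bounded-difference estimate $|F(\vec\sigma)-F(\vec\sigma^{(I)})|\leq 1+d_I(G_N)$ into $\Var(F)\lesssim \sum_I (1+d_I(G_N))^2$ requires a Poincar\'e inequality for the Glauber dynamics, i.e.\ a relaxation-time bound, which is precisely the mixing input you correctly observe is unavailable at general $\beta$. As written, the high-probability lower bound on $(\cons+\ucons)/N$ does not follow.

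The repair is the one the paper uses in Lemma \ref{lm:lp}: only a \emph{one-sided} tail bound is needed, and $\cons(\vec\sigma)$ is the sufficient statistic of the exponential family \eqref{eq:pmf_beta}, so its Laplace transform is expressible through the log-partition function. Concretely, for $\delta,\varepsilon>0$,
\begin{equation*}
\P^N_\beta(\cons(\vec\sigma)<\varepsilon N)\;\leq\; e^{\delta\varepsilon N}\,\E^N_\beta\bigl[e^{-\delta \cons(\vec\sigma)}\bigr]\;=\;e^{\delta\varepsilon N+F_{G_N}(\beta-\delta)-F_{G_N}(\beta)},
\end{equation*}
and since $F_{G_N}'$ is nondecreasing, $F_{G_N}(\beta)-F_{G_N}(\beta-\delta)\geq\delta F_{G_N}'(\beta-\delta)=\delta\,\E^N_{\beta-\delta}[\cons(\vec\sigma)]$. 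Your mean lower bound holds uniformly in the parameter, so applying it at the tilted value $\beta-\delta$ makes the exponent at most $\delta N(\varepsilon-c_0(\beta-\delta,D))$, which is negative and linear in $N$ once $\varepsilon$ is small. Since $\ucons\geq 0$, this gives $(\cons+\ucons)/N\geq\varepsilon$ with probability tending to one, which is exactly the event on which your mean-value/bootstrap argument closes. With that single substitution, the rest of your proposal is correct.
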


\begin{remark} {\em The result above shows that the MPL estimate is $\sqrt N$-consistent in the hardcore model whenever $\sum_{u=1}^N d_u (G_N) = 2 |E(G_N)|=O(N)$, that is, the number of edges in the graph $G_N$ is asymptotically at most linear in the number of vertices. In particular, this includes graph sequences with uniformly bounded maximum degree and the sparse Erd\H os-R\'enyi random graph sequence $\{{\mathcal G}(N, c/N)\}_{N \in \N}$, for some fixed $c > 0$.} 
\end{remark}

  The proof of Theorem \ref{thm:beta_estimate}, which is given in Section \ref{sec:pfbeta_estimate}, involves the following two steps: 
\begin{itemize}

\item In the first step we use the method of exchangeable pairs to show that the derivative of the normalized log pseudo-likelihood \eqref{eq:mple_solution} is concentrated around zero at the true model parameter, that is, for any $\beta \in \R$, $\E_\beta(L_{\vec \sigma}^N(\beta) ^2) = O(1/N)$ (see Lemma \ref{lm:2moment} for details).

\item Then we show that the log pseudo-likelihood is asymptotically strongly concave, that is, its double derivative is strictly negative for all sufficiently large $N$  when the degree condition
  \eqref{deg-cond} holds. Here, a direct calculation shows that the second derivative of the negative log pseudo-likelihood is proportional to $\cons(\vec \sigma) + \ucons(\vec\sigma) $. Therefore, to show strong concavity it suffices to show that $\cons(\vec \sigma)$ is $\Theta(N)$ with high probability, which, in turn, is proved by using \eqref{deg-cond}  to show that the limiting normalized log partition function  is nowhere flat in the limit (see Lemma \ref{lm:lp}). 
\end{itemize}

\subsection{Estimation in General $H$-Coloring Models}

A realization $\vec \sigma$ from the hardcore model \eqref{eq:lambda_I} can be viewed as a 2-coloring of the vertices
of the graph $G_N$ (with colors labeled 0 and 1), where no two vertices with color 1 are adjacent.
This can be generalized to more than two colors and arbitrary adjacency restrictions between the colors
encoded by a constraint graph $H$. To this end, given a graph $H=(V(H), E(H))$, with vertex set $V(H)=\{0, 1, \ldots, q-1\}$
representing colors (or spins), and possible self loops (but no multiple edges), an $H$-{\it coloring} of $G_N$
is an assignment of colors $\{0, 1, \ldots, q-1\}$ to the vertices of $G_N$ such that adjacent vertices of $G_N$
receive adjacent colors in $H$. More formally, denote the collection of $H$-colorings of $G_N$ by 
\begin{align*} 
\Omega_{G_N}(H) & :=\Big\{\vec \sigma = (\sigma_1, \sigma_2, \ldots, \sigma_N) \in \{ 0, 1, \ldots, q-1\}^N: \nonumber \\ 
& ~~~~~~~~ (u, v) \in E(G_N) \text{ implies }(\sigma_u, \sigma_v) \in E(H) \Big\}.
\end{align*}
The $H$-{\it coloring model} on $G_N$ with {\it activity parameter} $\bm \lambda=(\lambda_1, \lambda_2, \ldots, \lambda_{q-1})' \in \R^{q-1}$ is a probability distribution on $\Omega_{G_N}(H)$ given by,  
$$\P^N_{\bm \lambda}(\vec \sigma) \propto \prod_{u =1}^N \lambda_{\sigma_u}, \quad \text{for } \vec \sigma=(\sigma_1, \sigma_2, \ldots, \sigma_n) \in \Omega_{G_N}(H),$$ 
where $\lambda_0 :=1$. This model was originally introduced in \cite{bw,bw_phase_transitions} and has since  found many interesting applications in statistical physics, combinatorics, and computer science (cf. \cite{bhw,complexity_gh,gmrt,nonmonotone_network,mrf_networks} and references therein).  To write this model more explicitly, for $\vec \sigma \in \Omega_{G_N}(H)$, $u  = 1, \ldots, N$ and $s = 0,  \ldots, q-1$, define
\begin{equation}
  \label{chi}
  \chi_{\vec \sigma, u}(s) := \bm 1\{\sigma_u=s\}
\end{equation} 
and let  
\begin{align}\label{eq:count_sigma}
\cons_s(\vec \sigma) :=\sum_{u=1}^N \chi_{\vec \sigma, u}(s) = \sum_{u=1}^N \bm 1\{\sigma_u=s\}, 
\end{align} 
denote the number of vertices in $G_N$ with color $s$. Denote by $\overline H$ the complement of the graph $H$, that is, $\overline H = (V(\overline H), E(\overline H))$, where $V(\overline H) = V(H)$ and $E(H) = (V(H) \times V(H)) \backslash E(H) $. Denote the adjacency matrix of $H$ by $A(H)=((a_{s t}(H)))_{0 \leq s, t \leq q-1}$.  Then the $H$-coloring model on $G_N$ with parameter 
$\bm \lambda$ is defined as 
\begin{align}\label{eq:model_H}
\P^N_{\bm \lambda}(\vec \sigma)& :=\frac{\prod_{s=1}^{q-1}  \lambda_s^{\cons_s(\vec \sigma)}  \newC_{G_N, H}(\vec \sigma)  }{Z_{G_N}(\bm \lambda)}, 
\end{align} 
where for $\vec \sigma =(\sigma_1, \sigma_2, \ldots, \sigma_N) \in \{0, 1\}^N$, 
\begin{align*}
\newC_{G_N, H}(\vec \sigma) := \prod_{(s, t) \in E(\overline{H})} \prod_{1 \leq u \ne v \leq  N} a_{uv}(G_N) (1-\chi_{\vec \sigma, u}(s) \chi_{\vec \sigma, v}(t)), 
\end{align*} 
and $Z_{G_N}(\bm \lambda)$ is the normalization constant (partition function), which is determined by the condition $\sum_{\vec \tau \in \{0,1, \ldots, q-1\}^N} \P_{\bm \lambda}^N (\vec \tau)= 1$. Setting $\beta_s :=\log \lambda_s$ for $s  = 1, \ldots, q-1$,  and
  $\bm\beta = (\beta_1, \ldots, \beta_s)$,  
the family of distributions in \eqref{eq:model_H} can be reparameterized and written in
exponential form as 
\begin{align}\label{eq:model_H_II}
\P^N_{\bm \beta}(\vec \sigma) = \exp\left\{\sum_{s=1}^{q-1}  \beta_s \cons_s(\vec \sigma) + \log  \newC_{G_N, H}(\vec \sigma)    - F_{G_N}(\bm \beta)\right\} , 
\end{align} 
where  $F_{G_N}(\bm \beta) := \log Z_{G_N}( e^{\bm \beta})$, with the exponential in ${\bm \lambda}  = e^{\bm \beta}$ being taken  coordinatewise. Hereafter, we will assume that the model \eqref{eq:model_H_II} has at least one hard constraint, that is, the complement of the constraint graph $\overline H$ is not empty. In other words, we assume $H \ne K_q^+$, where $K_q^+$ is the complete graph on $V(H)$ with a self-loop at every vertex.

\begin{figure*}[!h]
\centering
\begin{minipage}[c]{1.0\textwidth}
\centering
\includegraphics[width=5.25in]
    {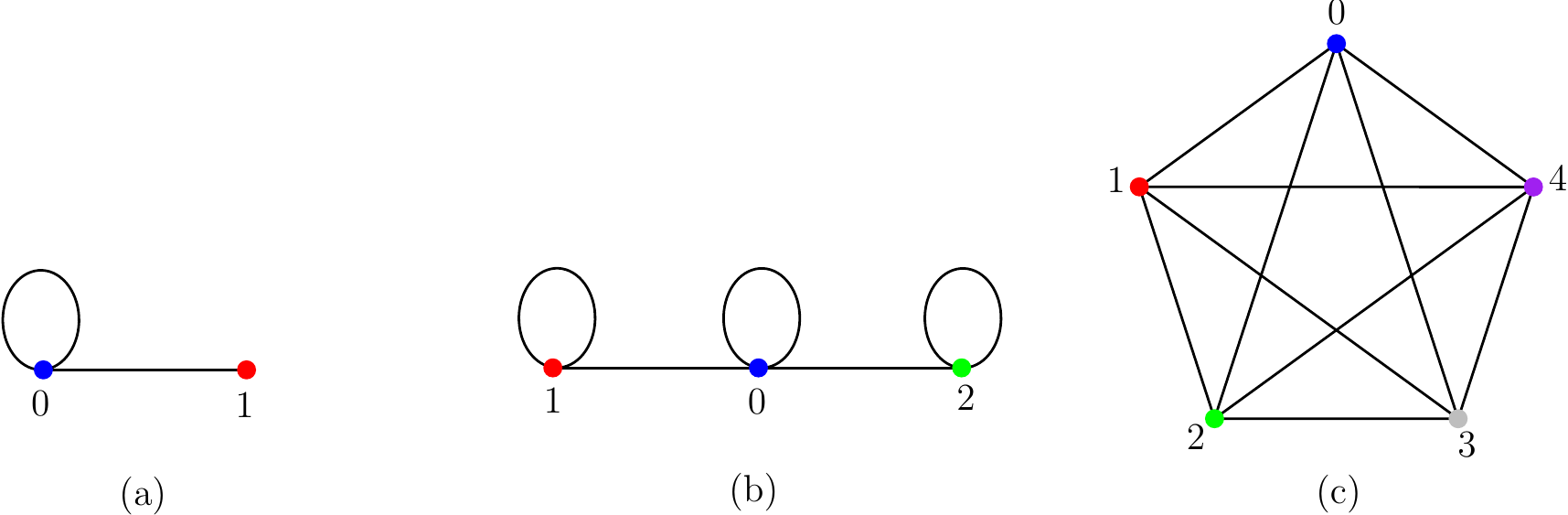}\\
\end{minipage}
\caption{\small{The constraint graphs for the different $H$-coloring models: (a) the hardcore model, (b) the Widom-Rowlinson model, and (c) the $q=5$ proper coloring model.}}
\label{fig:graphs}
\end{figure*}

Many well-known models in statistical physics and combinatorics can be viewed as an $H$-coloring model for some choice of $H$. This includes, among others, the following: 
\begin{itemize}

\item {\it Hardcore and Multi-state Hardcore Models}: The hardcore model \eqref{eq:lambda_I} corresponds to the $H$-coloring model with $H$ equal to the graph on two vertices (labeled 0 and 1) with an edge connecting 0 and 1 and a self-loop at 0,
  as shown in Figure \ref{fig:graphs}(a). Note that two neighboring vertices of $G_N$ are forbidden to have 1 in an $H$-coloring of $G_N$, because there is no self-loop at 1. The multi-state hardcore model \cite{gmrt,two_sided,mrf_networks} is a generalization of this model with $q+1$ states, for $q \geq 1$,  where the constraint graph $H=(V(H), E(H))$, with $V(H)=\{0, 1, \ldots, q\}$, has the edge $(s, t)$ if and only  if   $s+t \leq q$, for $s,t \in V(H)$.

\item {\it Widom-Rowlinson Model}: This is an $H$-coloring model with 3 colors, where the constraint graph $H$ is as shown in Figure \ref{fig:graphs}(b). In this $H$-coloring of $G_N$ a vertex colored 1 is forbidden to be adjacent to a vertex colored 2, since there is no edge between 1 and 2 in $H$. This was introduced in \cite{wr}, as
a model for two types of interacting particles with a hard-core exclusion between particles of
different types (colors 1 and 2 represent particles of each type and the color 0 represents an
unoccupied site), and has been well studied on lattices \cite{wr_discrete}, regular trees \cite{bhw}, and also in the continuum \cite{wr_geometric,wr_phase_transition}.

\item {\it Proper $q$-Coloring Model}: Here, $H=K_q$ is the complete graph on $q$ vertices (as illustrated in Figure \ref{fig:graphs}(c)), that is, $\Omega_{G_N}(H)$ is the collection of all proper colorings of $G_N$ with $q\geq 2$ colors. This is one of the canonical examples of spin systems with hard-constraints that has been widely studied and has found many applications (cf.~\cite{coloring_testing,coloring_structure,bw,bw_phase_transitions,complexity_gh,graph_homomorhism_book,toft_book} and  references therein). 

\end{itemize}

As in the case of the hardcore model,  in Proposition
  \ref{ppn:Hcoloring} below, we first
  establish  a useful characterization of the MPL estimate
  of $\bm \beta$ given a sample  from the $H$-coloring model. 
  First, fix a constraint graph $H$, set $q := |V(H)|$ and a graph $G_N$.
  For any allowable configuration $\vec \sigma \in \Omega_{G_N}(H)$ for the
$H$-coloring model,  consider the mapping
$Q_{G_N, H, \vec \sigma}: V(G_N) \times V(H) \mapsto \{0,-\infty\}$ given 
by 
\begin{align}
  \label{eq:Q_sigma0} 
Q_{G_N, H, \vec \sigma}(u, s) := \sum_{v \in V(G_N)\backslash\{u\}}  \sum_{t=0}^{q-1}  a_{st}(\overline H) a_{uv}(G_N) \log (1- \chi_{\vec \sigma, v}(t)), 
\end{align}
with $\chi_{\vec \sigma, v}$ as defined in \eqref{chi},  and  let 
\begin{align}\label{eq:LNb}
 L_{\vec \sigma}^N(\bm b) := (L^{(N,1)}_{\vec \sigma}(\bm b),  \ldots, L^{(N,q-1)}_{\vec \sigma}(\bm b))', \, \bm b \in \R^q,   
\end{align} 
where, for $r = 1, \ldots, q-1$, $L_{\vec \sigma}^{(r)} = L_{\vec \sigma}^{(N,r)}$ is given by  
\begin{align}
  \label{eq:mple_equation_HI}
L^{(r)}_{\vec \sigma}(\bm b)   := \frac{1}{N} \cons_r(\vec \sigma) - \frac{1}{N} \sum_{u=1}^N \left\{ \frac{  e^{b_r} \bm 1\{Q_{G_N, H, \vec \sigma}(u, r)=0\} }{  \sum_{s=0}^{q-1} e^{b_s} \bm 1\{Q_{G_N, H, \vec \sigma}(u, s)=0\}} \right\},
  \end{align}
with $\cons_r$ the number of vertices with color $r$ in the configuration $\vec \sigma$,
as defined in \eqref{eq:count_sigma}.

Note that $Q_{G_N, H, \vec \sigma}(u, s)  = 0$, if $ \chi_{\vec \sigma, v}(t)= \bm 1\{\sigma_v = t\} =0$, for all $t \in \cN_{\overline H}(s)$ and all $v \in \cN_{G_N}(u)$, and $Q_{G_N, H, \vec \sigma} = -\infty$ otherwise, 
where recall that for any graph $G$, $\cN_G(v)$ denotes the neighborhood of the vertex $v$ in $G$.
In other words, 
\begin{align}\label{eq:Q_sigma}
Q_{G_N, H, \vec \sigma}(u, s) = 
\left\{
\begin{array}{cc}
0  &   \text{ if }  \vec \sigma_{\cN_{G_N}(u)} \subseteq \cN_{H}(s),  \\
- \infty  &    \text{ otherwise, }  
\end{array}
\right.
\end{align}
where $\vec \sigma_A= \{\sigma_a : a \in A\}$.  Thus,   $Q_{G_N, H, \vec \sigma}(u, s)$ is zero if the set of  colors assigned by the configuration $\vec \sigma$ to the vertices in the neighborhood of the vertex $u$ in $G_N$ is a subset of the neighborhood of the color $s$ in the constraint graph $H$ and $Q_{G_N, H, \vec \sigma}(u, s)$ is $-\infty$ otherwise. In other words,   $Q_{G_N, H, \vec \sigma}(u, s)$ is zero  if assigning $u$ to have the color $s$, while leaving the colors  of all other coordinates the same, creates a valid configuration for the $H$-coloring model, and $Q_{G_N, H, \vec \sigma}(u, s)$ is $-\infty$ otherwise.

We  now state the characterization of the MPL estimate for the $H$-coloring model, keeping in mind  Remark \ref{rem-mplabuse}. 

\begin{ppn}
  \label{ppn:Hcoloring}
  Fix $\bm \beta \in \R^q$, and let $\P^N_{\bm \beta}$ be the $H$-coloring measure with
  parameter $\bm \beta$ 
  defined in  \eqref{eq:model_H_II}. Then  for any sample $\vec \sigma \sim \P^N_{\bm \beta}$, any MPL estimate  of $\bm \beta$ must  be a solution of the equation 
 \begin{equation}
  \label{eq:mple_equation_HII} 
   L_{\vec \sigma}^N(\bm b) =\bm 0,
  \end{equation}
 with $\bm 0$ denoting the zero vector of length $q-1$ and $L_{\vec \sigma}^N(\cdot)$ as defined in \eqref{eq:LNb}. 
\end{ppn}

We will show later in Section \ref{sec:pf_gradient_H} that the function $L_{\vec \sigma}^N$  is concave. In fact, $L_{\vec \sigma}^N$  will turn out to be strictly concave with high probability, and hence, \eqref{eq:mple_equation_HII} will have a unique solution with high probability.

We now address  consistency of MPL estimates.  Specifically, 
the following result identifies a sufficient condition under which any sequence of MPL  estimates 
$\{\hat {\bm \beta}^N(\vec \sigma)\}_{N \in \N}$ is $\sqrt N$-consistent for general $H$-coloring models on graphs with bounded average degree.  To state the result, define 
\begin{align}\label{eq:usigma}
\ucons_r(\vec \sigma) & := \sum_{u=1}^{N} \bm 1\{  \vec \sigma_{\cN_{G_N}(u)} \subseteq \cN_{H}(r), 
\end{align}
where recall that $\vec \sigma_{\cN_{G_N}(u)} : = (\sigma_v)_{v \in \cN_{G_N}(u)}$.

\begin{thm}\label{thm:beta_H} Fix $q \geq 2$, $\bm \beta = (\beta_1, \beta_2, \ldots, \beta_{q-1})' \in \R^{q-1}$, and a connected constraint graph $H \ne K_q^+$. For $N \in \N$, let $G_N$  be a graph on $N$ vertices with  $|\Omega_{G_N}(H)| \geq 1$, and
    let the sequence $\{G_N\}_{N \in \N}$ have uniformly bounded average degree, that is,
  suppose \eqref{deg-cond} holds. Moreover, suppose $\vec \sigma \sim \P^N_{\bm \beta}$ is a sample from the $H$-coloring model on $G_N$ as in \eqref{eq:model_H_II} satisfying the following condition: For all $1 \leq r \leq q-1$,   
\begin{align}\label{eq:sigma_neighborhood}
\lim_{\varepsilon \rightarrow 0} \liminf_{N \rightarrow \infty}\P^N_{\bm \beta}\left(\ucons_r(\vec \sigma) > \varepsilon N   \right) = 1. 
\end{align}
Then any sequence of MPL estimates $\{\hat {\bm \beta}^N(\vec \sigma)\}_{N \in \N}$ is $\sqrt N$-consistent for $\bm \beta$, that is,  
$$\lim_{M \rightarrow \infty} \limsup_{N \rightarrow \infty} \P^N_{\bm \beta} (\sqrt N||\hat {\bm \beta}^N(\vec \sigma) - \bm \beta||_2 > M)= 0,$$
where $||\cdot||_2$ denotes the standard Euclidean norm. 
\end{thm}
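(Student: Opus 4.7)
The plan is to mirror the two-step strategy used for the hardcore model: reduce $\sqrt N$-consistency of $\hat{\bm\beta}^N(\vec\sigma)$ to (i) a $1/\sqrt N$-concentration of the pseudo-likelihood gradient $L_{\vec\sigma}^N(\bm\beta)$ at the true parameter, and (ii) an asymptotic strong concavity of the log pseudo-likelihood. By Proposition \ref{ppn:Hcoloring}, any MPL estimate satisfies $L_{\vec\sigma}^N(\hat{\bm\beta}^N) = \bm 0$, so the mean value theorem applied componentwise yields a point $\tilde{\bm\beta}^N$ on the segment from $\bm\beta$ to $\hat{\bm\beta}^N$ with $L_{\vec\sigma}^N(\bm\beta) = -\nabla L_{\vec\sigma}^N(\tilde{\bm\beta}^N)(\hat{\bm\beta}^N - \bm\beta)$. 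Invertibility of $\nabla L_{\vec\sigma}^N(\tilde{\bm\beta}^N)$ then gives
$$\sqrt N\,\|\hat{\bm\beta}^N - \bm\beta\|_2 \;\leq\; \frac{\sqrt N\,\|L_{\vec\sigma}^N(\bm\beta)\|_2}{\lambda_{\min}\!\bigl(-\nabla L_{\vec\sigma}^N(\tilde{\bm\beta}^N)\bigr)},$$
so the theorem reduces to bounding the numerator from above and the denominator from below, each with high $\P^N_{\bm\beta}$-probability.

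For the gradient, a direct computation from \eqref{eq:model_H_II} shows that the conditional law of $\sigma_u$ given $(\sigma_v)_{v\ne u}$ under $\P^N_{\bm\beta}$ is the categorical distribution with weights proportional to $e^{\beta_s}\,\bm 1\{Q_{G_N,H,\vec\sigma}(u,s)=0\}$ (with $\beta_0 := 0$). Comparing with \eqref{eq:mple_equation_HI} identifies each summand of $N L^{(r)}_{\vec\sigma}(\bm\beta)$ as a bounded, conditionally centered random variable. I will adapt the exchangeable-pair construction of Lemma \ref{lm:2moment} to this vector-valued setting: the pair $(\vec\sigma, \vec\sigma')$ is formed by picking a coordinate $u$ uniformly at random and resampling $\sigma_u$ from its local Gibbs conditional. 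Since changing $\sigma_u$ perturbs only $1 + d_u(G_N)$ summands of each $N L^{(r)}_{\vec\sigma}$, the standard antisymmetric-function identity for exchangeable pairs combined with \eqref{deg-cond} yields
$$\E_{\bm\beta}\!\left[\bigl(L^{(r)}_{\vec\sigma}(\bm\beta)\bigr)^2\right] \;=\; O(1/N)$$
for each $r$, and hence $\sqrt N\,\|L_{\vec\sigma}^N(\bm\beta)\|_2$ is bounded in probability by Markov's inequality.

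For the Hessian, differentiating \eqref{eq:mple_equation_HI} gives $-\nabla L_{\vec\sigma}^N(\bm b) = \frac{1}{N}\sum_{u=1}^N \Sigma_u(\bm b)$, where $\Sigma_u(\bm b)$ is the $(q-1)\times(q-1)$ covariance matrix of the indicator vector $(\bm 1\{\sigma_u = r\})_{r=1}^{q-1}$ under the local Gibbs conditional at $\bm b$. Each $\Sigma_u(\bm b)$ is positive semi-definite, so the normalized log pseudo-likelihood is concave; and if at least two colors are available at $u$, then $\Sigma_u(\bm b)$ contributes strictly to certain coordinate directions, with magnitude uniformly bounded below on any compact set $K \ni \bm\beta$. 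Hypothesis \eqref{eq:sigma_neighborhood} forces $\ucons_r(\vec\sigma) \geq \varepsilon N$ with high probability for every $r = 1, \ldots, q-1$; aggregating these contributions across $u$ should yield $\lambda_{\min}\bigl(-\nabla L_{\vec\sigma}^N(\bm b)\bigr) \geq c_K \varepsilon$ uniformly over $\bm b \in K$ with high probability. To confine $\tilde{\bm\beta}^N$ to such a compact neighborhood, I will first deduce ordinary consistency $\hat{\bm\beta}^N \to \bm\beta$ in probability from the gradient bound together with the just-established strict concavity, via the standard argmax argument for concave $M$-estimators.

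The principal technical obstacle is the Hessian lower bound: condition \eqref{eq:sigma_neighborhood} directly constrains only the marginal per-color availability counts $\ucons_r(\vec\sigma)$, whereas $\lambda_{\min}\bigl(\sum_u \Sigma_u(\bm b)\bigr)$ depends on the joint availability pattern across colors and on possible cancellations among the off-diagonal entries of the $\Sigma_u(\bm b)$. Translating per-color availability counts into a uniform spectral lower bound is the main combinatorial step, and is presumably where the assumptions that $H$ is connected and $H \ne K_q^+$ enter. By contrast, the exchangeable-pair step is a largely mechanical extension of the hardcore argument of Lemma \ref{lm:2moment} to the vector-valued setting.
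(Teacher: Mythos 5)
Your outline reproduces the paper's architecture (gradient concentration via exchangeable pairs, plus a spectral lower bound on the negative Hessian), and the gradient step is essentially the paper's Lemma \ref{lm:2moment_H}. But you leave unresolved exactly the step that carries the real content of the proof, and you say so yourself: you do not show how the per-color counts $\ucons_r(\vec \sigma)$ in \eqref{eq:sigma_neighborhood} produce a lower bound on $\lambda_{\mathrm{min}}(-\grad \bm L_{\vec\sigma}(\bm\beta))$. The paper's mechanism is concrete. Writing $\theta_{\vec\sigma}(u,r)$ for the conditional probability that $u$ takes color $r$ given the rest, as in \eqref{def:thetaur}, one has the pointwise bound $\theta_{\vec\sigma}(u,r) \geq \calJ(\bm\beta)\,\bm 1\{u \in \cU^N_r(\vec\sigma)\}$ with $\calJ(\bm\beta) = \min_r e^{\beta_r}/\sum_s e^{\beta_s} > 0$, because the normalizing denominator is at most $\sum_s e^{\beta_s}$. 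Hence the rainbow condition gives $\sum_u \theta_{\vec\sigma}(u,r) \geq \varepsilon_0 N$ for every $r$, with $\varepsilon_0 = \varepsilon\,\calJ(\bm\beta)$, and a Markov-type count then yields linearly many vertices $u$ at which $\theta_{\vec\sigma}(u,r) > \varepsilon_0/2$ simultaneously for all colors. For such $u$ the summand $\mathrm{diag}(\cO_{\vec\sigma}(u)) - \cO_{\vec\sigma}(u)\cO_{\vec\sigma}^{\top}(u)$ is a multinomial covariance matrix whose eigenvalues solve $\sum_{r=1}^{q-1}\theta_{\vec\sigma}(u,r)^2/(\theta_{\vec\sigma}(u,r)-\lambda)=1$, which forces $\lambda > \varepsilon_0/2$ (Lemma \ref{lm:Lepsilon}); since every summand is positive semidefinite, $\lambda_{\mathrm{min}}$ is superadditive over the sum and one gets $\lambda_{\mathrm{min}}(-\grad\bm L_{\vec\sigma}(\bm\beta)) \geq \varepsilon_0^2/4$. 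Your worry about ``cancellations among the off-diagonal entries'' is therefore moot---no cancellation can occur between PSD summands---and the hypotheses that $H$ is connected and $H \neq K_q^+$ play no role in this step; all the work is done by \eqref{eq:sigma_neighborhood}. Without some version of this argument your proposal is an outline of the theorem rather than a proof of it.

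A secondary problem is the final assembly. The mean value theorem does not hold for vector-valued maps with a single intermediate point, so the identity $L^N_{\vec\sigma}(\bm\beta) = -\grad L^N_{\vec\sigma}(\tilde{\bm\beta}^N)(\hat{\bm\beta}^N-\bm\beta)$ is not available as written; one must use the integral form along the segment. More importantly, you propose to confine $\tilde{\bm\beta}^N$ to a compact set by ``first deducing ordinary consistency via the standard argmax argument,'' but that argument itself requires a curvature bound on a region already known to contain $\hat{\bm\beta}^N$, which is what you are trying to establish---the logic is circular as stated. The paper sidesteps this by setting $h_N(t) = (\hat{\bm\beta}^N-\bm\beta)^{\top}\bm L_{\vec\sigma}(\bm\beta_t)$ and integrating $-h_N'(t)$ only over $t \in [0,\min\{1,K/Z_N\}]$ with $Z_N = \|\hat{\bm\beta}^N-\bm\beta\|_2$, so the path never leaves the ball of radius $K$ where the curvature bound holds, while global concavity (each summand PSD) guarantees the discarded portion of the integral is nonnegative. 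This yields $\min\{Z_N,K\} \lesssim 1/(\varepsilon\sqrt N)$ directly, with no prior consistency step.
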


For ease of referencing, we refer to the sufficient condition  \eqref{eq:sigma_neighborhood} 
as the {\it rainbow  condition}.  The rationale for this terminology is explained in Remark
\ref{remark1}, which also provides more insight into this condition. 

The proof of Theorem \ref{thm:beta_H}, which is   provided  
in Section \ref{sec:pf_beta_H}, involves the following main steps. 
\begin{itemize}
\item
  In the first step, as in the proof of Theorem \ref{thm:beta_estimate}, we show that  the normalized gradient of the log pseudo-likelihood \eqref{eq:mple_equation_HIII} is concentrated around zero at the true model parameter
  using the exchangeable pairs technique (Lemma \ref{lm:2moment_H}). Although the general approach in this step is similar to the hardcore case, the calculations are more cumbersome because of the presence of multiple parameters and the general nature of the constraint graph $H$.
\item
  The second step entails showing that the log pseudo-likelihood is strongly concave, that is, its Hessian matrix is strictly negative definite with high probability. This turns out to be significantly more challenging than the hardcore model, where the special structure of the constraint graph was crucially leveraged to verify the strong concavity of the log pseudo-likelihood. Here, instead, the proof proceeds by first  relating the negative of the Hessian matrix to the covariance matrix of a certain multinomial distribution. Then, using standard facts about  eigenvalues of the covariance matrix of the multinomial distribution, we can show that the minimum eigenvalue of the negative Hessian is positive with high probability whenever \eqref{eq:sigma_neighborhood} holds (see Lemma \ref{lm:condition_H}).
\end{itemize}
This proof also shows that the log-pseudolikelihood is 1-Lipschitz, and hence, the MPL estimate in an $H$-coloring model can be efficiently computed by a simple gradient descent algorithm (as elaborated in Remark \ref{remark:estimate}).

\begin{remark}\label{remark1}
  {\em 
To gain  intuition
into the rainbow condition  \eqref{eq:sigma_neighborhood}, note that 
the  terminology
is motivated  by  the fact that the condition in \eqref{eq:sigma_neighborhood} holds whenever, 
for every color $r \in \{1, 2, \ldots, q-1\}$, 
the expected fraction of vertices with  color $r$ 
is asymptotically positive  (at least  when the  sequence of graphs $\{G_N\}_{N \in \N}$
has uniformly bounded maximum degree).
More precisely, it is shown in   Proposition \ref{ppn:expectation} that for such graph sequences, 
\eqref{eq:sigma_neighborhood}  is implied  by the condition that for
all $\bm \beta'$ in a neighborhood of $\bm \beta$, 
\begin{equation}
  \label{cond-spec}
\liminf_{N \rightarrow \infty}\E^N_{\bm \beta'}[\cons_r(\vec \sigma)] >0,  \forall r \in  \{1, \ldots, q-1\}, 
\end{equation}
where $\cons_r(\vec \sigma)$ 
is the number of vertices with color $r$ in $\vec \sigma$, as defined in \eqref{eq:count_sigma}.
(Note that the color $r=0$ is omitted from the condition in \eqref{cond-spec} because its coefficient $\beta_0=0$ is known.) To  see how the  condition \eqref{cond-spec} is related to   condition \eqref{eq:sigma_neighborhood}, 
note that for fixed $r \in \{1, \ldots,  q-1\}$, by invoking   \eqref{eq:usigma} and \eqref{eq:Q_sigma}
  $\ucons_r(\vec \sigma)$  can be rewritten as follows: 
      \begin{align*} 
\ucons_r(\vec \sigma)  =\sum_{u=1}^{N} \bm 1\{  \vec \sigma_{\cN_{G_N}(u)} \subseteq \cN_{H}(r) \} &
= \sum_{u=1}^{N} \bm 1\{Q_{G_N, H, \vec \sigma}(u, s) = 0\}  \nonumber \\ 
& = |\cU^N_r(\vec\sigma)|, 
\end{align*}
where
\begin{align}\label{eq:unconstrained_r}
  \cU^N_r(\vec\sigma) & :=\{ u \in V(G_N):  \sigma_v \in \cN_H(r). 
  \forall  v \in \cN_{G_N}(u)\}
\end{align}
Note that $\cU^N_r(\vec\sigma)$ 
is the set of vertices in $G_N$ for which all neighbors have colors in the $H$-neighborhood of $r$.
We refer to this set as the collection of $r$-{\it unconstrained} vertices, because, for every $u \in \cU^N_r(\vec \sigma)$, the (possibly new) spin configuration obtained by assigning the color $r$ to the vertex $u$   
  and leaving the colors of all other vertices unchanged, still forms a valid $H$-coloring of $G_N$. With this terminology,  the  condition in \eqref{eq:sigma_neighborhood} implies that for every $1 \leq r \leq q-1$, the number of $r$-{\it unconstrained} vertices is $\Theta(N)$  with probability going to 1. However, note that  
\begin{align}\label{eq:Ur_condition}
\ucons_r(\vec \sigma) \geq \sum_{u=1}^N \bm 1\{\sigma_u=r\} = \cons_r(\vec \sigma). 
\end{align} 
Thus,  a useful sufficient condition for \eqref{eq:sigma_neighborhood}  to hold  
is that for every $1 \leq r \leq q-1$, the number of vertices in $G_N$ that  are assigned color $r$ by $\vec \sigma$ is $\Theta(N)$,  with high probability under $\P_{\bm \beta}^N$.
Using the fact that $\cons_r(\vec\sigma)$ appears as a sufficient statistic in  \eqref{eq:model_H}, one
can apply Markov's inequality to relate this latter condition to the estimate
\eqref{cond-spec}.
For a rigorous justification,    see the proof of  Proposition \ref{ppn:expectation} in 
Section \ref{sec:pf_expectation}.}
\end{remark}

Theorem \ref{thm:beta_H} can be easily applied to the various classes of  $H$-coloring models described above. One important class of examples are constraint graphs where there is at least one  vertex with no constraints. More formally, a  constraint graph $H$ is said to have an {\it unconstrained} vertex $s \in H$, if $(s, t) \in E(H)$, for all $t \in V(H)$. This includes, for example, the multi-state hardcore model and the Widom-Rowlinson model, in both of which the vertex representing the color $0$ is unconstrained. We show in the corollary below, that in models where $H$ has at least one unconstrained vertex, the MPL estimate is $\sqrt N$-consistent whenever $\{G_N\}_{N \in \N}$ has uniformly bounded average degree.

\begin{cor}\label{cor:unconstrained_H} Fix $q \geq 2$, $\bm \beta = (\beta_1, \beta_2, \ldots, \beta_{q-1})' \in \R^{q-1}$, and a connected constraint graph $H \ne K_q^+$ that has an unconstrained vertex. Moreover, suppose for
  each $N \in \N$, $G_N$ is a graph on  $N$ vertices and the sequence $\{G_N\}_{N \in \N}$ satisfies the uniformly bounded average  degree condition
  \eqref{deg-cond}.  
  Then, given $\vec \sigma \sim \P^N_{\bm \beta}$ from the $H$-coloring model \eqref{eq:model_H_II}, 
  any sequence of MPL estimates $\{\hat {\bm \beta}^N(\vec \sigma)\}_{N \in \N}$ is $\sqrt N$-consistent for $\bm \beta$. 
\end{cor}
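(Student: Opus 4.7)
The plan is to apply Theorem \ref{thm:beta_H}, so the task reduces to verifying the rainbow condition \eqref{eq:sigma_neighborhood} for every $r \in \{1, \ldots, q-1\}$. Let $s^* \in V(H)$ be the unconstrained vertex, so that $(s^*, t) \in E(H)$ for every $t \in V(H)$; in particular $s^* \in \cN_H(r)$ for every color $r \in V(H)$. Introduce $T_u(\vec\sigma) := \bm 1\{\sigma_v = s^* \text{ for all } v \in \cN_{G_N}(u)\}$ and $W(\vec\sigma) := \sum_{u=1}^N T_u(\vec\sigma)$. Whenever $T_u(\vec\sigma) = 1$, the definition \eqref{eq:usigma} gives $\vec\sigma_{\cN_{G_N}(u)} \subseteq \{s^*\} \subseteq \cN_H(r)$, so $u$ is counted in $\ucons_r(\vec\sigma)$. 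Consequently $\ucons_r(\vec\sigma) \ge W(\vec\sigma)$ uniformly in $r$, and it suffices to show that $\lim_{\varepsilon \to 0} \liminf_N \P^N_{\bm\beta}(W(\vec\sigma) > \varepsilon N) = 1$.

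The key input is a Gibbs-comparison bound on marginals. For any $S \subseteq V(G_N)$, the all-$s^*$ assignment on $S$ is compatible with every $H$-coloring of $V(G_N) \setminus S$, because $s^*$ is universal in $H$. Comparing the Gibbs weight of this assignment with the (possibly infeasible) total weight $(\sum_t \lambda_t)^{|S|}$ gives
\[
\P^N_{\bm\beta}\bigl(\vec\sigma_S \equiv s^* \,\big|\, \vec\sigma_{V(G_N) \setminus S}\bigr) \,\ge\, p^{|S|}, \qquad p := \frac{\lambda_{s^*}}{\sum_{t=0}^{q-1} \lambda_t} \in (0,1),
\]
and therefore $\P^N_{\bm\beta}(T_u = 1) \ge p^{d_u(G_N)}$. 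The uniformly bounded average degree condition \eqref{deg-cond}, with $C := \sup_N N^{-1} \sum_u d_u(G_N) < \infty$, together with Markov's inequality, shows that for every $\eta \in (0,1)$ the set $\mathcal{L}_\eta := \{u : d_u(G_N) \le C/\eta\}$ satisfies $|\mathcal{L}_\eta| \ge (1 - \eta) N$. Restricting the sum for $\E[W]$ to $\mathcal{L}_\eta$ yields $\E^N_{\bm\beta}[W(\vec\sigma)] \ge (1 - \eta) p^{\lceil C/\eta \rceil} N = \Theta(N)$.

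The principal obstacle is upgrading this linear expectation bound to the high-probability lower bound required by \eqref{eq:sigma_neighborhood}, since having $\E[W]/N$ bounded below does not on its own preclude $W/N$ from concentrating on $\{0,1\}$. To close this gap, we plan to invoke the reduction described in Remark \ref{remark1}: it suffices to verify that $\liminf_N N^{-1} \E^N_{\bm\beta'}[\cons_r(\vec\sigma)] > 0$ uniformly for $\bm\beta'$ in a neighborhood of $\bm\beta$ and every $r \in \{1, \ldots, q-1\}$, and then apply Proposition \ref{ppn:expectation}. The former we obtain by extending the Gibbs comparison with one additional swap: on the event $\vec\sigma_{\cN_{G_N}(u)} \equiv s^*$, changing $\sigma_u$ to $r$ remains valid because $(s^*, r) \in E(H)$, and the Gibbs cost is $\lambda_r/\lambda_{s^*}$, producing $\P^N_{\bm\beta'}(\sigma_u = r) \ge (\lambda_r/\lambda_{s^*}) p^{d_u(G_N) + 1}$ and hence, by restricting once more to $\mathcal{L}_\eta$, $\E^N_{\bm\beta'}[\cons_r(\vec\sigma)] \gtrsim N$; continuity of these bounds in $\bm\beta'$ gives the required uniformity on a neighborhood of $\bm\beta$. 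A remaining technical subtlety is that Proposition \ref{ppn:expectation} is stated under bounded maximum degree, whereas the corollary assumes only bounded average degree; we expect the argument to adapt by carrying the restriction to $\mathcal{L}_\eta$ through the proof of the proposition, so that only the linear-size set of low-degree vertices needs to be controlled.
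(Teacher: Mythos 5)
Your proposal is correct and follows essentially the same route as the paper: the paper likewise verifies the rainbow condition by checking \eqref{suff:cond} and invoking Proposition \ref{ppn:expectation}, lower-bounding $\P^N_{\bm b}(\sigma_u = r)$ through a local recoloring that paints $\cN_{G_N}(u)$ with the unconstrained color (executed there as an injection on $\Omega_{G_N}(H)$ with preimage size at most $q^{d_u(G_N)+1}$, rather than your conditional-probability comparison), and then restricting to the linear-size set of low-degree vertices via Markov's inequality (Lemma \ref{lm:M}). The subtlety you flag about Proposition \ref{ppn:expectation} being stated for bounded maximum degree is harmless: its proof is a pure log-partition-function convexity argument that never uses any degree hypothesis, so no adaptation is needed --- the degree condition enters only where you (and the paper) already use it, namely in verifying \eqref{suff:cond}.
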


The proof of this corollary is given in Section \ref{sec:pfunconstrained_H}. It entails showing $\liminf_{N \rightarrow \infty}\E^N_{\bm b}[\cons_r(\vec \sigma)] >0$, for all $1 \leq r \leq q-1$ and all $\bm b \in \R^{q-1}$ (recall the discussion in Remark \ref{remark1}).  In  particular,
Corollary \ref{cor:unconstrained_H} 
implies the $\sqrt N$-consistency of the multi-state hardcore model and the Widom-Rowlinson model whenever $|E(G_N)|=O(N)$.

However, Corollary \ref{cor:unconstrained_H}  does not apply to the $q$-coloring model because its constraint graph has no vertex that is unconstrained. In fact, we show in Section \ref{subs-eg1} that   
in this case, bounded average degree is not always enough to ensure consistent estimation. In particular, 
 the rainbow condition is violated and  the result of Theorem \ref{thm:beta_H} do not hold. However, interestingly, as the next corollary shows, consistent estimation is still possible for 
   $q$-coloring models if we strengthen the condition on the graph sequence $\{G_N\}_{N \in  \N}$.
   Specifically,  for $v \in V(G_N)$, let  $d_{2,  v}(G_N)$ be the number of vertices that are at distance either 1 or 2 from $v$ in $G_N$.   Then we have the following result.

\begin{cor}\label{cor:coloring} Fix $q \geq 2$ and $\bm \beta = (\beta_1, \beta_2, \ldots, \beta_{q-1})' \in \R^{q-1}$.  Suppose for
    each $N \in \N$, $G_N$ is a graph on  $N$ vertices with $|\Omega_{G_N}(K_q)| \geq 1$, and the sequence $\{G_N\}_{N \in \N}$  has uniformly bounded average 2-neighborhood, that is, 
\begin{equation}
  \label{deg2-cond}
 \sup_{N \geq 1}  \frac{1}{N} \sum_{v=1}^N d_{2,  v}(G_N) < \infty. 
  \end{equation} 
Then given a sample $\vec \sigma \sim \P^N_{\bm \beta}$ from the $q$-coloring model (that is, the
$H$-coloring model with $H=K_q$), any sequence of MPL estimates $\{\hat {\bm \beta}^N(\vec \sigma)\}_{N \in \N}$ is $\sqrt N$-consistent for $\bm \beta$. 
\end{cor}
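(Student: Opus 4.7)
The plan is to apply Theorem~\ref{thm:beta_H}. Since $d_v(G_N) \leq d_{2,v}(G_N)$ for every vertex $v$, the hypothesis \eqref{deg2-cond} immediately implies the uniformly bounded average degree condition \eqref{deg-cond}, so only the rainbow condition \eqref{eq:sigma_neighborhood} needs to be verified for each $r \in \{1,\ldots,q-1\}$. By Proposition~\ref{ppn:expectation} (as discussed in Remark~\ref{remark1}), this in turn reduces to showing that, for every $\bm\beta'$ in a neighborhood of $\bm\beta$, $\liminf_{N \to \infty} N^{-1}\E_{\bm\beta'}^N[\cons_r(\vec\sigma)] > 0$, which is the form of \eqref{cond-spec} that I will verify.

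The key combinatorial input is a Tur\'an-style lemma: every graph $G$ on $N$ vertices contains a subset $S \subseteq V(G)$ with $|S| \geq \sum_v (d_{2,v}(G)+1)^{-1}$ whose closed neighborhoods $\{v\} \cup \cN_G(v)$, $v \in S$, are pairwise disjoint. I would prove this by imitating the probabilistic proof of Tur\'an's theorem in the auxiliary distance-$2$ graph: pick a uniformly random total order $\prec$ on $V(G)$ and place $v$ into $S$ whenever $v$ precedes every other vertex in its closed distance-$2$ neighborhood. Then $\P(v \in S) = (d_{2,v}(G)+1)^{-1}$, while any two elements of $S$ lie at graph distance at least $3$, forcing the required disjointness. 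Cauchy--Schwarz together with \eqref{deg2-cond} gives $\sum_v (d_{2,v}(G_N)+1)^{-1} \geq N^2/(N+\sum_v d_{2,v}(G_N)) \geq c_0 N$ for a positive constant $c_0$ depending only on the uniform bound, so one fixes a deterministic $S_N \subseteq V(G_N)$ with $|S_N| \geq c_0 N$. Two consequences of the disjointness that I use repeatedly are: $S_N$ is an independent set of $G_N$, and every $v \in V(G_N)$ satisfies $|\cN_{G_N}(v) \cap S_N| \leq 1$.

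For any $r$ and any $\bm\beta'$, the union bound gives $\P_{\bm\beta'}(u \notin \cU^N_r) \leq \sum_v a_{uv}(G_N)\,\P_{\bm\beta'}(\sigma_v=r)$; summing over $u \in S_N$ and swapping the order of summation, while invoking $|\cN_{G_N}(v) \cap S_N| \leq 1$, yields
\[
\sum_{u \in S_N}\P_{\bm\beta'}(u \in \cU^N_r) \;\geq\; |S_N| - \sum_v\P_{\bm\beta'}(\sigma_v=r) \;=\; |S_N| - \E_{\bm\beta'}[\cons_r(\vec\sigma)].
\]
Combining this with $\ucons_r \geq \sum_{u \in S_N}\bm 1\{u \in \cU^N_r\}$ and the trivial inequality $\ucons_r \geq \cons_r$ from \eqref{eq:Ur_condition} gives the clean two-sided estimate
\[
\E_{\bm\beta'}[\ucons_r(\vec\sigma)] \;\geq\; \max\bigl(\E_{\bm\beta'}[\cons_r],\;|S_N|-\E_{\bm\beta'}[\cons_r]\bigr) \;\geq\; |S_N|/2 \;\geq\; c_0 N/2,
\]
regardless of where $\E_{\bm\beta'}[\cons_r] \in [0,N]$ happens to lie. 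Finally, the $q$-coloring Gibbs conditional satisfies $\P_{\bm\beta'}(\sigma_u = r \mid \sigma_{-u}) \geq (e^{\beta'_r}/\sum_{s=0}^{q-1}e^{\beta'_s})\,\bm 1\{u \in \cU^N_r\}$, so taking expectations and summing over $u$ yields $\E_{\bm\beta'}[\cons_r(\vec\sigma)] \geq (e^{\beta'_r}/\sum_s e^{\beta'_s})\,\E_{\bm\beta'}[\ucons_r(\vec\sigma)] \geq c'N$ uniformly over $\bm\beta'$ in a compact neighborhood of $\bm\beta$, establishing \eqref{cond-spec}.

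The main obstacle I anticipate is the Tur\'an-type lemma itself: arranging the random-ordering construction so that $S_N$ attains linear size in $N$ while retaining the strong pairwise-disjointness of closed neighborhoods is precisely what allows the $|\cN_{G_N}(v) \cap S_N| \leq 1$ bound to be invoked, and, through the simple union-bound argument above, bypasses any need for decay-of-correlation estimates for the $q$-coloring model at arbitrary $\bm\beta$.
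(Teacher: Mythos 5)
Your proof is correct in substance but takes a genuinely different route from the paper's in its second half. Both arguments rest on the same Tur\'an-style probabilistic-method lemma (the paper's Lemma \ref{lm:2neighborhood}, which extracts a neighborhood-disjoint set of size at least $N/(4(1+S))$, with $S$ the uniform bound in \eqref{deg2-cond}, from any subset $W$ with $|W|\geq N/2$); your random-ordering construction and the AM--HM bound are essentially identical to the paper's. Where you diverge is in how the lemma is deployed. The paper's argument is entirely deterministic: for every feasible coloring $\vec\sigma$, either $|\cU^N_r(\vec\sigma)|\geq N/2$, or else the complement has size at least $N/2$ and the lemma is applied \emph{to that complement} to produce a linear-size neighborhood-disjoint set $\cY$ each of whose members has a distinct neighbor colored $r$, whence $\ucons_r(\vec\sigma)\geq \cons_r(\vec\sigma)\geq |\cY|\gtrsim N$ surely; the rainbow condition \eqref{eq:sigma_neighborhood} then holds with probability one and Theorem \ref{thm:beta_H} applies directly. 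You instead fix one set $S_N$ in advance, run a union-bound/averaging argument to get the two-sided expectation estimate $\E_{\bm \beta'}[\ucons_r(\vec\sigma)]\geq |S_N|/2$, push this through the Gibbs conditional to obtain $\E_{\bm \beta'}[\cons_r(\vec\sigma)]\gtrsim N$, and then invoke Proposition \ref{ppn:expectation} to upgrade the expectation bound to the high-probability statement. Your route is a clean illustration of the sufficient condition \eqref{cond-spec}, but it is longer and passes through more machinery than the paper's deterministic argument, which needs neither the Gibbs conditional nor Proposition \ref{ppn:expectation}.

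One point you should patch: Proposition \ref{ppn:expectation} is stated under the hypothesis that $\{G_N\}_{N\in\N}$ has uniformly bounded \emph{maximum} degree, and this is not implied by the bounded average $2$-neighborhood condition \eqref{deg2-cond} (a vertex of degree of order $\sqrt N$ is compatible with \eqref{deg2-cond}). As it happens, the proof of Proposition \ref{ppn:expectation} never uses the maximum-degree assumption---it only manipulates the log-partition function---so your argument survives, but you must either note this explicitly or restate the proposition without that hypothesis rather than cite it as written.
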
 

Note that this result, in particular, implies $\sqrt N$-consistency of the MPL estimates for $q$-coloring models on sequences of graphs with uniformly bounded maximum degree and the sparse Erd\H os-R\'enyi random graph sequence $\{{\mathcal G}(N, c/N)\}_{N \in \N}$, for some fixed $c > 0$, since these have   bounded average 2-neighborhoods.

In view of  Corollary \ref{cor:coloring} it is natural to  ask  if 
  consistent estimation may be possible for all $H$-coloring models, even  when
  the  rainbow  condition is violated, as long as a more stringent condition on the graph sequence,
such as   \eqref{deg2-cond}, 
  is imposed. 
  In Section  \ref{subs-eg1} we answer this question in  the negative.
  Specifically,  we construct a connected constraint graph $H$
          and a sequence of uniformly bounded maximum degree graphs $\{G_N\}_{N \in \N}$ where one of the colors can   only appear $O(1)$ times in any valid $H$-coloring of $G_N$, and 
          the rainbow  condition \eqref{eq:sigma_neighborhood} is violated.
          For  this example, we also show that it is impossible to consistently estimate parameters, thus illustrating the tightness of 
        the rainbow condition \eqref{eq:sigma_neighborhood} in obtaining $\sqrt N$-consistent
        estimates for general models  even under the stronger condition
          that  the  graph  sequences  have  uniformly bounded
          maximum degree. 
        Furthermore,   to address the tightness of the conditions in Corollary \ref{cor:coloring},         
          in Section \ref{subs-eg2} we construct a $q$-coloring model on a sequence of graphs with bounded average degree where consistent estimation is also impossible, thus demonstrating the need for a stronger condition  on  the graph  sequence, like the one  we have imposed  in \eqref{deg2-cond} on the  $2$-neighborhood.

Thus, the proof of Corollary \ref{cor:coloring}, which is given in Section \ref{sec:pfcorollary_II},
  makes crucial use of both the stucture of the $q$-coloring model, and the $2$-neighborhood
  condition \eqref{deg2-cond}. 
  The main ingredient of the proof is the following combinatorial observation: Any graph with bounded average 2-neighborhood has a subset of vertices of size $\Theta(N)$ whose neighborhoods in $G_N$ are mutually disjoint (see Lemma \ref{lm:2neighborhood}). Once such a set is guaranteed to exist, 
condition \eqref{eq:sigma_neighborhood} can be easily verified using the observation that when $H=K_q$, the set of $r$-unconstrained vertices $\cU_r^N(\vec \sigma)$, as defined in \eqref{eq:unconstrained_r}, is equal to $\{ u \in V(G_N): r \notin \cN_{G_N(u)}\}$. The combinatorial observation is proved by a probabilistic method argument, which mimics the proof of a celebrated theorem  of Turan \cite[Page 95]{independent_set_book}. One of the consequences of Turan's theorem is that any graph with bounded average degree (equivalently, 1-neighborhood) has an independent set of size $\Theta(N)$. Drawing parallels from this, here we show that bounded average 2-neighborhood implies that the graph has a linear size subset of vertices that are distance at least 2 away from each other, that is, they have mutually disjoint neighborhoods.

\subsection{Organization} The rest of the paper is organized as follows: In Section \ref{sec:pfbeta_estimate} we prove Theorem \ref{thm:beta_estimate}. Proposition \ref{ppn:Hcoloring} is proved in Section \ref{sec:MPLpf}. The proof of Theorem \ref{thm:beta_H} is given in Section \ref{sec:pf_beta_H}. Another sufficient condition for \eqref{eq:sigma_neighborhood} and the proof of Corollary  \ref{cor:unconstrained_H} are given in Section \ref{sec:pfcorollary}. The proof of Corollary \ref{cor:coloring} is given in Section \ref{sec:pfcorollary_II}. Examples where consistent estimation is impossible are discussed in Section \ref{sec:examples}.

\section{Proof of Theorem \ref{thm:beta_estimate}}
\label{sec:pfbeta_estimate}

Throughout  this section we will  suppress the dependence on $\vec \sigma$ and denote by $\hat \beta^N:=\estimate $ and $\hat \lambda^N :=\hat \lambda^N (\vec \sigma)$ the MPL estimate of $\beta$ and $\lambda$, respectively, as defined in \eqref{eq:beta_estimate} and \eqref{eq:lambda_estimate}, respectively.  We also let
  $\E_\beta^N$  denote expectation with respect to $\P_\beta^N$. 
The first step in the proof of Theorem \ref{thm:beta_estimate} is to show concentration of the derivative of the log
pseudo-likelihood function about zero at the true parameter of the model. To this end, in Lemma \ref{lm:2moment},   we establish a second moment bound for $L^N_{\vec \sigma}(\cdot)$.

\begin{lem}\label{lm:2moment} Given $\vec \sigma \sim \P^N_\beta$, let $L^N_{\vec \sigma}(\cdot)$ be as defined in \eqref{eq:mple_solution}. Then, under the assumptions of Theorem \ref{thm:beta_estimate}, for any $\beta \in \R$, 
$$\E_\beta^N[L_{\vec \sigma}^N(\beta)^2] \lesssim_\beta \frac{1}{N}.$$
\end{lem}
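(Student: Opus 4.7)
\medskip
\noindent
\emph{Plan.} Writing $F(\vec\sigma) := N L_{\vec\sigma}^N(\beta)$, the claim is equivalent to $\E_\beta^N[F(\vec\sigma)^2] \lesssim_\beta N$. The conditional-distribution formula \eqref{cond-prob} gives $\E_\beta^N[\sigma_u \mid \vec\sigma_{-u}] = m_u(\vec\sigma)$, where $m_u(\vec\sigma)$ denotes the fraction appearing inside the braces in \eqref{eq:mple_solution}; thus $F(\vec\sigma) = \sum_{u=1}^N g_u(\vec\sigma)$ with $g_u(\vec\sigma) := \sigma_u - m_u(\vec\sigma)$. Two properties will be crucial: each $g_u$ is \emph{conditionally centered}, $\E_\beta^N[g_u \mid \vec\sigma_{-u}] = 0$; and $g_u$ is \emph{local}, depending on $\vec\sigma$ only through $\sigma_u$ and $(\sigma_v)_{v \in \cN_{G_N}(u)}$. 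I plan to couple these two facts through Chatterjee's method of exchangeable pairs.

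\medskip
\noindent
\emph{Step 1 (the exchangeable pair).} Let $I$ be uniform on $[N]$ and independent of $\vec\sigma$, and define $\vec\sigma'$ by replacing $\sigma_I$ with an independent draw from $\P_\beta^N(\,\cdot\, \mid \vec\sigma_{-I})$. This single-site Glauber update leaves $\P_\beta^N$ invariant, so $(\vec\sigma, \vec\sigma')$ is exchangeable. Setting the antisymmetric statistic
$$
f(\vec\sigma,\vec\sigma') := N(\sigma_I - \sigma'_I),
$$
\eqref{cond-prob} yields $\E_\beta^N[\sigma'_I \mid \vec\sigma, I] = m_I(\vec\sigma)$, hence $\E_\beta^N[f(\vec\sigma,\vec\sigma') \mid \vec\sigma] = \sum_{u=1}^N (\sigma_u - m_u(\vec\sigma)) = F(\vec\sigma)$.

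\medskip
\noindent
\emph{Step 2 (apply the identity and the locality bound).} The standard antisymmetric exchangeable-pairs identity gives
$$
\E_\beta^N[F(\vec\sigma)^2] = \tfrac{1}{2}\,\E_\beta^N\!\bigl[f(\vec\sigma,\vec\sigma')\bigl(F(\vec\sigma) - F(\vec\sigma')\bigr)\bigr].
$$
Since $\vec\sigma$ and $\vec\sigma'$ differ only at coordinate $I$, locality of $g_u$ implies that only the terms with $u \in \{I\} \cup \cN_{G_N}(I)$ contribute to $F(\vec\sigma) - F(\vec\sigma')$. Together with the pointwise bound $|g_u| \leq 1$, this gives the key locality estimate
$$
\bigl| F(\vec\sigma) - F(\vec\sigma') \bigr| \leq 1 + d_I(G_N).
$$

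\medskip
\noindent
\emph{Step 3 (conclude).} Combining Steps 1--2 with $|f(\vec\sigma,\vec\sigma')| \leq N|\sigma_I - \sigma'_I|$ and taking the outer expectation (averaging over the uniform index $I$),
$$
\E_\beta^N[F(\vec\sigma)^2] \leq \frac{N}{2}\, \E_\beta^N\!\bigl[|\sigma_I - \sigma'_I|\,(1 + d_I(G_N))\bigr] \leq \frac{1}{2}\!\left( N + \sum_{u=1}^N d_u(G_N) \right),
$$
where I used $|\sigma_I - \sigma'_I| \leq 1$. The bounded average-degree hypothesis \eqref{deg-cond} makes the right-hand side $O(N)$, and dividing by $N^2$ yields the claim.

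\medskip
\noindent
\emph{Main obstacle.} The one substantive step is the locality estimate in Step 2: verifying that resampling a single coordinate $\sigma_I$ perturbs only $1 + d_I(G_N)$ of the $N$ summands comprising $F$. Everything else is relatively standard exchangeable-pairs bookkeeping. A notable feature of this argument is that it completely sidesteps any Dobrushin-style spatial-mixing hypothesis, so the bound holds uniformly across all temperature regimes; the $\beta$-dependence is in fact only through the universal bound $|g_u| \leq 1$, which is $\beta$-uniform.
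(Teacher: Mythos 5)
Your proposal is correct and follows essentially the same route as the paper: a single-site Glauber exchangeable pair, the antisymmetric statistic $\sum_i(\sigma_i-\sigma_i')$, the identity $\E[F^2]=\tfrac12\E[(F(\vec\sigma)-F(\vec\sigma'))f]$, and the observation that resampling one coordinate perturbs only the $1+d_I(G_N)$ local summands, so that the bounded average degree hypothesis closes the argument. The only differences are cosmetic (you work with $NL^N_{\vec\sigma}$ rather than $L^N_{\vec\sigma}$, and you argue locality abstractly from the structure of the conditional means rather than via the explicit closed form \eqref{LN-rewrite} in terms of $\cons$ and $\ucons$, as the paper does).
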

\begin{proof} Fix $N \in \N$ and write $L_{\vec \sigma} =  L_{\vec \sigma}^N$  and $\E_{\bm \beta} = \E^N_{\bm \beta}$.    
  For any $1 \leq J \leq N$ and $\vec \tau \in \{0, 1\}^N$, let 
\begin{equation}
    \label{tauL}
    \vec \tau_{(J)}=(\tau_1, \tau_2, \ldots, \tau_{J-1}, 1- \tau_J, \tau_{J+1}, \ldots, \tau_N)
\end{equation}
be the configuration obtained by switching the state of the $J$-th vertex in $\vec \tau$. 
Next, for $\vec \tau, \vec \tau^* \in \{0, 1\}^N$, define $\newF(\vec \tau, \vec \tau^*):=\frac{1}{N} \sum_{i=1}^N(\tau_i-\tau_i^*)$.  
  Now choose a coordinate $I$ uniformly at random from $\{1, \ldots, N\}$ and replace the $I$-th coordinate of $\vec \sigma$ by a sample drawn from the conditional distribution of $\sigma_I$ given $(\sigma_v)_{v \ne I}$. Denote the resulting vector by $\vec \sigma'$.  Note that
  \begin{equation}
    \label{F-rel}
    \newF(\vec \sigma, \vec \sigma')=\sigma_I-\sigma_I'. 
  \end{equation} 
  Along with  \eqref{cond-prob} and \eqref{eq:mple_solution}, and the convention  $\log 0  = -\infty$,  this implies 
  \begin{align}\label{eq:mple_solution_III}
f(\vec \sigma) & :=\E_{\beta}[\newF(\vec \sigma, \vec \sigma')|\vec \sigma] \nonumber \\ 
&=\frac{1}{N}  \sum_{v=1}^N (\sigma_v-\E_{\beta}[\sigma_v|(\sigma_u)_{u \ne v}]) \nonumber \\
&= \frac{1}{N} \sum_{u=1}^N \left\{ \sigma_u -  \frac{ e^{\beta + \sum_{v \ne u}  a_{uv}(G_N) \log  (1- \sigma_v)} }{ e^{\beta + \sum_{v \ne u}  a_{uv}(G_N) \log  (1- \sigma_v)}+ 1 } \right\}  \nonumber \\ 
&= L_{\vec \sigma}(\beta). 
  \end{align}
   Multiplying both sides of the last display by
 $f(\vec \sigma)$ and taking expectations, we see that
 \[ \E_{\beta}[f(\vec \sigma)^2]= \E_{\beta}[f(\vec \sigma) \newF(\vec \sigma, \vec \sigma')]. \]
 Now, since $(\vec \sigma, \vec \sigma')$ is an exchangeable pair,  and $\newF$ is anti-symmetric, wee have 
\begin{align*} 
\E_{\beta}[f(\vec \sigma) \newF(\vec \sigma, \vec \sigma') ] & = \E_{\beta}[f(\vec \sigma') \newF(\vec \sigma', \vec \sigma)] \nonumber \\ 
& =-\E_{\beta}[f(\vec \sigma') \newF(\vec \sigma, \vec \sigma')].
\end{align*} 
The last two displays, when combined with \eqref{eq:mple_solution_III},   imply  
\begin{align}\label{eq:2moment_pf_I}
\E_{\beta}[L_{\vec \sigma}(\beta)^2] & =\E_{\beta}[f(\vec \sigma)^2] \nonumber \\ 
& =\frac{\E_{\beta}[(f(\vec \sigma)-f(\vec \sigma'))\newF(\vec \sigma, \vec \sigma')]}{2}.
\end{align}   
Now,  define $p_J(\vec \tau):=\P^N_{\beta}(\sigma_J' =1-\tau_J|\vec \sigma=\tau, I=J),$ for $J \in \{1, \ldots, N\}$, recall the definition of $\sigma_{(J)}$  from \eqref{tauL} and use the fact that $\newF(\sigma, \sigma_{(J)}) = 1 - 2 \sigma_J$,
to obtain 
\begin{align}\label{eq:2moment_pf_II}
 \E_{\beta}  [(f(\vec \sigma)-f(\vec \sigma'))\newF(\vec \sigma, \vec \sigma')|\vec \sigma] & =\frac{1}{N}  \sum_{J=1}^N (f(\vec \sigma)-f(\vec \sigma_{(J)})) \newF(\vec \sigma, \vec \sigma_{(J)}) p_J(\vec \sigma) \nonumber \\ 
&= \frac{1}{N} \sum_{J=1}^N (f(\vec \sigma)-f(\vec \sigma_{(J)})) (1 - 2 \sigma_J) p_J(\vec \sigma). 
  \end{align} 
Since \eqref{eq:mple_solution_III} and \eqref{LN-rewrite}  imply  
 $f(\vec \sigma)=L_{\vec \sigma}(\beta)=\frac{1}{N}  \left( \cons(\vec \sigma) -  e^{\beta} \ucons(\vec \sigma)\right)/\left(e^{\beta}+1\right)$  and  the definition of $c^N$ in \eqref{def-aN} implies $\cons(\vec \sigma)-\cons(\vec \sigma_{(J)})= 1 - 2 \sigma_J$, we have 
\begin{align}\label{eq:2moment_pf_III}
 f(\vec \sigma)-f(\vec \sigma_{(J)}) & =\frac{ (1 - 2 \sigma_J)- e^{\beta} (\ucons(\vec \sigma) - \ucons(\vec \sigma_{(J)})) }{N(e^{\beta}+1)} . 
\end{align}

Combining \eqref{eq:2moment_pf_I}, \eqref{eq:2moment_pf_II},  \eqref{eq:2moment_pf_III}, and the fact that $1 - 2 \sigma_J \in \{-1,1\}$, which implies $(1 - 2 \sigma_J)^2 = 1$, 
it follows that 
\begin{align}\label{eq:T12}
\E_{\beta}[L_{\vec \sigma}^2] & = \tfrac{1}{2} (T_1 - T_2) , 
\end{align}
where for $i = 1, 2,$ $T_i = T_i^N (\vec \sigma)$ are given by 
\begin{align*}
T_1 & := \frac{\E_{\beta} \left[ \sum_{J=1}^N  p_J(\vec \sigma) \right]}{ N^2 (e^{\beta}+1)}, \nonumber \\ 
T_2 & := \frac{\E_{\beta} \left[ e^{\beta} \sum_{J=1}^N  (\ucons(\vec \sigma) - \ucons(\vec \sigma_{(J)})) (1 - 2\sigma_J) p_J(\vec \sigma) \right]}{ N^2 (e^{\beta}+1)}. 
\end{align*}
Note that $T_1 \lesssim_\beta \frac{1}{N}$, since $p_J(\vec \sigma) \leq 1$.
On the other hand, by the definition of $\ucons$ in \eqref{def-uN}, it follows that 
\[ |\ucons(\vec \sigma) - \ucons(\vec \sigma_{(J)})| \leq d_J(G_N)+1,\]
where  recall  $d_J(G_N)$ is the degree of the vertex $J$ in $G_N$.
Hence, 
$$T_2 \lesssim_\beta \frac{1}{N^2} \sum_{J=1}^N d_J(G_N)  \lesssim\frac{1}{N},$$
where the last step uses the assumption \eqref{deg-cond}. Due  to \eqref{eq:T12}, this implies $\E_{\beta}[f(\vec \sigma)^2] \lesssim_\beta \frac{1}{N}$, which completes the proof of the lemma. 
\end{proof}

We now proceed to show that the log pseudo-likelihood for the hardcore model
is strictly concave  whenever $\{G_N\}_{N \geq 1}$ has uniformly bounded average degree. To this end,
note from \eqref{LN-rewrite} that
$$
\frac{\mathrm d}{\mathrm d\beta}  L_{\vec \sigma}^N(\beta) = - \frac{e^\beta}{e^\beta + 1} \cdot \frac{\cons(\vec \sigma) + \ucons(\vec \sigma) }{N},
$$ 
with  $\cons(\vec \sigma)$ and $\ucons(\vec \sigma)$ as defined in \eqref{def-aN} and \eqref{def-uN}, respectively. Therefore, to show the strict concavity of the log-pseudolikelihood function, it suffices to show that $\cons(\vec \sigma)$ is $\Theta(N)$,  with high probability. As we show below,  this, in turn, follows from the fact that $\frac{1}{N}\frac{\mathrm d}{\mathrm d b} F_{G_N}(\cdot)$, the derivative 
of the normalized log partition function,  is strictly positive in the limit.

\begin{lem}\label{lm:lp}  Suppose $\{G_N\}_{N \geq 1}$ is a sequence of graphs with uniformly bounded average degree, that is, satisfying
  \eqref{deg-cond}. 
  Then, for all $b \in \R$, 
\begin{align}\label{eq:lp_derivative}
\liminf_{n\rightarrow \infty} \frac{1}{N} F_{G_N}'(b) > 0.
\end{align} 
As a consequence, for $\vec \sigma \sim \P^N_\beta$, 
\begin{align}\label{eq:asigma_epsilon}
\limsup_{\varepsilon \rightarrow 0} \limsup_{N\rightarrow\infty}\P^N_{\beta}(\cons(\vec \sigma) \leq \varepsilon N)=0. 
\end{align}
\end{lem}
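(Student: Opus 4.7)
The plan is to first establish \eqref{eq:lp_derivative} by a self-bounding argument combining a union bound with Markov's inequality on the degree sequence, and then to deduce the concentration statement \eqref{eq:asigma_epsilon} from a standard exponential Chebyshev estimate together with the convexity of $F_{G_N}$.

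For \eqref{eq:lp_derivative}, note first that differentiating the log-partition function \eqref{eq:FGn} yields
\[ F_{G_N}'(b) \;=\; \E^N_b[\cons(\vec\sigma)] \;=\; \sum_{u=1}^{N} \P^N_b(\sigma_u = 1). \]
By the conditional formula \eqref{cond-prob}, $\P^N_b(\sigma_u = 1) = \tfrac{e^b}{1+e^b}\,\P^N_b(\sigma_v = 0\ \forall\, v \in \cN_{G_N}(u))$, which in particular forces $p^{\star} := \max_v \P^N_b(\sigma_v=1) \leq \tfrac{e^b}{1+e^b}$. A union bound over $\cN_{G_N}(u)$ therefore produces the per-vertex lower bound
\[ \P^N_b(\sigma_u = 1) \;\geq\; \frac{e^b}{1+e^b}\left(1 - d_u(G_N)\cdot \frac{e^b}{1+e^b}\right), \]
which is at least $\tfrac{e^b}{2(1+e^b)}$ whenever $d_u(G_N) \leq \tfrac{1+e^b}{2 e^b}$.

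Next, set $D := \sup_{N \geq 1} \tfrac{1}{N}\sum_{u=1}^N d_u(G_N)$, which is finite by \eqref{deg-cond}. Markov's inequality applied to the degree sequence yields $|\{u : d_u(G_N) \leq 2D\}| \geq N/2$ for every $N$. Choose $b_0 \in \R$ small enough that $\tfrac{1+e^{b_0}}{2 e^{b_0}} \geq 2D$; then for all $b \leq b_0$, the previous per-vertex estimate applied to these (at least $N/2$) low-degree vertices gives $F_{G_N}'(b)/N \geq \tfrac{e^b}{4(1+e^b)} > 0$. For $b > b_0$, the convexity of $F_{G_N}$ (which follows from its log-sum-exp form) implies that $F_{G_N}'$ is non-decreasing, so $F_{G_N}'(b)/N \geq F_{G_N}'(b_0)/N$, and the latter is bounded below by a positive constant depending only on $D$. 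This establishes \eqref{eq:lp_derivative}.

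For \eqref{eq:asigma_epsilon}, the exponential Markov inequality gives, for any $t > 0$,
\[ \P^N_\beta(\cons(\vec\sigma) \leq \varepsilon N) \;\leq\; e^{t\varepsilon N}\, \E^N_\beta\bigl[e^{-t\, \cons(\vec\sigma)}\bigr] \;=\; \exp\bigl\{t\varepsilon N + F_{G_N}(\beta - t) - F_{G_N}(\beta)\bigr\}, \]
where the last equality is a direct computation using the exponential family \eqref{eq:pmf_beta}. Convexity of $F_{G_N}$ yields $F_{G_N}(\beta) - F_{G_N}(\beta - t) \geq t\, F_{G_N}'(\beta - t)$, and combining this with \eqref{eq:lp_derivative} applied at $b = \beta - t$ gives, for some $c(\beta - t, D) > 0$ and all sufficiently large $N$, the estimate $\tfrac{1}{N}\log \P^N_\beta(\cons(\vec\sigma) \leq \varepsilon N) \leq t(\varepsilon - c(\beta - t, D))$. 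Fixing $t = 1$ and any $\varepsilon < c(\beta - 1, D)/2$ then yields exponential decay in $N$, which proves \eqref{eq:asigma_epsilon}. The main subtlety in this plan is isolating a linearly-sized set of low-degree vertices from the bounded-average-degree hypothesis; this is handled by the one-line Markov estimate, after which everything else is a convexity-plus-Chernoff manipulation.
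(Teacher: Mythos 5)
Your proof is correct, but your argument for \eqref{eq:lp_derivative} takes a genuinely different route from the paper's. The paper obtains the per-vertex lower bound $\P_b^N(\sigma_u = 1) \geq (1 + e^{|b| d_u(G_N)} 2^{d_u(G_N)})^{-1}$ by a combinatorial injection: it maps each independent set $A$ not containing $u$ to $(A\setminus \cN_{G_N}(u))\cup\{u\}$, controls the multiplicity of this map by $2^{d_u(G_N)}$ and the weight distortion by $e^{|b|d_u(G_N)}$, and thereby compares the two halves of the partition function directly. That bound holds for every vertex and every $b$ simultaneously, so the paper needs only the Markov-inequality step on the degree sequence (its Lemma \ref{lm:M}, identical to yours) to conclude. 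You instead exploit the self-bounding identity $\P^N_b(\sigma_u=1) = \tfrac{e^b}{1+e^b}\P^N_b(\sigma_v = 0\ \forall v\in\cN_{G_N}(u))$, which caps every marginal at $\tfrac{e^b}{1+e^b}$, and then run a union bound over the neighborhood. This is more elementary (no injection, no counting of preimages), but the resulting bound is vacuous unless $d_u(G_N)\cdot\tfrac{e^b}{1+e^b} < 1$, so it covers only low-degree vertices at sufficiently negative $b$; you must then patch in all larger $b$ via the monotonicity of $F_{G_N}'$, a step the paper does not need for \eqref{eq:lp_derivative} itself (it invokes that monotonicity only in the second half). Both routes yield a bound uniform in $N$ because your $b_0$ and the paper's $M_0$ depend only on $\sup_{N}\frac1N\sum_{u}d_u(G_N)$. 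Your derivation of \eqref{eq:asigma_epsilon} --- exponential Chebyshev, the identity $\E_\beta^N[e^{-t\cons(\vec\sigma)}] = e^{F_{G_N}(\beta-t)-F_{G_N}(\beta)}$, convexity of $F_{G_N}$, and \eqref{eq:lp_derivative} applied at $\beta - t$ --- is essentially the same as the paper's.
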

\begin{proof}
  Denote the collection of all independent sets of $G_N$ by $\cI(G_N)$. Then, fixing $u \in V(G_N)$, we partition  $\cI(G_N)$ as follows:  Let $\cI_u^+(G_N)$ be the collection of independent sets of $G_N$ containing the vertex $u$, and $\cI_u^-(G_N)$ the collection of independent sets of $G_N$ not containing the vertex $u$. Now, define the map $g: \cI_u^-(G_N) \rightarrow \cI_u^+(G_N)$, which takes $A \in \cI_u^-(G_N)$ to $(A \backslash \cN_{G_N}(u)) \bigcup \{u\} \in \cI_u^+(G_N)$.  Note that given $A' \in \cI_u^+(G_N)$, 
\begin{align}\label{eq:g_I} 
|g^{-1}(A')| & :=|\{A \in  \cI_u^-(G_N): g(A)=A'\}|  \leq  2^{|\cN_{G_N}(u)|} \leq 2^{d_u(G_N)}. 
\end{align} 
Moreover, for $A \in g^{-1}(A')$, $|A'| - d_u(G_N) \leq |A'| - 1 \leq  |A| \leq |A'| + {d_u(G_N)}$, which gives $e^{b |A|} \leq e^{|b| {d_u(G_N)} } e^{b |A'|}$. Together with \eqref{eq:g_I} this implies  
\begin{align}\label{eq:Z_bound}
\sum_{A \in \cI^-_u(G_N)} e^{b |A|} & \leq \sum_{A' \in \cI^+_u(G_N)} \sum_{A \in g^{-1}(A')} e^{b |A|}  \nonumber \\  
& \leq  e^{|b| {d_u(G_N)} } 2^{d_u(G_N)} \sum_{A' \in \cI^+_u(G_N)} e^{b |A'|}. 
\end{align}
Hence, it follows that 
\begin{align}\label{eq:sigmau}
\P^N_{b}(\sigma_u=1) = \frac{\sum_{A \in \cI^+_u(G_N)}e^{b |A|}}{\sum_{A \in \cI(G_N)} e^{b |A|}}  &= \frac{\sum_{A \in \cI^+_u(G_N)}e^{b |A|}}{\sum_{A \in \cI^+_u(G_N)} e^{b |A|} + \sum_{A \in \cI^-_u(G_N)} e^{b |A|}} \nonumber \\ 
&\geq \frac{1}{1+  e^{|b| {d_u(G_N)} } 2^{d_u(G_N)}}, 
\end{align}
where the last step  uses \eqref{eq:Z_bound}.
Now, noting from \eqref{eq:FGn}, \eqref{eq:pmf_beta} and \eqref{def-aN} that $F_{G_N}'(b) = \E_{b}^N[\cons(\vec \sigma)]$,
and using \eqref{eq:sigmau} yields, for any $M > 0$,  
\begin{align}\label{eq:FGN_bound}
\frac{1}{N}F_{G_N}'(b)  = \frac{1}{N}\E_{b}^N[\cons(\vec \sigma)] & = \frac{1}{N} \sum_{u=1}^N \P^N_{b}(\sigma_u=1) \nonumber \\  
& \geq \frac{1}{N} \sum_{u=1}^N \frac{1}{1+  e^{|b| {d_u(G_N)} } 2^{d_u(G_N)}} \nonumber \\ 
& \geq \frac{1}{1+  e^{|b| M } 2^{M}} \frac{|\{u: d_u (G_N)\leq M \}|}{N}  . 
\end{align}
We will now show that if $\frac{1}{N}\sum_{u=1}^N d_u (G_N) = O(1)$, then there exists $M_0 \geq  1$  such that the right-hand side in \eqref{eq:FGN_bound} is $\Omega(1)$.

  \begin{lem}\label{lm:M} Suppose $\{G_N\}_{N \in  \N}$ is a sequence of graphs with $\sup_{N \geq 1} \frac{1}{N} \sum_{u=1}^N d_u (G_N) < \infty$. Then there exists $0 < M_0 <  \infty$ such that for every  $N \in \N$, 
    \[ \frac{|\{u: d_u(G_N) \leq M_0\}|}{N} \geq \frac{1}{2}. \]
\end{lem}

  \begin{proof} Let $X_{G_N}$ be the discrete random variable that takes the  value $d_v(G_N)$ with probability $\frac{1}{N}$, 
    for $v \in \{1,\ldots, N\}$. 
    Then choosing $M_0:= 2 \sup_{N \in \N} \frac{1}{N} \sum_{u=1}^N d_u (G_N) < \infty$ and using Markov's inequality gives, 
\begin{align*} 
  \frac{|\{u: d_u (G_N) > M_0\}|}{N}  = \P(X_{G_N} > M_0)  & \leq \frac{\E[X_{G_N}]}{M_0} \nonumber \\ 
& = \frac{1}{M_0} \cdot \frac{1}{N}\sum_{u=1}^N d_u(G_N) \nonumber \\ 
& \leq \frac{1}{2}.
\end{align*} 
This completes the proof of the lemma. 
\end{proof}

The above lemma can now be used to complete the proof of  \eqref{eq:lp_derivative}. First, by Lemma \ref{lm:M} choose $M_0 \in (0,\infty)$ such that $\liminf_{N \rightarrow \infty} \frac{|\{u: d_u(G_N) \leq M_0\}|}{N} \geq \frac{1}{2}$. Then choosing $M=M_0$  in \eqref{eq:FGN_bound} and taking  the limit as $N \rightarrow \infty$ yields 
\begin{align*}
\liminf_{N \rightarrow \infty} \frac{1}{N}F_{G_N}'(b) & \geq \frac{1}{1+  e^{|b| M_0 } 2^{M_0}} \liminf_{N \rightarrow \infty} \frac{|\{u: d_u(G_N) \leq M_0 \}|}{N} \nonumber \\  
 & > 0. 
\end{align*}
This proves \eqref{eq:lp_derivative}.

To complete the proof of Lemma \ref{lm:lp},  it only remains to  establish \eqref{eq:asigma_epsilon}.
To this end, fix $\varepsilon, \delta>0$ and note by Markov's inequality,   
\begin{align*}
\P^N_{\beta}(\cons(\vec \sigma)<\varepsilon N)  = \P^N_{\beta}(e^{- \delta a(\vec \sigma)}>e^{- \delta \varepsilon N})  & \le e^{ \delta \varepsilon N} \E_{\beta}^N [ e^{-\delta \cons(\vec \sigma)}]  \nonumber \\ 
& \le e^{ \delta \varepsilon N+F_{G_N}(\beta-\delta)-F_{G_N}(\beta)}. 
\end{align*}
On taking logarithms of both sides above, we obtain
\begin{align*}
\log \P^N_{\beta}(\cons(\vec \sigma)<\varepsilon N) & \le \varepsilon\delta N-\int_{\beta-\delta}^{\beta}F_{G_N}'(t)\mathrm dt \nonumber \\   
& \le \varepsilon \delta N -F_{G_N}'(\beta-\delta)\delta,
\end{align*}
where the last step uses the monotonicity of the function $F_{G_N}'(\cdot)$, which holds because  $F_{G_N}''(b)=\Var_{b}(\cons(\vec \sigma)) \geq 0$, for all $b \in \R$. Dividing both sides of the last display by $N$ and taking limits, first as $N\rightarrow \infty$ followed by $\varepsilon \rightarrow 0$, we obtain  
\begin{align*} 
\limsup_{\varepsilon \rightarrow 0}  \limsup_{N\rightarrow\infty}\frac{1}{N}\log \P^N_{\beta}(\cons(\vec \sigma)<\varepsilon N) & \le   -\delta \liminf_{N\rightarrow\infty}\frac{1}{N}{F_{G_N}'(\beta-\delta)} \nonumber \\ 
& < 0, 
\end{align*}
where the last inequality uses \eqref{eq:lp_derivative}. This proves \eqref{eq:asigma_epsilon}.  
\end{proof}

The above two lemmas can be used to complete the proof of Theorem \ref{thm:beta_estimate}. 

\begin{proof}[Proof  of Theorem \ref{thm:beta_estimate}.]
  For notational simplicity, we will denote $L_{\vec \sigma} := L^N_{\vec \sigma}$ and
  $M_{\vec \sigma} := M^N_{\vec \sigma}$. 
To begin with,  recalling \eqref{LN-rewrite}, define 
\begin{align}\label{eq:L_beta_I}
\bar L_{\vec \sigma}(\beta) :=(e^{\beta}+1)L_{\vec \sigma}(\beta)=\frac{\cons(\vec \sigma) -e^{\beta} \ucons(\vec \sigma)}{N}.
\end{align} 
Next, fix $\delta>0$. Then Markov's inequality and Lemma \ref{lm:2moment} show that  for $K=K(\delta, \beta) > 0$ large enough (not depending on $N$), 
\begin{eqnarray}\label{chebyshev}
\P^N_{\beta}\left(|\bar L_{\vec \sigma}(\beta)|> \frac{K}{\sqrt N} \right)  \lesssim_{\beta} \frac{1}{K^2} \leq \frac{\delta}{2}. 
\end{eqnarray}
  Moreover, choose $\varepsilon:=\varepsilon(\delta, \beta) > 0$ such that $\P^N_\beta(\cons(\vec \sigma) \leq \varepsilon N) \leq \tfrac{\delta}{2}$, for all $N$ large enough. Note this  is possible by \eqref{eq:asigma_epsilon} of Lemma \ref{lm:lp}. Define 
  $$T_{N}:=\left\{\vec \sigma \in \{0, 1\}^N: |\bar L_{\vec \sigma}(\beta)| \le K/\sqrt N \text{ and } \cons(\vec \sigma) >  \varepsilon N \right\}.$$ Then, by \eqref{chebyshev} and the subsequent observation, $\P^N_{\beta}(T_N)\ge 1-\delta$. Now, for $\vec \sigma \in T_N$,
 using \eqref{eq:L_beta_I} for  the first  two equalities below, it follows that 
\begin{align}\label{eq:Lderivative}
  - \frac{\mathrm d}{\mathrm d \beta}\bar L_{\vec \sigma}(\beta)   =\frac{e^\beta \ucons(\vec \sigma)}{N} =\frac{\cons(\vec \sigma)}{N} - \bar L_{\vec \sigma}(\beta)  \geq \varepsilon - \frac{K}{\sqrt N}  \geq \frac{\varepsilon}{2},
\end{align}
for all $N$ large enough. Therefore, for $\vec \sigma \in T_N$, recalling $\bar L_{\vec \sigma}(\hat \beta^N) = 0$ by the definition of $\hat{\beta}^N$  and \eqref{eq:L_beta_I}, we have for all $N$ large enough, 
\begin{align*}
\frac{K}{\sqrt N} \geq ~|\bar L_{\sigma}(\beta)|   =|\bar L_{\sigma}(\beta)-\bar L_{\sigma}(\hat \beta^N)| & = - \int_{\min\{\beta,  \hat \beta^N\}}^{\max\{\beta, \hat \beta^N\} }\frac{\mathrm d}{\mathrm d \beta} \bar L_{\vec \sigma}(t)\mathrm d t \nonumber \\ 
& \gtrsim \varepsilon  |\hat \beta^N - \beta |,
\end{align*}
where the last inequality uses \eqref{eq:Lderivative}. This implies that with probability at least $1-\delta$, $|\hat \beta^N-\beta|=O_{\delta, \beta}(1/\sqrt N)$. Since $\delta > 0$ is  arbitrary,  this completes the proof of Theorem \ref{thm:beta_estimate}.
\end{proof}

\section{Proof of Proposition \ref{ppn:Hcoloring}}
\label{sec:MPLpf}

We fix a constraint graph $H$, set $q  := |V(H)|$ and fix a finite graph $G_N$ on $N$ vertices. 
We begin by obtaining a convenient expression for the conditional probability
$\P_{\bm \beta}^N (\sigma_u = r|(\sigma_v)_{v \neq u})$ for any $u \in G_N$.

  \begin{lem}
    \label{condprob-Hmodel}
    For any $\bm{\beta} = (\beta_0, \beta_1, \ldots, \beta_{q-1}) \in  \R^q$ with $\beta_0 := 0$
    and $r = 0, 1, \ldots, q-1$, 
    \begin{align}\label{eq:sigma_r}
      \P^N_{\bm \beta}  (\sigma_u=r |(\sigma_v)_{v \ne u})
      &=\frac{e^{\beta_r  + Q_{G_N, H, \vec \sigma}(u, r)}}{\sum_{s=0}^{q-1} e^{\beta_s  + Q_{G_N, H, \vec \sigma}(u, s)}}.  
    \end{align}
    where  $Q_{G_N, H, \vec \sigma}$ is as defined in \eqref{eq:Q_sigma0}. 
  \end{lem}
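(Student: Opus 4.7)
The plan is to verify \eqref{eq:sigma_r} by a direct computation starting from the joint density \eqref{eq:model_H_II}. Writing $\vec \sigma^{(u,r)}$ for the configuration that agrees with $\vec \sigma$ outside coordinate $u$ and assigns color $r$ to vertex $u$, I would expand
\[
\P^N_{\bm \beta}(\sigma_u = r \mid (\sigma_v)_{v \neq u})
  = \frac{\P^N_{\bm \beta}(\vec \sigma^{(u,r)})}{\sum_{s=0}^{q-1} \P^N_{\bm \beta}(\vec \sigma^{(u,s)})}.
\]
The log-partition function $F_{G_N}(\bm \beta)$ cancels between numerator and denominator, and inside $\sum_{s=1}^{q-1}\beta_s\cons_s(\vec \sigma) + \log \newC_{G_N,H}(\vec \sigma)$, every contribution that does not involve the coordinate $u$ also cancels. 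Hence, with the convention $\beta_0 = 0$, the $u$-dependent piece of the linear statistic reduces to the single number $\beta_r$, and what remains is to identify the $u$-dependent piece of $\log \newC_{G_N,H}(\vec \sigma^{(u,r)})$ with $Q_{G_N, H, \vec \sigma}(u, r)$.

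The key step is this identification. Unpacking $\newC_{G_N,H}$, the only factors that depend on the value of $\sigma_u$ are the ones indexed by edges $(u,v) \in E(G_N)$, and among those, a factor equals $0$ exactly when $\sigma_u^{(u,r)} = r$ and $\sigma_v$ form a non-edge $(r, \sigma_v) \in E(\overline H)$; otherwise the factor is $1$. Taking logarithms (with $\log 0 := -\infty$), this is precisely the quantity
\[
\sum_{v \in V(G_N)\setminus\{u\}} \sum_{t=0}^{q-1} a_{rt}(\overline H)\, a_{uv}(G_N) \log(1 - \chi_{\vec \sigma, v}(t)),
\]
which is the definition of $Q_{G_N, H, \vec \sigma}(u, r)$ in \eqref{eq:Q_sigma0}. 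Equivalently, by \eqref{eq:Q_sigma}, this quantity is $0$ when $\vec \sigma^{(u,r)}$ is still a valid $H$-coloring (i.e. $\vec \sigma_{\cN_{G_N}(u)} \subseteq \cN_H(r)$) and $-\infty$ otherwise.

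Combining the two identifications, the numerator equals $e^{\beta_r + Q_{G_N, H, \vec \sigma}(u, r)}$ (up to a multiplicative $u$-independent factor), and the denominator is the same sum over $s$, with the $u$-independent factor cancelling. This yields \eqref{eq:sigma_r} after using the conventions $e^{-\infty} = 0$ and $e^0 = 1$ to absorb the indicator of validity into the exponential.

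The only delicate point is bookkeeping: one must check that every factor in $\newC_{G_N,H}(\vec \sigma^{(u,r)})$ which does not involve $u$ is the same in the numerator and every denominator term (so that it cancels), while the factors that do involve $u$ are summed precisely by the double sum defining $Q_{G_N, H, \vec \sigma}(u, r)$. I do not foresee a substantive obstacle beyond this careful matching; there is no probabilistic content past the standard conditional-probability identity and the exponential-family structure of \eqref{eq:model_H_II}.
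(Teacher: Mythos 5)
Your proposal is correct and follows essentially the same route as the paper: write the conditional probability as a ratio of joint probabilities of configurations differing only at $u$, cancel the partition function and all $u$-independent contributions, and identify the $u$-dependent part of $\log \newC_{G_N,H}$ with $Q_{G_N,H,\vec\sigma}(u,r)$ using the conventions $\log 0=-\infty$, $e^{-\infty}=0$. The bookkeeping you flag is exactly the content of the paper's two-line decomposition of the sufficient statistic and of $\log \newC_{G_N,H}$, so there is no gap.
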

  \begin{proof}
    Fix $0 \leq r \leq q-1$. 
    Suppose  $\vec \sigma \sim \P^N_{\bm \beta}$ is such that $\sigma_u=r$ for some $u \in V(G_N)$. Then, in view of the expression for $\P^N_{\bm \beta}$ in \eqref{eq:model_H_II}, recalling from \eqref{chi} that
    $\chi_{\vec \sigma, s}(w) = \bm 1\{\sigma_w =s \}$,  for $w \in V(G_N)$ and $0 \leq s \leq q-1$,
we have 
\begin{align*} 
\sum_{s=1}^{q-1}  \beta_s \cons_s(\vec \sigma) & = \sum_{v \in V(G_N)} \sum_{s=1}^{q-1}  \beta_s \chi_{\vec \sigma, s}(v)    = \sum_{v \in V(G_N) \backslash \{ u \}} \sum_{s=1}^{q-1}  \beta_s \chi_{\vec \sigma, s} (v) + \beta_r .
\end{align*} 
Moreover, when $\sigma_u=r$, by  \eqref{eq:model_H}, it follows  that  
\begin{align*}
\log \mathcal \newC_{G_N, H}(\vec \sigma) =& \sum_{(s, t) \in E(\overline{H})} \sum_{1\leq u \ne v \leq N}  a_{uv}(G_N) \log (1-\chi_{\vec \sigma, u}(s) \chi_{\vec \sigma, v}(t)) \nonumber \\
& =  \frac{1}{2} \sum_{v \in V(G_N) \backslash\{u\}}  \sum_{t=0}^{q-1}  a_{rt}(\overline H) a_{uv}(G_N) \log (1- \chi_{\vec \sigma, v}(t))  + S ((\sigma_v)_{v \ne u}) , 
\end{align*}
where $S$ is a  function that depends only on the coordinates $(\sigma_v)_{v \ne u}$.
This implies 
\begin{align*}
\P^N_{\bm \beta}(\sigma_u=r |(\sigma_v)_{v \ne u}) & =  \frac{\P^N_{\bm \beta} (\sigma_u = r, (\sigma_v)_{v \in u})}{\sum_{s=0}^{q-1} \P^N_{\bm \beta} (\sigma_u = s, (\sigma_v)_{v \in u})}  =\frac{e^{\beta_r  + Q_{G_N, H, \vec \sigma}(u, r)}}{\sum_{s=0}^{q-1} e^{\beta_s  + Q_{G_N, H, \vec \sigma}(u, s)}}, 
\end{align*} 
recalling the definition of $Q_{G_N, H, \vec{\sigma}}(\cdot, \cdot)$ from \eqref{eq:Q_sigma}.  This proves
\eqref{eq:sigma_r}. 
    \end{proof}

  The proof of Proposition \ref{ppn:Hcoloring} is  now an easy consequence of the lemma.
  Indeed, it  only remains to show that
  \begin{equation}
   \label{eq:mple_equation_HIII}   L_{\vec \sigma}^N(b) = \frac{1}{N}  \nabla M_{\vec \sigma}^N (b),
    \end{equation}
  where 
  \begin{equation}
    \label{def:logpl}
    M_{\vec \sigma}^N (b) := \log \prod_{u=1}^N \P_{\bm b}^N (\sigma_u|(\sigma_v)_{v \neq  u}).
  \end{equation}
  
\begin{proof}[Proof of Proposition \ref{ppn:Hcoloring}]
  In view of Definition \ref{def:MPL} and   Remark \ref{rem-mplabuse}
the MPL estimate of the parameter $\bm \beta$ must be a critical point 
of the log pseudo-likelihood
function $M^N_{\vec \sigma}(\bm b)$.  From \eqref{def:logpl} and  \eqref{eq:sigma_r},  it follows that 
\begin{align}\label{eq:mple_function}
  M^N_{\vec \sigma}(\bm b)
 =  \sum_{u=1}^N \Bigg\{ b_{\sigma_u}+ Q_{G_N, H, \vec \sigma}(u, r)  -  \log\left( \sum_{s=0}^{q-1} e^{b_s + Q_{G_N, H, \vec \sigma}(u, s)} \right) \Bigg\},  
\end{align}
where we recall $b_0=0$. To   show \eqref{eq:mple_equation_HIII} note that 
 for $1 \leq r \leq q-1$, using \eqref{eq:mple_function} and  the fact that  \eqref{eq:Q_sigma} implies 
$Q_{G_N, H, \vec \sigma}(u, r)  \in \{0,-\infty\}$,  we have  
 \begin{align*}
   \frac{1}{N}\frac{\partial}{\partial b_r}M^N_{\vec \sigma}(\bm b)    &=\frac{1}{N}\sum_{u=1}^N \left\{ \bm 1\{\sigma_u =r \} -  \frac{  e^{b_r + Q_{G_N, H, \vec \sigma}(u, r)} }{ \sum_{s=0}^{q-1} e^{b_s + Q_{G_N, H, \vec \sigma}(u, s)}} \right\}
  \notag \\
  &= \frac{1}{N} \cons_r(\vec \sigma)   - \frac{1}{N} \sum_{u=1}^N \left\{ \frac{  e^{b_r} \bm 1\{Q_{G_N, H, \vec \sigma}(u, r)=0\} }{  \sum_{s=0}^{q-1} e^{b_s} \bm 1\{Q_{G_N, H, \vec \sigma}(u, s)=0\}} \right\}, 
 \end{align*}
 which coincides with $L^{(r)}_{\vec \sigma}(\bm b)$  by \eqref{eq:mple_equation_HI}.  This proves \eqref{eq:mple_equation_HIII} and concludes the proof. 
 \end{proof}

\section{Proof of Theorem \ref{thm:beta_H}}
\label{sec:pf_beta_H}

This section focuses on 
the proof of Theorem \ref{thm:beta_H}, which  is presented in three parts: In Section \ref{sec:pf_gradient_H} below we show the concentration of the gradient of the log pseudo-likelihood. The strong concavity of the Hessian of the log pseudo-likelihood is shown in Section \ref{sec:pf_Hessian_H}. These estimates are used to complete the proof of the theorem in Section \ref{sec:pf_H}. Throughout, for notational simplicity, we will just write
  $L_{\vec{\sigma}}$  and $M_{\vec{\sigma}}$ for $L^N_{\vec{\sigma}}$ and $M^N_{\vec{\sigma}}$, respectively.

\subsection{Concentration of the Gradient}
\label{sec:pf_gradient_H}

The first step in the proof of Theorem \ref{thm:beta_H} is to show that the gradient of the log pseudo-likelihood $\bm L_{\vec \sigma}(\bm \beta):=(L^{(1)}_{\vec \sigma}(\bm \beta),  \ldots, L^{(q-1)}_{\vec \sigma}(\bm \beta))'$, as defined in \eqref{eq:mple_equation_HII},   concentrates about zero at the true parameter value $\bm \beta$. 

\begin{lem}\label{lm:2moment_H} Let $\bm L_{\vec \sigma}(\cdot)$ be as in \eqref{eq:mple_equation_HI}. Then under the assumptions of Theorem \ref{thm:beta_H},  
$$\E^N_{\bm \beta}\left[|| \bm L_{\vec \sigma}(\bm \beta)||^2_2\right] \lesssim_{\bm \beta} \frac{1}{N}.$$
\end{lem}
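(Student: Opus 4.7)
The plan is to adapt the exchangeable pairs argument used in the proof of Lemma \ref{lm:2moment} for the hardcore case, working component-by-component and then summing over $r \in \{1,\ldots,q-1\}$. Fix $r$ and write $f_r(\vec\sigma) := L^{(r)}_{\vec\sigma}(\bm\beta)$. As in the hardcore proof, construct an exchangeable pair $(\vec\sigma, \vec\sigma')$ by picking a coordinate $I$ uniformly at random from $\{1,\ldots,N\}$ and resampling $\sigma_I$ from its conditional law given $(\sigma_v)_{v\neq I}$ under $\P^N_{\bm\beta}$. Let $F_r(\vec\sigma, \vec\sigma') := \frac{1}{N}\sum_{i=1}^N (\bm 1\{\sigma_i=r\} - \bm 1\{\sigma_i'=r\})$, which is anti-symmetric. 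Using Lemma \ref{condprob-Hmodel} together with the characterization \eqref{eq:mple_equation_HI}, one checks that
\[
\E_{\bm\beta}[F_r(\vec\sigma, \vec\sigma')\mid \vec\sigma] = \frac{1}{N}\sum_{u=1}^N\bigl(\bm 1\{\sigma_u=r\} - \P^N_{\bm\beta}(\sigma_u=r\mid(\sigma_v)_{v\neq u})\bigr) = f_r(\vec\sigma).
\]

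By the standard exchangeable pairs identity (multiplying by $f_r(\vec\sigma)$, taking expectation, and exploiting anti-symmetry) I obtain
\[
\E_{\bm\beta}[f_r(\vec\sigma)^2] = \tfrac{1}{2}\E_{\bm\beta}\bigl[(f_r(\vec\sigma)-f_r(\vec\sigma'))F_r(\vec\sigma, \vec\sigma')\bigr].
\]
Since $|F_r(\vec\sigma, \vec\sigma')| \leq \frac{2}{N}\bm 1\{\sigma_I\neq \sigma_I'\}$, the task reduces to controlling $|f_r(\vec\sigma)-f_r(\vec\sigma_{(J)})|$, where $\vec\sigma_{(J)}$ is the configuration obtained by changing only the $J$-th coordinate (to any other admissible color). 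The first term of $f_r$, namely $\frac{1}{N}\cons_r(\vec\sigma)$, changes by at most $\frac{1}{N}$. The second term in \eqref{eq:mple_equation_HI} is a sum over $u\in V(G_N)$ of quantities depending only on $\vec\sigma_{\cN_{G_N}(u)}$ (through the indicators $\bm 1\{Q_{G_N,H,\vec\sigma}(u,s)=0\}$ by \eqref{eq:Q_sigma}); hence only summands corresponding to $u\in \{J\}\cup \cN_{G_N}(J)$ can change, each by at most $O_{\bm\beta}(1/N)$. Consequently
\[
|f_r(\vec\sigma) - f_r(\vec\sigma_{(J)})| \lesssim_{\bm\beta} \frac{1+d_J(G_N)}{N}.
\]

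Plugging these bounds into the exchangeable-pair identity, conditioning on $\vec\sigma$ and on $I$, and using that the resampling probability is bounded by $1$, I get
\[
\E_{\bm\beta}[f_r(\vec\sigma)^2] \lesssim_{\bm\beta} \frac{1}{N^2}\sum_{J=1}^N (1+d_J(G_N)) \lesssim_{\bm\beta} \frac{1}{N},
\]
by the bounded average degree assumption \eqref{deg-cond}. Summing over $r=1,\ldots,q-1$ yields the claimed bound on $\E^N_{\bm\beta}[\|\bm L_{\vec\sigma}(\bm\beta)\|_2^2]$.

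The main obstacle I anticipate is verifying the bound $|f_r(\vec\sigma)-f_r(\vec\sigma_{(J)})|\lesssim_{\bm\beta}(1+d_J(G_N))/N$ carefully: the presence of hard constraints means that individual summands in \eqref{eq:mple_equation_HI} can switch discontinuously between their $\{0,-\infty\}$ values of $Q_{G_N,H,\vec\sigma}(u,s)$, so even though only $u\in\{J\}\cup\cN_{G_N}(J)$ are affected, one has to ensure that the resulting change in the ratio $e^{b_r}\bm 1\{\cdot\}/\sum_s e^{b_s}\bm 1\{\cdot\}$ is uniformly bounded (this uses that the denominator contains the term for $s=\sigma_u$, which is always nonzero, so the ratio stays in $[0,1]$). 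Once this local-perturbation estimate is secured, the rest of the argument mirrors the hardcore case with essentially notational changes to accommodate the $(q-1)$-dimensional parameter.
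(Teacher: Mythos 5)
Your proposal follows essentially the same route as the paper's proof of this lemma: the same exchangeable pair, the same anti-symmetry identity applied componentwise, and the same key locality estimate that flipping $\sigma_J$ perturbs only the summands indexed by $\{J\}\cup\cN_{G_N}(J)$ (each ratio staying in $[0,1]$ because the $s=\sigma_u$ term keeps the denominator nonzero), so the bounded-average-degree hypothesis closes the argument exactly as in Lemma \ref{lm:2moment}. The only slip is a harmless $1/N$ bookkeeping inconsistency: if $F_r$ carries the $\tfrac{1}{N}$ prefactor then $\E_{\bm\beta}[F_r\mid\vec\sigma]=\tfrac{1}{N}f_r(\vec\sigma)$ rather than $f_r(\vec\sigma)$, so you should drop that prefactor (as the paper implicitly does in \eqref{F-rel}) to make the identity, the bound $|F_r|\leq 1$, and your final display $\tfrac{1}{N^2}\sum_J(1+d_J(G_N))$ mutually consistent.
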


\begin{proof}
  Fix $N \in \N$, recall $L_{\vec \sigma} =  L_{\vec \sigma}^N$  and also denote $\E_{\bm \beta} = E^N_{\bm \beta}$.  
As in the proof of Lemma \ref{lm:2moment}, for $\vec \tau, \vec \tau^* \in \{0, 1\}^N$, define $\newF(\vec \tau, \vec \tau^*) := \frac{1}{N} \sum_{i=1}^N(\tau_i-\tau_i^*)$. Also, choose a coordinate $I$ uniformly at random and replace the $I$-th coordinate of the vector $\vec \sigma \sim \P^N_{\bm \beta}$ by a sample drawn from the conditional distribution of $\sigma_I$ given $(\sigma_v)_{v \ne I}$, and denote the resulting vector by $\sigma'$. For $0 \leq t \leq q-1$, and $J \in \{1, \ldots, N\}$, define 
$$\vec{\sigma}^{(J, t)}  :=  (\sigma_1, \ldots, \sigma_{J-1}, t, \sigma_{J+1}, \ldots, \sigma_N).$$ Now, fix $0 \leq r \leq q-1$. Denoting $\bm \chi_{\vec \sigma, r}=(\chi_{\vec{\sigma}, w}(r))_{1 \leq w \leq N}$,
where recall  $\chi_{\vec{\sigma}, w}(r) = \bm 1\{\sigma_w =r \}$, note that $\newF(\bm \chi_{\vec \sigma, r}, \bm \chi_{\vec \sigma', r})= \chi_{\vec{\sigma}, I}(r) - \chi_{\vec{\sigma}', I}(r)$. Therefore, using \eqref{eq:sigma_r}, \eqref{eq:count_sigma} and \eqref{eq:mple_equation_HI} in the second and third equalities below, we have 
\begin{align}\label{eq:mple_solution_H}
 f(\vec \sigma) & :=\E_{\bm \beta} \left[ \newF(\bm \chi_{\vec \sigma, r}, \bm \chi_{\vec \sigma', r}) | \vec \sigma \right] \nonumber \\  
  & =\frac{1}{N}  \sum_{u=1}^N \left\{ \chi_{\vec \sigma, u}(r)  - \E_{\bm \beta}
  \left[  \chi_{\vec \sigma, u}(r)  |(\sigma_v)_{v \ne u}\right]\right\} \nonumber \\
&=\frac{1}{N}\sum_{u=1}^N \left\{ \bm 1\{\sigma_u =r \} -  \frac{  e^{\beta_r + Q_{G_N, H, \vec \sigma}(u, r)} }{ \sum_{s=0}^{q-1} e^{\beta_s + Q_{G_N, H, \vec \sigma}(u, s)}} \right\} \nonumber \\ 
&= L^{(r)}_{\vec \sigma}(\bm \beta). 
\end{align}
First, multiplying both sides  by $f(\vec \sigma) =L^{(r)}_{\vec \sigma}(\bm \beta)$, 
 next taking expectations, and then using the exchangeability of $(\vec \sigma, \vec \sigma')$
and the antisymmetry of $\newF$, it follows that 
\begin{align}\label{eq:mple_H_I}
   \E_{\bm \beta}\left[L^{(r)}_{\vec \sigma}(\bm \beta)^2\right] =\E_{\bm \beta}\left[f(\vec \sigma)^2\right]  =\frac{\E_{\bm \beta}[(f(\vec \sigma)-f(\vec \sigma')) \newF(\bm \chi_{\vec \sigma, r}, \bm \chi_{\vec \sigma', r})]}{2}.
\end{align}
For $t \in \{0, 1, \ldots, q-1\}$ and $J \in \{1, \ldots, N\}$, 
define
\[ p_{J, t}(\vec \kappa):=\P^N_{\bm \beta}(\sigma'_J= t |\vec \sigma=\vec \kappa, I=J).\]
Then  
\begin{align}\label{eq:mple_H_II}
 \E_{\bm \beta}\left[(f(\vec \sigma)-f(\vec \sigma')) \newF(\bm \chi_{\vec \sigma, r}, \bm \chi_{\vec \sigma', r}) |\vec \sigma\right] &=\frac{1}{N}  \sum_{J=1}^N \sum_{t=0}^{q-1}(f(\vec \sigma)-f(\vec{\sigma}^{(J, t)})) \newF(\bm \chi_{\vec \sigma, r},  \bm \chi_{\vec{\sigma}^{(J, t)}, J}(r)) p_{L, t}(\vec \sigma) \nonumber \\ 
&=\frac{1}{N}  \sum_{J=1}^N  \sum_{t=0}^{q-1} (f(\vec \sigma)-f(\vec{\sigma}^{(J, t)}))  \bar{\chi}_{\vec \sigma, J, t}(r) p_{J, t}(\vec \sigma) ,
\end{align}
where $\bar{\chi}_{\vec \sigma, J, t}(r) := \chi_{\vec \sigma, J}(r) - \chi_{\vec{\sigma}^{(L, t)}, J}(r)  \in \{-1, 1\}$.

Now, recalling from  \eqref{eq:Q_sigma} that $Q_{G_N, H, \vec \sigma}(u, s) \in \{0, -\infty\}$,
we can rewrite \eqref{eq:mple_solution_H} as 
\begin{align*}
f(\vec \sigma) & =L^{(r)}_{\vec \sigma}(\bm \beta) \nonumber \\ 
&= \frac{1}{N} \sum_{u=1}^N \chi_{\vec \sigma, u}(r)  -  \frac{1}{N} \sum_{u=1}^N \left\{ \frac{  e^{\beta_r} \bm 1\{Q_{G_N, H, \vec \sigma}(u, r)=0\} }{  \sum_{s=0}^{q-1} e^{\beta_s} \bm 1\{Q_{G_N, H, \vec \sigma}(u, s)=0\}} \right\}.
\end{align*}
Noting that $\sum_{u=1}^N \chi_{\vec \sigma, u}(r) - \sum_{u=1}^N \chi_{\vec{\sigma}^{(J, t)}, u}(r)=\bar{\chi}_{\vec \sigma, J, t}(r)$, this implies 
\begin{align}\label{eq:mple_H_III}
f(\vec \sigma)-f(\vec \sigma^{(J, t)}) =\frac{1}{N} \left\{\bar{\chi}_{\vec \sigma, J, t}(r) -  (\theta_{\vec \sigma}(r) - \theta_{\vec \sigma^{(J, t)}}(r) ) \right\}, 
\end{align} 
where 
\begin{align}\label{eq:theta_sigma}
\theta_{\vec \sigma}(r) :=\sum_{u=1}^N \left\{ \frac{  e^{\beta_r} \bm 1\{Q_{G_N, H, \vec \sigma}(u, r)=0\} }{  \sum_{s=0}^{q-1} e^{\beta_s} \bm 1\{Q_{G_N, H, \vec \sigma}(u, s)=0\}} \right\}.
\end{align}

Combining \eqref{eq:mple_H_I}, \eqref{eq:mple_H_II}, and \eqref{eq:mple_H_III}, we obtain 
\begin{align}\label{eq:T12_H}
\E_{\bm \beta}[f(\vec \sigma)^2] & = \tfrac{1}{2}(T_1 - T_2) , 
\end{align}
where, for $i = 1, 2,$ $T_i = T_i^N(\vec \sigma)$ is given by 
\begin{align*} 
T_1 & := \E_{\bm \beta}\left[ \frac{1}{N^2}\sum_{J=1}^N  \sum_{t=0}^{q-1} \bar{\chi}_{\vec \sigma, J, t}(r)^2 p_{J, t}(\vec \sigma) \right] \nonumber \\ 
T_2 & :=  \E_{\bm \beta}\left[ \frac{1}{N^2} \sum_{J=1}^N   \sum_{t=0}^{q-1} (\theta_{\vec \sigma}(r) - \theta_{\vec \sigma^{(J, t)}}(r)) \bar{\chi}_{\vec \sigma, J, t}(r) p_{J, t}(\vec \sigma) \right] . 
\end{align*}
Now, using $\bar{\chi}_{\vec \sigma, J, t}(r)^2=1$ and $p_{J, t}(\vec \sigma) \leq 1$, gives $$T_1 \lesssim_{q}   \frac{1}{N}.$$ Moreover, note from  \eqref{eq:Q_sigma} that if $u \notin \cN_{G_N}(J)$, then $\bm 1\{Q_{G_N, H, \vec \sigma}(u, t)=0\} = \bm 1\{Q_{G_N, H, \vec \sigma^{(J, t)}}(u, t)=0\}$, for all $0 \leq t \leq q-1$.  This implies $|\theta_{\vec \sigma}(r) - \theta_{\vec \sigma^{(J, t)}}(r)| \lesssim_{\bm \beta, q}  (d_J(G_N)+1)$,  where recall $d_J(G_N)$ is the degree of the vertex $J$ in  $G_N$. This implies, 
$$T_2  \lesssim_{\bm \beta, q}    \frac{1}{N^2} \sum_{J=1}^N d_J(G_N) \lesssim\frac{1}{N}.$$ 
Therefore, from \eqref{eq:T12_H}, $\E_{\bm \beta}[f(\vec \sigma)^2] \lesssim_{\bm \beta, q}  \frac{1}{N}$, completing the proof of the lemma. 
\end{proof}

\subsection{Strong Concavity of the Hessian}
\label{sec:pf_Hessian_H}

We now show the strong concavity of the Hessian of the log-pseudolikelihood function. For this, recall,
from \eqref{eq:mple_equation_HIII} and \eqref{eq:mple_equation_HI} that
$$ \grad M_{\vec \sigma}(\bm b) = N \bm L_{\vec \sigma}(\bm b):=(L^{(1)}_{\vec \sigma}(\bm b),  \ldots, L^{(q-1)}_{\vec \sigma}(\bm b))',$$
and,  for $1 \leq r \leq q-1$,  
\begin{align}\label{eq:mple_equation_gradient} 
   \frac{1}{N} \frac{\partial}{\partial b_r} M_{\vec \sigma} (\bm b) & = L^{(r)}_{\vec \sigma}(\bm b)  \nonumber \\ 
  & = \frac{\cons_r(\vec \sigma)}{N}  -  \frac{1}{N} \sum_{u=1}^N \left\{ \frac{  e^{b_r} \bm 1\{Q_{G_N, H, \vec \sigma}(u, r)=0\} }{  \sum_{s=0}^{q-1} e^{b_s} \bm 1\{Q_{G_N, H, \vec \sigma}(u, s)=0\}} \right\}, 
\end{align} 
with $a_r$ and $Q_{G_N, H, \vec \sigma}$ defined as in \eqref{eq:count_sigma} and \eqref{eq:Q_sigma}, respectively. Then the scaled Hessian matrix  of $\grad M_{\vec \sigma}(\bm b)$ is
\begin{equation*} 
  \grad \bm L_{\vec \sigma}(\bm b) = \left(\left(\frac{1}{N}\frac{\partial^2}{\partial b_r \partial b_s}M_{\vec \sigma}(\bm b)\right)\right)_{1 \leq r, s \leq q-1}.
\end{equation*}
From \eqref{eq:mple_equation_gradient}, it is easy to check that the $r$-th diagonal element of the scaled Hessian matrix is 
\begin{align}\label{eq:LderivativeI}
\frac{1}{N}  \frac{\partial^2}{\partial^2 b_r}  M_{\vec \sigma}(\bm b) = -  \frac{1}{N} \sum_{u=1}^N   \theta_{\vec \sigma}(u, r) \left( 1-  \theta_{\vec \sigma}(u, r)\right), 
\end{align}
where, for $1 \leq r \ne r' \leq q-1$, 
\begin{equation}
  \label{def:thetaur}
  \theta_{\vec \sigma}(u, r)  :=  \frac{ e^{b_r} \bm 1\{Q_{G_N, H, \vec \sigma}(u, r)=0\} }{  \sum_{s=0}^{q-1} e^{b_s} \bm 1\{Q_{G_N, H, \vec \sigma}(u, s)=0\} }.
  \end{equation}
 Note that $\sum_{u=1}^N \theta_{\vec \sigma}(u, r) = \theta_{\vec \sigma}(r)$, with $\theta_{\vec \sigma}(r)$  defined as in \eqref{eq:theta_sigma}.  Moreover, the $(r, r')$-th element is, 
\begin{align}\label{eq:LderivativeII}
\frac{1}{N}\frac{\partial^2}{\partial b_{r'} \partial b_r}  M_{\vec \sigma}(\bm b)  
&=    \frac{1}{N} \sum_{u=1}^N    \theta_{\vec \sigma}(u, r) \theta_{\vec \sigma}(u, r') ,
\end{align}
for $1 \leq r \ne r' \leq q-1$. To write \eqref{eq:LderivativeI} and \eqref{eq:LderivativeII} more compactly, define the $(q-1)$-dimensional vector $\cO_{\vec \sigma}(u) := (\theta_{\vec \sigma}(u, 1), \ldots, \theta_{\vec \sigma}(u, q - 1))^{\top}$. Then, the identities in \eqref{eq:LderivativeI} and \eqref{eq:LderivativeII} can be written more succinctly as  
\begin{align}\label{eq:L_beta}
-\grad \bm L_{\vec \sigma}(\bm b) = \frac{1}{N} \sum_{u=1}^N  \Big\{   \mathrm{diag}(\cO_{\vec \sigma}(u) ) - \cO_{\vec \sigma}(u) \cO_{\vec \sigma}^{\top} (u)  \Big\}, 
\end{align}
where for any matrix $W$, $W^{\top}$ denotes its transpose. Therefore, to show that $M_{\vec \sigma}(\bm b)$ is strongly concave at the true value $\bm b = \bm \beta$ with high probability, it suffices to prove that the minimum eigenvalue $\lambda_{\mathrm{min}}(-\grad \bm L_{\vec \sigma}(\bm \beta) ) $ is strictly positive  with probability going to 1. This is proved in the following lemma:

\begin{lem}\label{lm:condition_H}
Under the assumptions of Theorem \ref{thm:beta_H}, 
$$\lim_{\varepsilon \rightarrow 0}  \lim_{N \rightarrow \infty} \P^N_{\bm \beta}(\lambda_{\mathrm{min}}(-\grad \bm L_{\vec \sigma}(\bm \beta) ) > \varepsilon ) = 1.$$
\end{lem}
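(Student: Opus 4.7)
The plan is to exploit the representation \eqref{eq:L_beta}, which writes $-\grad \bm L_{\vec\sigma}(\bm\beta) = \frac{1}{N}\sum_{u=1}^N \Sigma_u$ as an average of positive semidefinite $(q-1)\times(q-1)$ matrices, where each $\Sigma_u := \mathrm{diag}(\cO_{\vec \sigma}(u))-\cO_{\vec \sigma}(u)\cO_{\vec \sigma}^{\top}(u)$ is the covariance matrix of a one-trial multinomial with probability vector $(\theta_{\vec\sigma}(u,s))_{s=0}^{q-1}$, restricted to the coordinates $1,\ldots,q-1$. The classical variance identity
\[
\bm x^{\top}\Sigma_u\bm x \;=\; \tfrac{1}{2}\sum_{s,s'=0}^{q-1}\theta_{\vec\sigma}(u,s)\,\theta_{\vec\sigma}(u,s')\,(x_s-x_{s'})^2, \qquad x_0 := 0,
\]
confirms $\Sigma_u \succeq 0$ and reduces the lemma to producing, with probability tending to one, a uniform positive lower bound on $\bm x^{\top}(-\grad\bm L_{\vec\sigma}(\bm\beta))\bm x$ over the unit sphere of $\R^{q-1}$. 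The explicit form of the probabilities helps: whenever $s$ lies in the admissible set $A_u := \{s : Q_{G_N, H, \vec\sigma}(u, s) = 0\}$ at $u$, the weight $\theta_{\vec\sigma}(u, s) = e^{\beta_s}/\sum_{t \in A_u} e^{\beta_t}$ is bounded below by a positive constant $c_{\bm\beta}$ depending only on $\bm\beta$ and $q$, since $|A_u| \leq q$. Thus the problem reduces to showing that
\[
\frac{1}{N}\sum_{u=1}^N \sum_{\substack{s,s'\in A_u\\ s\neq s'}}(x_s-x_{s'})^2
\]
is bounded below by a positive constant uniformly in unit $\bm x \in \R^{q-1}$, with probability $1-o(1)$.

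For this I would combine the rainbow condition with the bounded-average-degree hypothesis \eqref{deg-cond}. Setting $U_r := \{u : r \in A_u\}$, the rainbow condition \eqref{eq:sigma_neighborhood} guarantees that $|U_r| \geq \varepsilon N$ with probability tending to one, for every $r \in \{1, \ldots, q-1\}$. Since $\sigma_u \in A_u$ always, for each $u \in U_r$ with $\sigma_u \neq r$ one has $\{r, \sigma_u\} \subseteq A_u$, so that the inner sum picks up the term $(x_r - x_{\sigma_u})^2$. Taking $r^* = r^*(\bm x) := \arg\max_r |x_r|$ so that $|x_{r^*}| \geq 1/\sqrt{q-1}$, the contributions from $u \in U_{r^*}$ will dominate.

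The main obstacle is guaranteeing that these pairwise gaps $(x_{r^*} - x_{\sigma_u})^2$ do not degenerate uniformly in $\bm x$, since $x_{\sigma_u}$ might happen to be close to $x_{r^*}$ for too many vertices. I would handle this by a case analysis: falling back on the pair $\{r^*, 0\}$ (for which $(x_{r^*} - x_0)^2 = x_{r^*}^2 \geq 1/(q-1)$) whenever $0 \in A_u$, and using a pigeon-hole argument over the $q-1$ coordinate values of $\bm x$ to locate, within each $A_u$ of size at least $2$, a pair with non-negligible gap. The bounded-average-degree assumption, through a counting argument analogous to Lemma \ref{lm:M}, ensures that a positive fraction of vertices have degree $O(1)$ and therefore $|A_u| \geq 2$, because too few neighbors cannot rule out every other color; consequently the restriction to vertices with $|A_u| \geq 2$ costs at most an $o(1)$ fraction of $U_{r^*}$. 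A standard $\eta$-net argument on the unit sphere of the finite-dimensional space $\R^{q-1}$, combined with the Lipschitz dependence of the quadratic form on $\bm x$ and a union bound, then upgrades the pointwise lower bound to the required uniform bound on $\lambda_{\min}(-\grad \bm L_{\vec\sigma}(\bm\beta))$, and letting $\varepsilon \to 0$ yields the claimed limit.
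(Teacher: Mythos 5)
Your proposal starts from the same place as the paper---the decomposition $-\grad \bm L_{\vec\sigma}(\bm\beta) = \frac{1}{N}\sum_{u=1}^N\{\mathrm{diag}(\cO_{\vec \sigma}(u)) - \cO_{\vec \sigma}(u)\cO_{\vec \sigma}^{\top}(u)\}$ into multinomial covariances, their positive semidefiniteness, and the lower bound $\theta_{\vec\sigma}(u,s)\ge c_{\bm\beta}$ on the admissible set $A_u$---but the way you extract a spectral lower bound has a genuine gap. The only pairs in $\sum_{s\ne s'\in A_u}(x_s-x_{s'})^2$ for which a gap can be guaranteed uniformly over unit $\bm x$ are pairs containing both $r^*=\arg\max_r|x_r|$ and the color $0$ (since $x_0=0$ and $|x_{r^*}|\ge 1/\sqrt{q-1}$). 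Your fallback ``pigeon-hole argument \dots to locate, within each $A_u$ of size at least $2$, a pair with non-negligible gap'' cannot work: if $A_u=\{s,s'\}$ with $s,s'\ge 1$ and $x_s=x_{s'}$, every pair in $A_u$ has gap exactly zero and the vertex contributes nothing to $\bm x^{\top}(-\grad\bm L_{\vec\sigma}(\bm\beta))\bm x$; no pigeonhole over the coordinate values can manufacture a gap that is not there (consider $\bm x$ constant on $\{1,\dots,q-1\}$). The claim that bounded average degree forces $|A_u|\ge 2$ for most vertices is also false: a single neighbor of $u$ colored $t$ already restricts $A_u$ to $\cN_H(t)$, which can be a singleton (e.g.\ color $3$ in the constraint graph of Figure \ref{fig:H_graph} has $\cN_H(3)=\{2\}$). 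Since the rainbow condition \eqref{eq:sigma_neighborhood} controls only the colors $1,\dots,q-1$ and says nothing about where color $0$ is admissible, your argument never produces a supply of vertices at which the quadratic form is bounded away from zero in the problematic directions. The $\eta$-net step does not help: for fixed $\vec\sigma$ the quadratic form is deterministic in $\bm x$, so there is no event to union-bound over; the failure is in the deterministic uniform lower bound itself.

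The paper takes a different and stronger per-vertex route precisely to avoid this. It isolates the set $\cT_{\varepsilon_0}(\vec\sigma)$ of vertices at which $\theta_{\vec\sigma}(u,r)>\varepsilon_0/2$ for \emph{all} $q$ colors simultaneously (so in particular $0\in A_u$), argues that $|\cT_{\varepsilon_0}(\vec\sigma)|\gtrsim \varepsilon_0 N$ on the rainbow event, and then applies the spectral characterization of the multinomial covariance (Lemma \ref{lm:Lepsilon}, via the secular equation $\sum_{r=1}^{q-1}\theta_{\vec\sigma}(u,r)^2/(\theta_{\vec\sigma}(u,r)-\lambda)=1$) to conclude $\lambda_{\mathrm{min}}\{\mathrm{diag}(\cO_{\vec \sigma}(u))-\cO_{\vec \sigma}(u)\cO_{\vec \sigma}^{\top}(u)\}\ge\varepsilon_0/2$ for every $u\in\cT_{\varepsilon_0}(\vec\sigma)$, finishing with Weyl's inequality and the nonnegativity of the remaining summands. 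The ingredient your argument is missing is exactly this guaranteed positive fraction of vertices at which all colors (including $0$) are admissible; a pairwise analysis confined to the colors $1,\dots,q-1$ cannot be made uniform over the unit sphere.
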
  
\begin{proof}  Fix $\varepsilon > 0$, $1 \leq r \leq q-1$, and define  
$$\cA_{\varepsilon, r} := \left\{ \vec \sigma \in \{0, 1, \ldots, q-1\}^N: \ucons_r(\vec \sigma) > \varepsilon N  \right \},$$
and 
\begin{align}\label{eq:ucons_epsilon}
\cA_{\varepsilon} := \bigcap_{r=1}^{q-1} \cA_{\varepsilon, r}.
\end{align} 
Now, recalling  \eqref{def:thetaur} and  \eqref{eq:Q_sigma}, and using 
$$\sum_{s=0}^{q-1} e^{\beta_s} \bm 1\{Q_{G_N, H, \vec \sigma}(u, s)=0\}   \leq \sum_{s=0}^{q-1} e^{\beta_s},$$ for $1 \leq u \leq N$, gives  
\begin{align}\label{eq:bd_I}
  \sum_{u=1}^N  \theta_{\vec \sigma}(u, r) 
  & =    \sum_{u=1}^N  \frac{ e^{\beta_r} \bm 1\{Q_{G_N, H, \vec \sigma}(u, r)=0\} }{  \sum_{s=0}^{q-1} e^{\beta_s} \bm 1\{Q_{G_N, H, \vec \sigma}(u, s)=0\} } \nonumber \\  
& \geq  \calJ(\bm \beta)  \sum_{u=1}^{N} \bm 1\{  \vec \sigma_{\cN_{G_N}(u)} \subseteq \cN_{H}(r) \} \nonumber \\  
& =  \calJ(\bm \beta)  \ucons_r(\vec \sigma) , 
\end{align} 
where $$\calJ(\bm \beta) :=  \min_{1 \leq r \leq q-1} \left\{ \frac{e^{\beta_r}}{ \sum_{s=0}^{q-1} e^{\beta_s}} \right \} > 0 .$$ Denote $\varepsilon_0 := \varepsilon \calJ(\bm \beta) $. Also, given $\vec\sigma  \in \{0, 1, \ldots, q-1\}^N$, define 
\begin{align}\label{eq:cepsilon}
\cT_{\varepsilon_0}(\vec \sigma) & :=\Big\{u \in V(G_N): \theta_{\vec \sigma}(u, r) > \tfrac{\varepsilon_0}{2},~\forall~ 0 \leq r \leq q-1 \Big\}, 
\end{align}
with $\theta_{\vec \sigma}$ as in  \eqref{def:thetaur}. 
Then, for $\vec \sigma \in \cA_\varepsilon$, using \eqref{eq:bd_I} and the bound $\theta_{\vec \sigma}(u, r)  \leq 1$ on $\cT_{\varepsilon_0}(\vec \sigma)$, we conclude that 
$$\varepsilon_0 N < \sum_{u=1}^N \theta_{\vec \sigma}(u, r) \leq  \tfrac{1}{2} \varepsilon_0 N  + |\cT_{\varepsilon_0} (\vec \sigma) |,$$
which implies $|\cT_{\varepsilon_0} (\vec \sigma) | \geq \frac{\varepsilon_0 N}{2}$. 

Now, recalling  that each of the matrices in the representation of $\grad \bm L_{\vec \sigma}(\bm \beta)$ in  \eqref{eq:L_beta} is symmetric, an application of 
Weyl's inequality yields  
\begin{align} 
  \lambda_{\mathrm{min}}(-\grad \bm L_{\vec \sigma}(\bm \beta) )   & \geq \frac{1}{N} \sum_{u=1}^N  \lambda_{\mathrm{min}} \Big\{   \mathrm{diag}(\cO_{\vec \sigma}(u) ) - \cO_{\vec \sigma}(u) \cO^{\top}_{\vec \sigma}(u)  \Big\}. \label{eq:bd_II_eigenvalue}
\end{align} 
We now invoke the following result: 
\begin{lem}\label{lm:Lepsilon} Let $\cT_{\varepsilon_0}(\vec \sigma)$ be as defined in  \eqref{eq:cepsilon}.  If $u \in \cT_{\varepsilon_0}(\vec \sigma)$, then 
$$\lambda_{\mathrm{min}}\Big\{ \mathrm{diag}(\cO_{\vec \sigma}(u) ) -  \cO_{\vec \sigma}(u) \cO^{\top}_{\vec \sigma}(u)  \Big\} \geq \tfrac{\varepsilon_0}{2}.$$
\end{lem}
Deferring the proof of this lemma to later, we first use  it to  complete the proof of Lemma \ref{lm:condition_H}. 
Note that for $\vec \sigma \in \cA_\varepsilon$, Lemma \ref{lm:Lepsilon}, 
when combined with the fact that the matrix $\mathrm{diag}(\cO_{\vec \sigma}(u) ) - \cO_{\vec \sigma}(u) \cO^{\top}_{\vec \sigma}(u)$
is non-negative definite, for $1 \leq u \leq N$, and the bound $|\cT_{\varepsilon_0} (\vec \sigma) | \geq \frac{\varepsilon_0 N}{2}$,  implies
that 
\begin{align} \label{eq:bd_II}  
  \lambda_{\mathrm{min}}(-\grad \bm L_{\vec \sigma}(\bm \beta) ) 
  & \geq  \frac{1}{N} \sum_{u \in \cT_{\varepsilon_0}(\vec \sigma)}  \lambda_{\mathrm{min}} \Big\{   \mathrm{diag}(\cO_{\vec \sigma}(u) ) - \cO_{\vec \sigma}(u) \cO^{\top}_{\vec \sigma}(u)  \Big\}  
   \nonumber \\ 
  & \geq \frac{\varepsilon_0  | \cT_{\varepsilon_0}(\vec \sigma)| }{2 N }  \nonumber \\ 
  & \geq \frac{\varepsilon_0^2 }{4}. 
  \end{align}  
Moreover,  note that the rainbow condition \eqref{eq:sigma_neighborhood}  
implies that 
$$\lim_{\varepsilon \rightarrow 0} \liminf_{N \rightarrow \infty} \P^N_{\bm \beta}(\cA_{\varepsilon,r}) = 1$$
for all $1 \leq r \leq q-1$, 
and hence, the relation  \eqref{eq:ucons_epsilon} implies
 $$\lim_{\varepsilon \rightarrow 0} \liminf_{N \rightarrow \infty} \P^N_{\bm \beta}(\cA_\varepsilon) = 1.$$
Then, for every $\delta > 0$ we can choose $\varepsilon = \varepsilon(\bm \beta, \delta) >0 $ large enough such that $\P^N_{\bm \beta}(\cA_\varepsilon) > 1-\delta$. Hence,  it follows from \eqref{eq:bd_II} that $$\P^N_{\bm \beta}\left( \lambda_{\mathrm{min}}(-\grad \bm L_{\vec \sigma}(\bm \beta) )  \geq \frac{\varepsilon_0^2}{4} \right) > 1-\delta.$$ This completes the proof of the lemma. 
\end{proof}

To complete the justification of the last proof, we now prove  Lemma \ref{lm:Lepsilon}. 

\begin{proof}[Proof of Lemma \ref{lm:Lepsilon}]  It is immediate from the definition in \eqref{def:thetaur} that 
  $\sum_{r=0}^{q-1} \theta_{\vec \sigma}(u, r)=1$, for $1 \leq u \leq N$. This implies that  for $1 \leq u \leq N$, the matrix $ \mathrm{diag}(\cO_{\vec \sigma}(u) ) -  \cO_{\vec \sigma}(u) \cO^{\top}_{\vec \sigma}(u)$ is the covariance matrix of a multinomial distribution with probability vector $\cO_{\vec \sigma}(u)$. It is well-known \cite[Theorem 2.2]{multinomial_wn} that any  eigenvalue $\lambda$ of the covariance matrix of a multinomial distribution satisfies the equation,  
\begin{align}\label{eq:lambda_u}
\sum_{r=1}^{q-1}\frac{\theta_{\vec \sigma}(u, r)^2}{\theta_{\vec \sigma}(u, r) - \lambda} = 1. 
\end{align} 
This implies, using the bounds $\varepsilon_0/2 \leq \theta_{\vec \sigma}(u, r) \leq 1$, for $u \in \cT_{\varepsilon_0}(\vec \sigma)$, 
$$1= \sum_{r=1}^{q-1}\frac{\theta_{\vec \sigma}(u, r)^2}{\theta_{\vec \sigma}(u, r) - \lambda} \leq  \frac{1}{\frac{\varepsilon_0}{2} - \lambda} \sum_{r=1}^{q-1} \theta_{\vec \sigma}(u, r) <  \frac{1}{\frac{\varepsilon_0}{2} - \lambda},$$
that is, $\lambda > \varepsilon_0/2$. This completes the proof. 
\end{proof}

\subsection{Completing the Proof of Theorem \ref{thm:beta_H}}
\label{sec:pf_H}

Equipped with Lemmas \ref{lm:2moment_H} and \ref{lm:condition_H}, the proof of $\sqrt N$-consistency can be completed as follows: For $t \in (0, 1)$, define 
$$ \bm \beta^N_t := t \bm {\hat \beta}^N + (1- t) \bm \beta \quad \mbox{ and } \quad h_N(t) := (\hat{\bm \beta}^N-\bm \beta)^\top  \bm L_{\vec \sigma}(\bm \beta_t), 
$$ 
where $L_{\vec \sigma} = L_{\vec \sigma}$ is as defined in \eqref{eq:mple_equation_HII}-\eqref{eq:mple_equation_HI}. Then note that for $t \in (0, 1)$,
\begin{align*}
-h'_N(t) & = -(\hat{\bm \beta}^N-\bm \beta)^\top \grad L_{\vec \sigma}(\bm \beta^N_t) (\hat{\bm \beta}^N-\bm \beta) \nonumber \\ 
& \geq \lambda_\mathrm{min}(- \grad \bm L_{\vec \sigma}(\bm \beta^N_t)) ||\hat{\bm \beta}^N-\bm \beta||_2^2 \nonumber \\ 
& \geq 0, 
\end{align*} 
where the last inequality holds because by \eqref{eq:bd_II_eigenvalue}, 
$\lambda_\mathrm{min}(- \grad \bm L_{\vec \sigma}(\tilde{\bm \beta})) \geq 0$, for all $\tilde{\bm \beta} \in \R^{q-1}$, since the matrix $\mathrm{diag}(\cO_{\vec \sigma}(u) ) - \cO_{\vec \sigma}(u) \cO^{\top}_{\vec \sigma}(u)$ is non-negative definite, for $1 \leq u \leq N$. 
Now, define $Z_N := ||\hat{\bm \beta}^N-\bm \beta||_2 $.  Then, for $K \in (0,\infty)$ fixed, by the Cauchy-Schwarz inequality, and the
fact that $\bm L_{\vec \sigma}(\hat{\bm \beta}^N) = 0$, 
\begin{align} 
Z_N \cdot || \bm L_{\vec \sigma}(\bm \beta)||_2   = ||\hat{\bm \beta}^N-\bm \beta||_2 \cdot || \bm L_{\vec \sigma}(\bm \beta)||_2 & \geq |(\hat{\bm \beta}^N-\bm \beta)^\top \bm L_{\vec \sigma}(\bm \beta)| \nonumber \\
&=|h_N(1)-h_N(0)|\\ 
&=\left|-\int_0^1 h_N'(t) \mathrm dt \right|  \nonumber \\ 
& \geq \left|-\int_{0}^{\min\{1, \frac{K}{Z_N} \} } h_N'(t) \mathrm dt \right|  \nonumber \\ 
\label{eq:beta_Lsigma} & \geq  Z_N^2 \int_{0}^{\min\{1, \frac{K}{Z_N} \} }   \lambda_\mathrm{min}(- \grad \bm L_{\vec \sigma}(\bm \beta_t)) \mathrm dt . 
\end{align} 

Next, for constants $\varepsilon > 0$ and $C < \infty$, define, 
\begin{align*} 
T_{N, C, \varepsilon} & = \Bigg \{\vec \sigma: || \bm L_{\vec \sigma}(\bm \beta)||_2 \leq \frac{C}{\sqrt N}  \text{ and }  \min_{\bm b: |||\bm b-\bm \beta||_2 \leq K}\lambda_\mathrm{min}(- \grad \bm L_{\vec \sigma}(\bm b)) \geq \varepsilon  \Bigg \}.
\end{align*} 
Then by Lemma \ref{lm:2moment_H} and Lemma \ref{lm:condition_H}, for $0 < \delta < 1$ fixed, there exist 
$\varepsilon := \varepsilon(\delta, \bm \beta) > 0$ and  $C := C(\delta, \bm \beta) <  \infty$ such that $\P^N_{\bm \beta }(T_{N, C, \varepsilon}) \geq 1-\delta$.  Moreover,  since $||\bm \beta^N_t - \bm \beta ||_2 = t ||\hat{\bm \beta}^N - \bm \beta ||_2 \leq K$, for $\vec \sigma \in T_{N,C,\varepsilon}$, 
$$\int_{0}^{\min\{1, \frac{K}{Z_N} \} }   \lambda_\mathrm{min}(- \grad \bm L_{\vec \sigma}(\bm \beta^N_t)) \mathrm dt \geq \varepsilon \min\left\{1, \frac{K}{Z_N} \right\}.$$ 
When combined with \eqref{eq:beta_Lsigma}, and the definition of $T_{N,C,\varepsilon}$, this implies 
\begin{align}\label{eq:estimate_difference}
\min\left\{Z_N, K \right\} = Z_N  \min\left\{1, \frac{K}{Z_N} \right\} \leq \frac{C}{\varepsilon \sqrt N}. 
\end{align} 
Since $K$ is fixed, this shows $Z_N$ converges in probability to zero. Therefore, for $N$ large enough with probability $1-\delta$, we have $\min\left\{Z_N, K \right\}= Z_N$. Hence, \eqref{eq:estimate_difference} shows that $Z_N=  ||\hat{\bm \beta}-\bm \beta||_2  = O_{\delta, \bm \beta}(1/\sqrt N)$ with probability at least $1- 2 \delta$. This completes the proof of Theorem \ref{thm:beta_H}.  \hfill $\Box$ \\

\begin{remark}\label{remark:estimate}  {\em (Computation of the MPL estimate) The MPL estimate of $\bm \beta$ in an $H$-coloring model can be computed using a simple gradient descent algorithm. To this end,  recall that the goal is to minimize the function $-M_{\vec \sigma}(\bm b)$, as defined in \eqref{eq:mple_function},  over $\bm b = (b_1, b_2, \ldots, b_{q-1})' \in \R^{q-1}$. Hence, starting with some $\bm \beta^{(1)} = (b_1^{(1)}, b_2^{(1)}, \ldots, b_{q-1}^{(1)})'$, define, for $t \geq 1$, the $(t+1)$-th step of the gradient descent algorithm with step size $\gamma$ as, 
\begin{align}\label{eq:beta_gd}
\bm \beta^{(t+1)} := \bm \beta^{(t)} + \gamma  \grad \left(\tfrac{1}{N}M_{\vec \sigma}(\bm b) \right) \Big|_{\bm b =  \bm \beta^{(t)}}. 
\end{align} 
To show the convergence of this algorithm it suffices to prove that the function $-\frac{1}{N}\grad M_{\vec \sigma}(\bm b) = -  \bm L_{\vec \sigma}(\bm b)$ is Lipschitz. For this, note from \eqref{eq:L_beta} and  Weyl's inequality, that with $\lambda_{\mathrm{max}}$ denoting  the largest eigenvalue,  
\begin{align*}
\lambda_{\mathrm{max}}(-\grad \bm L_{\vec \sigma}(\bm b) )  \leq  \frac{1}{N} \sum_{u=1}^N  \lambda_{\mathrm{max}} \Big\{   \mathrm{diag}(\cO_{\vec \sigma}(u) ) - \cO_{\vec \sigma}(u) \cO_{\vec \sigma}(u)'  \Big\}. 
\end{align*} 
Now, recalling $\cO_{\vec \sigma}(u) = (\theta_{\vec \sigma}(u, 1), \ldots, \theta_{\vec \sigma}(u, q-1))^{\top}$, the definition in \eqref{def:thetaur}, and using the bound $\theta_{\vec \sigma}(u, r) \leq 1$ in \eqref{eq:lambda_u} gives,
\begin{align*} 
 \lambda_{\mathrm{max}} \Big\{   \mathrm{diag}(\cO_{\vec \sigma}(u) ) - \cO_{\vec \sigma}(u) \cO^{\top}_{\vec \sigma}(u)  \Big\} & \leq 1- \sum_{r=1}^{q-1} \theta_{\vec \sigma}(u, r)^2 \nonumber \\  
 & < 1.
\end{align*} 
Denote by $\grad^2 M _{\vec \sigma}(\bm b)$ the Hessian matrix of $M _{\vec \sigma}(\bm b)$. Then, the last two displays together imply $\lambda_{\mathrm{max}}(-  \frac{1}{N}\grad^2 M _{\vec \sigma}(\bm b) ) = \lambda_{\mathrm{max}}(-  \grad \bm L_{\vec \sigma}(\bm b) ) \leq 1$, for all $\bm b \in \R^{q-1}$, which shows that the function $-\grad M_{\vec \sigma}(\bm b)$ is 1-Lipschitz. 

Moreover, by Lemma \ref{lm:condition_H}, for every $0 < \delta < 1$ there exists an $\varepsilon:=\varepsilon (\delta, \bm \beta) > 0$ such that the function $- \frac{1}{N} M_{\vec \sigma}(\bm b)$ is $\varepsilon$-strongly convex with probability at least $1-\delta/2$. Therefore, using the gradient descent algorithm \eqref{eq:beta_gd} with step size $\gamma=1$ gives, by \cite[Theorem 3.10]{optimization_notes},
$$ ||\bm \beta^{(t+1)} - \hat{\bm \beta}^N ||_2    \leq  e^{-\varepsilon t} ||\bm \beta^{(1)} - \hat{\bm \beta}^N ||_2 ,$$ 
probability at least $1-\delta$. Hence, after $t \geq \frac{\log (N ||\bm \beta^{(1)} - \hat{\bm \beta}^N ||_2)}{\varepsilon}$ steps, 
$$||\bm \beta^{(t+1)} - \hat{\bm \beta}^N ||_2 \leq  \frac{1}{\sqrt N},$$ with probability at least $1-\delta$. 
Hence, by Theorem \ref{thm:beta_H}, after $t=O_{\bm \beta, \bm \beta^{(1)}}(\log N)$ steps of the gradient descent algorithm, $ ||\bm \beta^{(t)} -  \bm \beta ||_2 = O(1/\sqrt N)$ with probability at least $1-\delta$. } 
\end{remark}

\section{Proof of Corollary \ref{cor:unconstrained_H}} 
\label{sec:pfcorollary}

In this section we prove Corollary \ref{cor:unconstrained_H}. We begin in
Section \ref{sec:pf_expectation} with a useful sufficient condition for \eqref{eq:sigma_neighborhood}, which is expressed in terms of the expected number of vertices in $G_N$ with color $r$ (recall Remark \ref{remark1}). We then use this result to prove Corollary \ref{cor:unconstrained_H} in Section \ref{sec:pfunconstrained_H}.

\subsection{A Useful Sufficient Condition}
\label{sec:pf_expectation}

For $0 \leq r \leq q-1$, recall that $\cons_r(\vec \sigma) = \sum_{u=1}^N \bm 1\{\sigma_u = r\}$ counts the number of vertices with color $r$. The following proposition shows that the rainbow condition \eqref{eq:sigma_neighborhood} holds, whenever $\cons_r(\vec \sigma)$,  is $\Theta(N)$ with high probability in a neighborhood of $\bm \beta$. To this end, for $\delta > 0$, denote by $\bm \beta_{\delta, r} := (\beta_1, \ldots, \beta_{r-1}, \beta_r-\delta,  \beta_{r+1}, \ldots, \beta_{q-1})$.

\begin{ppn}\label{ppn:expectation} Fix $q \geq 2$, $\bm \beta = (\beta_1, \beta_2, \ldots, \beta_{q-1})' \in \R^{q-1}$, and a connected constraint graph $H$. Given a  graph $G_N$ on $N$ vertices, for $N \in \mathbb{N}$,
  such that the sequence $\{G_N\}_{N \in \N}$  has  uniformly
  bounded maximum degree, and $\vec \sigma \sim \P^N_{\bm \beta}$ from the
  $H$-coloring model \eqref{eq:model_H}, assume that,  for some $1 \leq r \leq q-1$, 
  \begin{align}
    \label{suff:cond}
    \lim_{\delta \rightarrow 0} \liminf_{N \rightarrow \infty} \frac{1}{N} \E^N_{\bm \beta_{\delta, r}}[\cons_r(\vec \sigma)] > 0. 
  \end{align}
  Then  
\begin{align}\label{eq:prob_ar}
\lim_{\varepsilon \rightarrow 0} \limsup_{N\rightarrow\infty}\P^N_{\bm \beta}(\cons_r(\vec \sigma) \leq \varepsilon N)=0. 
\end{align}
This implies \eqref{eq:sigma_neighborhood} holds whenever \eqref{suff:cond} does for all $1 \leq r \leq q-1$. 
\end{ppn}

\begin{proof} Recall the definition of the log-partition function for the $H$-coloring model from \eqref{eq:model_H_II}. Then note that 
\begin{align*} 
F_{G_N}^{(r)} (\bm \beta) & : = \frac{\partial}{\partial \beta_r} F_{G_N}(\bm \beta) 
=\E^N_{\bm \beta}[\cons_r(\vec \sigma)]. 
\end{align*} 
Thus, for any $\delta, \varepsilon >0$, using Jensen's inequality, 
\begin{align}\label{eq:csigma_probability}
\P^N_{\bm \beta}(\cons_r(\vec \sigma)<\varepsilon N)  =\P^N_{\beta}(e^{- \delta \cons_r(\vec \sigma)}>e^{- \delta \varepsilon N}) 
& \le e^{ \delta \varepsilon N} \E^N_{\bm \beta} [e^{-\delta \cons_r(\vec \sigma)}]  \nonumber \\ 
& \le e^{ \delta \varepsilon N+F_{G_N}(\bm \beta-\delta \bm e_r)-F_{G_N}(\bm \beta)},
\end{align} 
where $\bm e_r$ is the $r$-th basis vector in $\R^{q-1}$.  Note that the function $t \mapsto F_{G_N}^{(r)} (\beta_1, \ldots, \beta_{r-1}, t,  \beta_{r+1}, \ldots, \beta_{q-1})$ is monotone, since 
\begin{align*} 
 \frac{\partial }{\partial t} F_{G_N}^{(r)} (\beta_1, \ldots, \beta_{r-1}, t,  \beta_{r+1}, \ldots, \beta_{q-1})  & = \Var_{(\beta_1, \ldots, \beta_{r-1}, t, \beta_{r+1}, \ldots, \beta_{q-1})}^N[\cons_r(\vec \sigma)] \nonumber \\ 
& \geq 0.
\end{align*} 
This implies, taking logarithms on both sides in \eqref{eq:csigma_probability}, 
\begin{align*}
\log \P^N_{\bm \beta}(\cons_r(\vec \sigma)<\varepsilon N)  & \leq \delta \varepsilon N+F_{G_N}(\bm \beta-\delta \bm e_r)-F_{G_N}(\bm \beta) \\ 
& =  \delta \varepsilon N-\int_{\beta_r-\delta}^{\beta_r}F_{G_N}^{(r)} (\beta_1, \ldots, \beta_{r-1}, t,  \beta_{r+1}, \ldots, \beta_{q-1})\mathrm dt \nonumber \\ 
& \le  \delta \varepsilon N - \delta F_{G_N}^{(r)} (\beta_1, \ldots, \beta_{r-1}, \beta_r-\delta,  \beta_{r+1}, \ldots, \beta_{q-1}), 
\end{align*} 
where the last step uses the monotonicity of the function $t \mapsto F_{G_N}^{(r)} (\beta_1, \ldots, \beta_{r-1}, t,  \beta_{r+1}, \ldots, \beta_{q-1})$. Dividing both sides by $N$ and taking first the limit superior  as $N\rightarrow \infty$ and then the limit
  as $\varepsilon \rightarrow 0$, 
\begin{align*}
 \lim_{\varepsilon \rightarrow 0}\limsup_{N\rightarrow\infty}  \frac{1}{N}\log \P^N_{\bm \beta}(\cons_r(\vec \sigma)<\varepsilon N) 
& \le   - \delta \liminf_{N\rightarrow\infty}\frac{1}{N}{F_{G_N}^{(r)} (\beta_1, \ldots, \beta_{r-1}, \beta_r-\delta,  \beta_{r+1}, \ldots, \beta_{q-1})} \nonumber \\ 
& = - \delta \liminf_{N\rightarrow\infty}\frac{1}{N} \E^N_{\bm \beta_{\delta, r}} [\cons_r(\vec \sigma)], 
\end{align*}  which is strictly negative due to  \eqref{suff:cond}. This is easily seen to imply \eqref{eq:prob_ar}.

Finally, note that for every feasible configuration, that is, for every $\vec \sigma$ such that $\P^N_{\bm \beta}(\vec \sigma) > 0$,
    $\ucons_r(\vec \sigma) \geq \cons_r(\vec \sigma)$, for all
$1 \leq r \leq q-1$.   
       Therefore, for every $1 \leq r \leq q-1$, 
\begin{align*}
\P^N_{\bm \beta}\left( \ucons_r(\vec \sigma) > \varepsilon N \right) & \geq \P^N_{\bm \beta}\left( \cons_r(\vec \sigma)  > \varepsilon N  \right).  
\end{align*} 
Taking first the limit inferior as $N \rightarrow \infty$, and then the limit as $\varepsilon \rightarrow 0$ and using
  \eqref{eq:prob_ar}, it follows that the quantity on the last display converges to $1$, thus establishing 
  \eqref{eq:sigma_neighborhood}. 
\end{proof}

\subsection{Proof of Corollary \ref{cor:unconstrained_H}}
\label{sec:pfunconstrained_H}

The proof of this corollary entails verifying condition \eqref{suff:cond} in Proposition \ref{ppn:expectation}. The argument towards this proceeds along similar lines as Lemma \ref{lm:lp}. Hereafter, suppose that the vertex $s \in \{0, 1, \ldots, q-1\}$ in the graph $H$ is unconstrained.  For $1 \leq r \leq q-1$ (note that $r$ can be equal to $s$) and $u \in V(G_N)$, we partition the collection $\Omega_{G_N}(H)$ of all admissible $H$-colorings of $G_N$ as follows:  Let $\Omega_{G_N, u, r}^+(H)$ be the collection of colorings of $G_N$ with $\sigma_u=r$, and $\Omega_{G_N, u, r}^-(H)= \Omega_{G_N}(H) \backslash \Omega_{G_N, u, r}^+(H)$ the collection of colorings of $G_N$ with $\sigma_u \ne r$. Now, define the map $g: \Omega_{G_N, u, r}^-(H) \rightarrow \Omega_{G_N, u, r}^+(H)$, which takes $\vec \sigma \in \Omega_{G_N, u, r}^-(H)$ to $\vec \sigma' \in \Omega_{G_N, u, r}^+(H)$ which is defined as 
$$\sigma'_v=
\left\{
\begin{array}{ccc} 
r  &  \text{ for } v = u,   \\ 
s &  \text{ for } v \in \cN_{G_N}(u),   \\
\sigma_v  &  \text{otherwise.}   \\
\end{array}
\right. $$
(Observe that this is a valid coloring because there is no constraint on coloring a vertex $s$.) Note that given $\vec \sigma' \in \Omega_{G_N, u, r}^+(H)$, $$|g^{-1}(\sigma')|:=|\{\sigma \in  \Omega_{G_N, u, r}^-(H) : g(\vec \sigma)=\vec\sigma'\}| \leq q^{d_u(G_N) + 1}.$$ Moreover, if $\sigma \in g^{-1}(\vec \sigma')$, then for $\bm b \in \R^{q-1}$, 
 $$e^{ \sum_{t=0}^{q-1} b_t \cons_t(\vec\sigma) } \leq e^{|| \bm b ||_1 {(d_u(G_N) + 1)}}  e^{ \sum_{t=0}^{q-1} b_t \cons_s(\vec\sigma') },$$ because $|\cons_t(\vec \sigma') - \cons_t(\vec \sigma)|  \leq    d_u(G_N)+1$, for all $0 \leq t \leq q-1$. Therefore,  it follows that 
\begin{align}\label{eq:Z_sigma}
\sum_{\vec \sigma \in  \Omega_{G_N, u, r}^-(H) } e^{ \sum_{t=0}^{q-1} b_t \cons_t(\vec\sigma) }  & \leq \sum_{\vec \sigma' \in \Omega_{G_N, u, r}^+(H)} \sum_{\vec \sigma \in g^{-1}(\vec \sigma')} e^{ \sum_{t=0}^{q-1} b_t \cons_t(\vec\sigma) } \nonumber \\ 
& \leq  e^{|| \bm b ||_1 {(d_u(G_N) + 1)}}  q^{d_u(G_N) + 1} \sum_{\vec \sigma' \in \Omega_{G_N, u, r}^+(H)} e^{ \sum_{t=0}^{q-1} b_t \cons_t(\vec\sigma') }.  
\end{align}
Hence, 
\begin{align*}
 \E_{\bm b} [\cons_r(\vec \sigma)]  = \sum_{u=1}^N \P^N_{\bm b}(\sigma_u = r ) & =  \sum_{u=1}^N \frac{\sum_{\vec \sigma \in \Omega_{G_N, u, r}^+(H)}  e^{ \sum_{t=0}^{q-1} b_t \cons_t(\vec\sigma) }  }{\sum_{\vec \sigma \in \Omega_{G_N}(H)} e^{ \sum_{t=0}^{q-1} b_t \cons_t(\vec\sigma) } }  \nonumber \\ 
& \geq \sum_{u=1}^N \frac{1}{1+ e^{|| \bm b ||_1 {(d_u(G_N) + 1)}}  q^{d_u(G_N) + 1} } \tag*{(using \eqref{eq:Z_sigma})}\nonumber \\ 
& \geq  \frac{1}{1+ e^{|| \bm b ||_1 (M+1)}  q^{M + 1} } \frac{|\{u: d_u(G_N) \leq M \}|}{N}, 
\end{align*} 
where the last step holds for any $M >0$. Then, by Lemma \ref{lm:M}, choosing $M=M_0$ such that $\liminf_{N \rightarrow \infty} \frac{|\{u: d_u(G_N) \leq M_0\}|}{N} > 0$, it follows that 
\begin{align*}
 \liminf_{N \rightarrow \infty} \frac{1}{N} \E_{\bm b} [\cons_r(\vec \sigma)]  & \geq  \frac{1}{1+ e^{|| \bm b ||_1 (M_0+1)}  q^{M_0 + 1} } \liminf_{N \rightarrow \infty} \frac{|\{u: d_u(G_N) \leq M_0 \}|}{N} > 0,
\end{align*} 
for all $\bm b \in \R^{q-1}$ and all $1 \leq r \leq q-1$.  Then, by Proposition \ref{ppn:expectation} and Theorem \ref{thm:beta_H}, the result in Corollary \ref{cor:unconstrained_H} follows.  \qed

\section{Proof of Corollary \ref{cor:coloring}}
\label{sec:pfcorollary_II}
 
We say a set $A \subset V(G_N)$ is {\it neighborhood disjoint}, if $\cN_{G_N}(v) \bigcap \cN_{G_N}(w) = \emptyset$, for all $v, w \in A$ with $v \ne w$. In other words, a set $A$ is  neighborhood disjoint if the neighborhoods of any two vertices in $A$ are disjoint.  We begin by showing that if a graph has bounded average 2-neighborhood, it has a $\Theta(N)$ size subset of vertices that  is neighborhood disjoint.

\begin{lem}\label{lm:2neighborhood} Let $G_N$ be a graph with $V(G_N)=\{1, 2, \ldots, N\}$ and $W  \subset V(G_N)$, with $|W| \geq \frac{N}{2}$. Then there exists $A \subseteq W$  such that $A$ is neighborhood disjoint in $G_N$ and 
  \begin{align}
    \label{ineq-A}
    |A|  \geq \frac{|W|}{1+\frac{1}{|W|} \sum_{v=1}^{|W|} d_{2,  v}(G_N) } 
    & \geq \frac{N}{4(1+\frac{1}{ N} \sum_{v=1}^{N} d_{2,  v}(G_N))}.
    \end{align}
\end{lem}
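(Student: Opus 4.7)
The plan is to reduce the lemma to finding a large independent set in an auxiliary graph on $W$ and then to apply the probabilistic proof of Tur\'an's theorem (the Caro--Wei bound), in line with the parallel with extremal combinatorics drawn by the authors in the discussion preceding the lemma.

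First I would introduce the auxiliary graph $\widetilde G$ with vertex set $W$, where distinct $v, w \in W$ are joined by an edge exactly when $\cN_{G_N}(v) \cap \cN_{G_N}(w) \ne \emptyset$. By the very definition of neighborhood disjoint, a subset $A \subseteq W$ is neighborhood disjoint in $G_N$ if and only if $A$ is an independent set in $\widetilde G$, so it suffices to exhibit an independent set of $\widetilde G$ of the desired size. The key structural observation is the degree bound
$$
\deg_{\widetilde G}(v) \le d_{2, v}(G_N) \qquad \text{for every } v \in W.
$$
Indeed, if $w \ne v$ is a $\widetilde G$-neighbor of $v$, then some $u$ lies in $\cN_{G_N}(v) \cap \cN_{G_N}(w)$, so $v\text{--}u\text{--}w$ is a walk of length $2$ in $G_N$; in particular $w$ is at distance $1$ or $2$ from $v$, hence $w$ lies in the $2$-neighborhood of $v$ in $G_N$, and restricting to $w \in W$ yields the bound.

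Next I would run the standard Caro--Wei probabilistic argument on $\widetilde G$: sample a uniformly random total order on $W$ and let $A$ be the (random) subset of those $v \in W$ that precede every one of their $\widetilde G$-neighbors in this order. By construction $A$ is independent in $\widetilde G$. For each $v \in W$, the element of $\{v\} \cup \{w \in W : w \sim_{\widetilde G} v\}$ that comes first in the order is uniformly distributed, so the probability that $v \in A$ equals $1/(\deg_{\widetilde G}(v) + 1)$. Linearity of expectation combined with the degree bound then gives
$$
\E|A| \;=\; \sum_{v \in W} \frac{1}{\deg_{\widetilde G}(v) + 1} \;\ge\; \sum_{v \in W} \frac{1}{d_{2, v}(G_N) + 1},
$$
and, by Jensen's inequality applied to the convex function $x \mapsto 1/x$ with $x = d_{2,v}(G_N) + 1$, the right-hand side is at least $|W|\big/\bigl(1 + \tfrac{1}{|W|}\sum_{v \in W} d_{2,v}(G_N)\bigr)$. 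Fixing any realization of the random order that attains at least this expected value then delivers the first inequality in \eqref{ineq-A}.

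For the second inequality in \eqref{ineq-A}, the remaining work is purely elementary: using $\sum_{v \in W} d_{2,v}(G_N) \le \sum_{v \in V(G_N)} d_{2,v}(G_N)$ together with $|W| \ge N/2$ gives
$$
1 + \tfrac{1}{|W|}\sum_{v \in W} d_{2,v}(G_N) \;\le\; 1 + \tfrac{2}{N}\sum_{v \in V(G_N)} d_{2,v}(G_N) \;\le\; 2\Bigl(1 + \tfrac{1}{N}\sum_{v \in V(G_N)} d_{2,v}(G_N)\Bigr),
$$
and then $|W| \ge N/2$ in the numerator supplies the remaining factor of $1/2$, combining to the claimed $N/4$ denominator. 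I do not foresee any significant obstacle: the single real idea is the auxiliary-graph reduction together with the $2$-neighborhood degree bound, after which the conclusion is a textbook application of the Tur\'an-type probabilistic argument that the authors explicitly indicate as their model of proof.
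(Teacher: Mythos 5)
Your proposal is correct and follows essentially the same route as the paper: the authors also run the random-permutation (Caro--Wei) argument, admitting a vertex of $W$ exactly when it precedes all vertices at distance $1$ or $2$ from it, and then apply the AM--HM inequality and $|W|\ge N/2$ exactly as you do. Your only cosmetic difference is packaging the argument through the auxiliary ``intersecting-neighborhoods'' graph $\widetilde G$ with the degree bound $\deg_{\widetilde G}(v)\le d_{2,v}(G_N)$, which is a clean equivalent of the paper's direct distance-based inclusion rule.
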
 

\begin{proof} Let $\pi =(\pi_1, \ldots, \pi_N)$ be a uniformly random permutation of the vertices of $G_N$.  Suppose $1 \leq i_1< i_2 < \ldots i_{|W|} \leq N$ is such that $W=\{\pi(i_1), \pi(i_2), \ldots, \pi(i_{|W|})\}$. We will build a (random) set $\X$ iteratively as follows: Start with the empty set $\X_0: = \emptyset$. In the $w$-th step, for $1 \leq w \leq |W|$, construct the set $\X_w$ by including the vertex $\pi_{i_w}$ into the set $\X_{w-1}$ if none of the vertices in $G_N$ that are at distance one or two from $\pi_{i_w}$ appear before it in $\pi$. Otherwise, let $\X_w= \X_{w-1}$ and we move to step $w+1$. In the end, define $\X:= \X_{|W|}$. Clearly, the set $\X$ constructed in this way is  neighborhood disjoint in $G_N$. Moreover, a vertex $w \in W$ appears in the set $\X$ if and only if it appears in the permutation $\pi$ before all the $d_{2,  w}(G_N)$ vertices that are at distance 1 or 2 from $w$. Note that this happens with probability $1/(1 + d_{2,  w}(G_N))$. Hence, denoting by $d(u, v)$ the shortest path distance between the vertices $u$ and $v$ in $G_N$ and using the fact that  the arithmetic mean is at least the harmonic mean, we see that   
\begin{align*}
  \E \left[|\X|\right]    =
  \sum_{w=1}^{|W|}  \bm 1\left\{ \pi_{i_w} < \min \{\pi_s: 1 \leq d(\pi_{i_w},\pi_{s}) \leq 2 \} \right\} 
  & =
  \sum_{w=1}^{|W|} \frac{1}{1+d_{2,w}(G_N)} \nonumber \\ 
 & \geq \frac{|W|}{1+\frac{1}{|W|} \sum_{v=1}^{|W|} d_{2,  v}(G_N) }.
\end{align*} 
 This implies that there must exist a  neighborhood disjoint set $A \subset W$ for which the
 first inequality in \eqref{ineq-A} holds.  On the other hand, the second inequality in \eqref{ineq-A} is a consequence
 of the fact that  $|W| \geq N/2$ and $|W| + \sum_{v=1}^{|W|} d_{2,  v}(G_N)
 \leq N + \sum_{v=1}^{N} d_{2,  v}(G_N)$, which completes the proof. 
\end{proof}

We now proceed to prove Corollary \ref{cor:coloring}. 

\begin{proof}[Proof of Corollary \ref{cor:coloring}.] 
    Suppose $\{G_N\}_{N \in  \N}$ is a sequence of graphs that satisfies \eqref{deg2-cond},  that is, $\sup_{N \in  \N} \frac{1}{N} \sum_{v=1}^N d_{2,  v}(G_N) < \infty$. For the $q$-coloring model,  fix $r \in \{0, 1, \ldots, q-1\}$, 
  and note that since 
  $H=K_q$ is the complete graph on $q$ vertices, it follows that $\cN_H(r)=\{0, 1, \ldots, q-1\} \backslash\{r\}$.
    Hence, recalling the definition of the set $\cU^N_r(\vec \sigma)$ from \eqref{eq:unconstrained_r}, we see that 
$$\cU^N_r(\vec \sigma):= \{ u \in \{1, \ldots, N\}: r \notin \vec \sigma_{\cN_{G_N}(u)}  \}.$$
We will show that for all $\vec \sigma \in \{0, 1, \ldots, q-1\}^N$  such that $\P^N_{\bm \beta}(\vec \sigma) > 0$, $|\cU^N_r(\vec \sigma)| \gtrsim N$.   This would then establish \eqref{eq:sigma_neighborhood}, since $|\cU^N_r(\vec \sigma)|=\ucons_r(\vec \sigma)$, and complete the proof of Corollary \ref{cor:coloring} due to Theorem \ref{thm:beta_H}. 

Note that if  $|\cU^N_r(\vec \sigma)|  \geq N/2$, we are trivially done.  Therefore, it suffices to assume that $|(\cU^N_r(\vec \sigma))^c| \geq N/2$, where for any set $A$, $A^c$ denotes the complement of $A$.
Now, let
  \[  S := \sup_{N \in  \N} \frac{1}{N} \sum_{v=1}^N d_{2,  v}(G_n),  \]
which is finite by assumption. 
Then by Lemma \ref{lm:2neighborhood} there exists a set $\cY \subseteq (\cU^N_r(\vec \sigma))^c$ with $|\cY| \geq \frac{N}{4(1+M)}$  such that $\cY$ is neighborhood disjoint in $G_N$. Since for every vertex in $\cY$ there is a corresponding vertex in its neighborhood that is assigned the color $r$, by the definition of the set $(\cU^N_r(\vec \sigma))^c$, and the fact that this collection of corresponding vertices are all distinct, we have, when $|(\cU^N_r(\vec\sigma))^c| \geq N/2$, 
$$|\cU^N_r(\vec \sigma)| \geq \cons_r(\vec \sigma) \geq |\cY| \geq \tfrac{N}{4(1+ S)},$$
where the first inequality uses \eqref{eq:Ur_condition}. This completes the proof of Corollary \ref{cor:coloring}. 
\end{proof}

\section{Examples where consistent estimation fails}
\label{sec:examples}

In this section we demonstrate why the sufficient conditions imposed in the consistency results for general $H$-coloring models are required, by presenting two illustrative examples where the conditions are violated and consistent estimation of the parameters is impossible. In Section \ref{subs-eg1}, we construct an $H$-coloring model where consistent estimation is impossible even when the sequence of graphs $\{G_N\}_{N \in  \N}$ has uniformly bounded maximum degree. This shows, unlike in the hardcore model, for general $H$-coloring models just assuming bounded average degree is not enough to guarantee $\sqrt N$-consistency. Next, in Section \ref{subs-eg2}, we construct a $q$-coloring model on a sequence of graphs $\{G_N\}_{N \in  \N}$ with bounded average degree where consistent estimation is impossible. This shows that, unlike in models with unconstrained vertices, bounded average degree does not ensure $\sqrt N$-consistent estimation for the $q$-coloring model, and thus some stronger condition is required.

 We begin by recalling the definition of consistency of estimates:

  \begin{defn}
    \label{def-cons-est}
    A sequence of estimates $\{\check {\bm \beta}^N\}_{N \in \N}$ is said to be {\it consistent} for the $H$ coloring model \eqref{eq:model_H_II},  if $\check {\bm \beta}^N$ converges
    in probability to $\bm \beta$, for all $\bm \beta \in \R^{q-1}$.
  \end{defn}

\begin{figure*}[!h]
\centering
\begin{minipage}[c]{1.0\textwidth}
\centering
\includegraphics[width=3.25in]
    {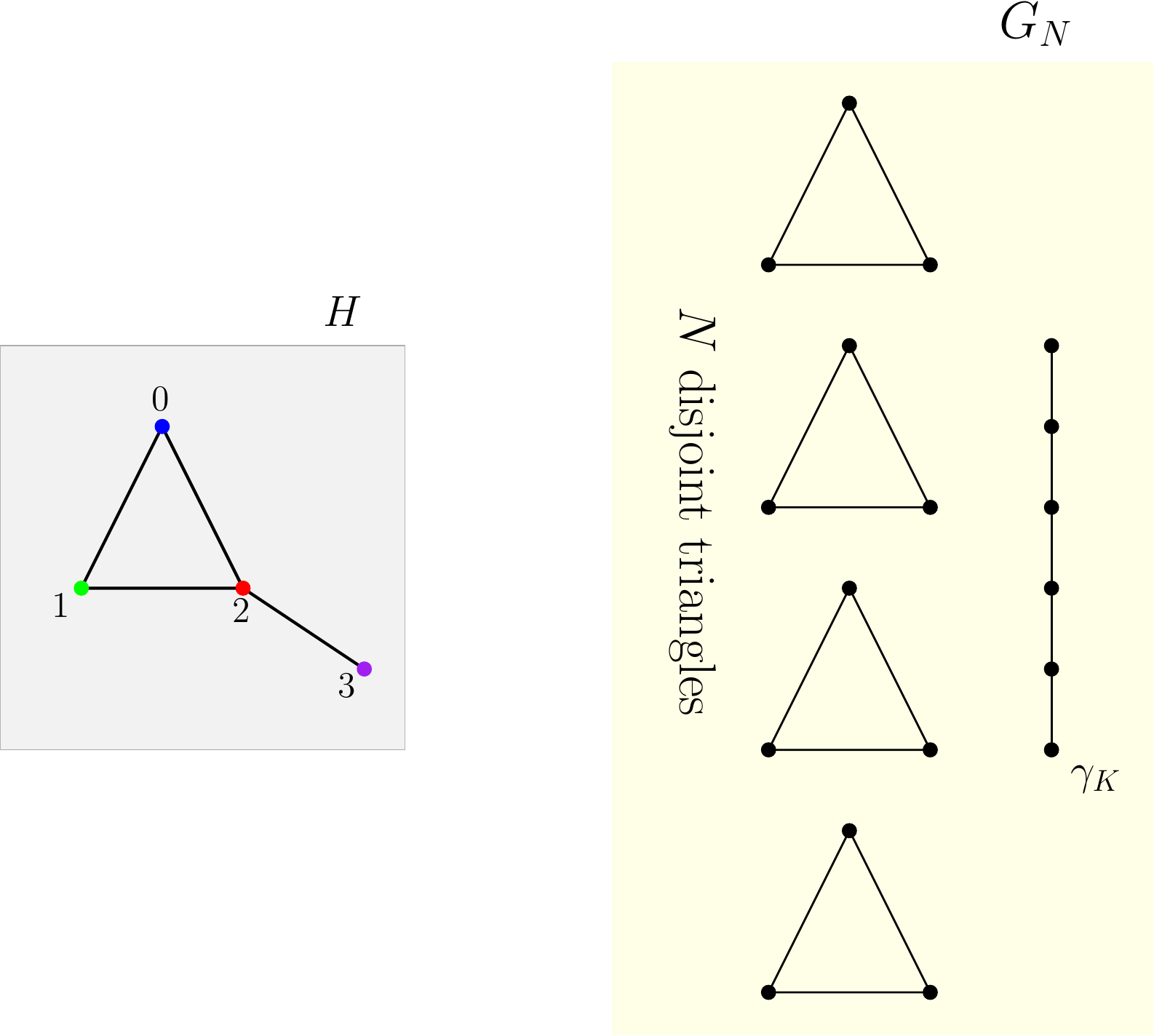}\\
\end{minipage}
\caption{\small{The $H$-coloring model corresponding to Section \ref{subs-eg1}.}} 
\label{fig:H_graph}
\end{figure*}

\subsection{Example of an $H$-coloring on a uniformly bounded degree graph sequence}
\label{subs-eg1}

Here, we construct a sequence of graphs $\{G_N\}_{N \in  \N}$ with bounded maximum degree and a constraint graph $H$ where the condition \eqref{eq:sigma_neighborhood} does not hold,
and consistent estimation is impossible. To this end, fix $K \geq 1$. Let $G_N$ be the disjoint union of $N$ triangles
and the path $\gamma_K$ with $K$ vertices (as shown in Figure \ref{fig:H_graph}). Suppose the constraint graph $H$, with $q=4$ vertices labeled $\{0, 1, 2, 3\}$, is as shown in Figure \ref{fig:H_graph}.  It is easy to see that  $\Omega_{G_N}(H)$ is non-empty. Label the vertices of the $N$ triangles $1, 2, \ldots, 3 N$ and the vertices of the path $3N+1, 3N+2, \ldots 3N+K$.  Since
$3$ is adjacent to just one color in the graph $H$, precisely, $\cN_H(3) = \{2\}$, 
 there is no valid coloring $\vec \sigma \in \Omega_{G_N}(H)$ with  $\sigma_{u}=3$, for a vertex  $1 \leq u \leq 3 N$ in one of the triangles. This implies,  $$|\{u \in V(G_N): \vec \sigma_{\cN_{G_N}(u)} = \{2\}\}| \leq K.$$
Therefore, condition \eqref{eq:sigma_neighborhood} does not hold for this model, when $r=3$ (recall that $\cN_H(3) = \{ 2 \}$).

In fact, in this case consistent estimation of $\bm \beta = (\beta_1, \beta_2, \beta_3) \in \R^3$ is impossible. To show this, assume that there exists a sequence of estimates $\{\check {\bm \beta}^N\}_{N \in \N}$ that is consistent for $\bm \beta \in \R^3$. Now, consider the following hypothesis testing problem, 
\begin{align}\label{eq:H0_beta123}
H_0: \bm \beta = (0, 0, \theta) \text{~and~} H_1: \bm \beta= (0, 0, \theta'),
\end{align}
where $\theta \ne \theta' \in \R$ is such that $|\theta-\theta'| > \delta$, for some fixed $\delta >0$. We recall the definition of consistency for tests.  
 \begin{defn}
    \label{def-cons-test}
    A sequence of test functions $\{\phi_N\}_{N \in \N}$ is said to be {\it consistent} 
    if both its Type I and Type II errors converge to zero as $N \rightarrow \infty$,
    that is, $\lim_{N\rightarrow\infty}\E_{H_0}[\phi_N]=0$, and the power satisfies 
    $\lim_{N\rightarrow\infty}\E_{H_1}[\phi_N]=1$.
  \end{defn}

Using the sequence of consistent estimates $\{\check {\bm \beta}^N\}_{N \in \N}$ we can now construct a consistent sequence of test functions $\{\phi_N\}_{N \in \N}$ for the hypothesis \eqref{eq:H0_beta123} as follows: 
$$\phi_N= \bm 1\{ \check {\bm \beta}^N \notin B_{\theta, \theta'}\},$$
where $B_{\theta, \theta'}$ is the ball in $\R^3$ with center at $(0, 0, \theta) $ and radius $|\theta-\theta'|/2$. This means, in other words, that the corresponding test will reject the null hypothesis $H_0$ whenever $\check {\bm \beta}^N \notin B_{\theta, \theta'}$.  Hereafter, for $\theta, \theta' \in \R$ denote $\bm \theta := (0, 0, \theta)$ and $\bm \theta' := (0, 0, \theta')$, and let $\P^N_{\bm \theta}$ be the probability measure of the $H$-coloring model \eqref{eq:model_H_II}, with $H$ as in Figure \ref{fig:H_graph} and parameters $(\beta_1, \beta_2, \beta_3)= (0, 0, \theta)$.   We now compute the Kullback-Leibler (KL) divergence between the  probability measures $\P^N_{\bm \theta}$ and $\P^N_{\bm \theta'}$. By definition, 
\begin{align}\label{eq:theta12}
 D_{\mathrm{KL}}(\P^N_{\bm \theta}||\P^N_{\bm \theta'}) :=\E_{\vec \sigma \sim \P^N_{\bm \theta}}\left[\log\frac{\P^N_{\bm \theta}(\vec \sigma)}{\P^N_{\bm \theta'}(\vec \sigma)} \right] = F_{G_N}(\bm \theta')-F_{G_N}(\bm \theta) -(\theta'-\theta)\frac{\partial}{\partial \beta_3}F_N(\bm \beta)\Big|_{ \bm \beta = (0, 0, \theta)}. 
\end{align}
Clearly, recalling the definition of $\cons_3(\vec \sigma)$ from \eqref{eq:count_sigma}, 
$$\frac{\partial}{\partial \beta_3}F_N(\bm \beta)\Big|_{ \bm \beta = (0, 0, \theta)}= \E_{ \vec \sigma \sim \P^N_{\bm \theta}} \left[ \cons_3(\vec \sigma) \right] \leq K = O(1),$$
since, as discussed above, no vertex of the $N$ disjoint triangles can be assigned color 3.
Combining this with Lemma \ref{lm:theta12} below shows, $$D_{\mathrm{KL}}(\P^N_{\bm \theta}||\P^N_{\bm \theta'})= O(1).$$ 
 This implies, by \cite[Proposition 6.1]{BM16}, that there is no sequence of consistent tests for \eqref{eq:H0_beta123}, which leads to a contradiction. In turn, this  shows that there is  no sequence of consistent estimates for $\bm \beta \in \R^3$ for the $H$-coloring model  in Figure \ref{fig:H_graph}.

\begin{lem}\label{lm:theta12} Let $H$ be as in Figure \ref{fig:H_graph}. Then, for any $\theta \ne \theta' \in \R$, $$F_{G_N}(\bm \theta') - F_{G_N}(\bm \theta) = O_{\theta, \theta', K}(1).$$
\end{lem}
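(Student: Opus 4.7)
The plan is to reduce the log-partition difference to a bound on the number of color-$3$ vertices, exploiting the structural fact already emphasized in the text: no vertex of the $N$ disjoint triangles can be assigned color $3$, so $c^N_3(\vec\sigma)$ is uniformly bounded by $K$.

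First I would write out $F_{G_N}$ at $\bm\theta=(0,0,\theta)$ and $\bm\theta'=(0,0,\theta')$ directly from the definition \eqref{eq:FGn} adapted to the $H$-coloring model. Since the first two coordinates of both parameter vectors vanish, the exponential weight collapses to $e^{\theta c^N_3(\vec\sigma)}$ (respectively $e^{\theta' c^N_3(\vec\sigma)}$), yielding
\begin{align*}
F_{G_N}(\bm\theta) = \log \sum_{\vec\sigma \in \Omega_{G_N}(H)} e^{\theta\, c^N_3(\vec\sigma)}, \qquad F_{G_N}(\bm\theta') = \log \sum_{\vec\sigma \in \Omega_{G_N}(H)} e^{\theta'\, c^N_3(\vec\sigma)}.
\end{align*}

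Next I would invoke the key observation: in the graph $H$ of Figure \ref{fig:H_graph}, $\cN_H(3)=\{2\}$, so any vertex of $G_N$ colored $3$ must have every neighbor colored $2$. In a triangle this is impossible (three mutually adjacent vertices cannot simultaneously satisfy this), so any admissible $\vec\sigma\in\Omega_{G_N}(H)$ has $\sigma_u\neq 3$ for every $u\in\{1,\ldots,3N\}$. Therefore $0\le c^N_3(\vec\sigma)\le K$ for every $\vec\sigma\in\Omega_{G_N}(H)$, which immediately gives the two-sided bound
\begin{align*}
e^{-|\theta|K}\,|\Omega_{G_N}(H)| \;\le\; \sum_{\vec\sigma \in \Omega_{G_N}(H)} e^{\theta\, c^N_3(\vec\sigma)} \;\le\; e^{|\theta|K}\,|\Omega_{G_N}(H)|,
\end{align*}
and analogously for $\theta'$.

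Taking logarithms of both inequalities yields $\bigl|F_{G_N}(\bm\theta)-\log|\Omega_{G_N}(H)|\bigr|\le |\theta|K$ and the corresponding bound for $\theta'$, and subtracting gives
\begin{align*}
\bigl|F_{G_N}(\bm\theta')-F_{G_N}(\bm\theta)\bigr| \;\le\; (|\theta|+|\theta'|)K \;=\; O_{\theta,\theta',K}(1),
\end{align*}
as required. There is really no obstacle here: once the structural lemma $c^N_3(\vec\sigma)\le K$ is in hand (already established in the surrounding discussion), the estimate is purely arithmetic, since the common factor $|\Omega_{G_N}(H)|$ — which does depend on $N$ — cancels in the difference.
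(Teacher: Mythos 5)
Your proposal is correct and follows essentially the same route as the paper: both bound $0 \le \cons_3(\vec\sigma) \le K$ uniformly over $\Omega_{G_N}(H)$ (using the fact that no triangle vertex can receive color $3$), sandwich the partition sum between constant multiples of $|\Omega_{G_N}(H)|$, and take logarithms so that the $N$-dependent term cancels in the difference. The only cosmetic difference is that you use $e^{\pm|\theta|K}$ where the paper uses $\min\{e^{\theta K},1\}$ and $\max\{e^{\theta K},1\}$; both yield the same $O_{\theta,\theta',K}(1)$ conclusion.
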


\begin{proof} Fix $\theta \in \R$. Then using the bounds $0\leq \cons_3(\vec \sigma) \leq K$ gives,  
\begin{align*}
\min\{ e^{\theta K}, 1 \} |\Omega_{G_N}(H)|  \leq \sum_{\vec \sigma \in \Omega_{G_N}(H)}  e^{\theta \cons_3(\vec \sigma) }  \leq \max\{ e^{\theta K}, 1 \} |\Omega_{G_N}(H)|.  
\end{align*} 
Therefore, taking logarithms gives, 
\begin{align*}
\log \min\{ e^{\theta K}, 1 \} & + \log |\Omega_{G_N}(H)|  \leq F_{G_N}(\bm \theta)  \leq \log \max\{ e^{\theta K}, 1 \} + \log |\Omega_{G_N}(H)|.  
\end{align*} 
This implies, for $\theta \ne \theta' \in \R$, 
\begin{align*}
| F_{G_N}(\bm \theta') - F_{G_N}(\bm \theta) | = O_{\theta, \theta', K}(1), 
\end{align*} 
which proves the lemma.  
\end{proof}

\subsection{Example of a $q$-coloring model on a bounded average degree graph}
\label{subs-eg2}

Here, we construct an example where the graph sequence $G_N$ has bounded average degree, but consistent estimation is impossible in the $q$-coloring model on $G_N$. To this end, fix $q \geq 2$ and construct the graph $G_N= (V(G_N), E(G_N))$ as follows: Let $V(G_N)= V_1 \bigcup V_2$ with $V_1:=\{u_1, \ldots, u_{q-1}\}$ and $V_2 := \{w_1, \ldots, w_N\}$, and $E(G_N)= E_1 \bigcup E_2$, with $E_1=\{(u_i, u_j): 1 \leq i < j \leq q-1\}$ and $E_2 := \{ (u_i, w_j): 1 \leq i \leq q-1 \text{ and } 1 \leq j \leq N \}$. In order words, $G_N$ has a clique of size $q-1$ on the vertex set $V_1$ and a complete bipartite graph between the sets $V_1$ and $V_2$, where $|V_1|=q-1$ and $|V_2|=N$. Figure \ref{fig:coloring_graph_example} shows the graph $G_5$ when $q=3$.

\begin{figure*}[!h]
\centering
\begin{minipage}[c]{1.0\textwidth}
\centering
\includegraphics[width=3.05in]
    {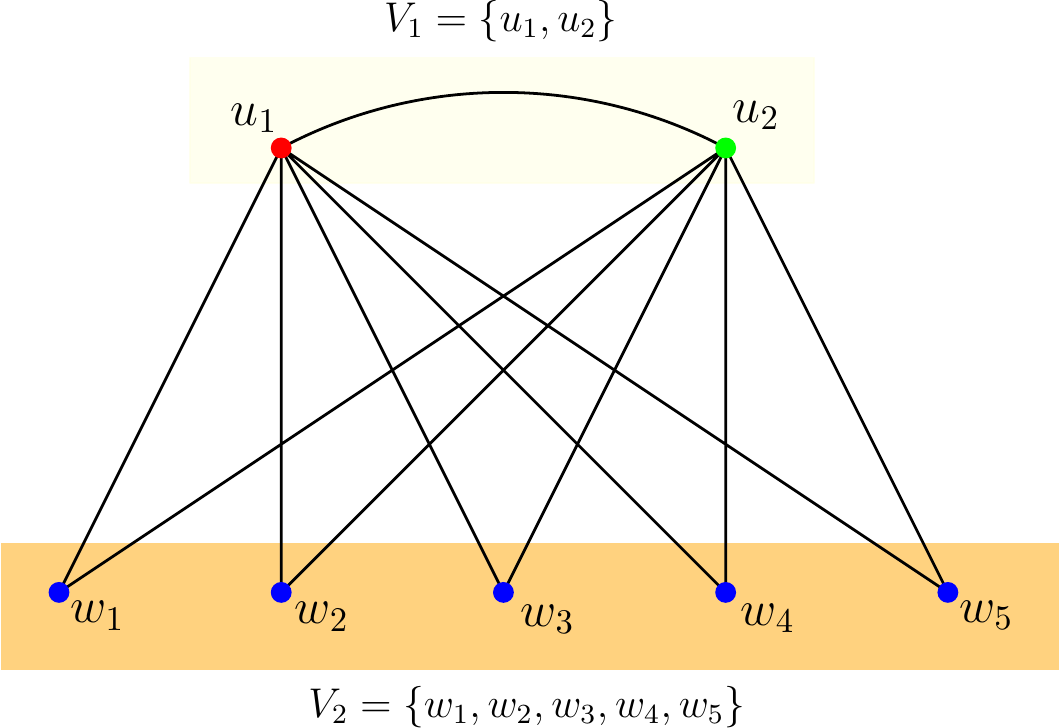}\\
\end{minipage}
\caption{\small{The graph corresponding to Section \ref{subs-eg2} when $q=3$ and $N=5$.}}
\label{fig:coloring_graph_example}
\end{figure*}

In any valid coloring of $G_N$ all the vertices of $V_1$ must have distinct colors and, hence, all the vertices in $V_2$ must have the same (remaining) color. For $0 \leq r \leq q-1$, denote by $\Omega_{K_q, r}(G_N)$ the set of  valid colorings of $G_N$ where every vertex in $V_2$ has color $r$. Clearly, the sets $\Omega_{K_q, 0}(G_N),  \ldots, \Omega_{K_q, q-1}(G_N)$ form a disjoint partition of the set of all valid $q$-colorings of $G_N$. Note that $|\Omega_{K_q, r}(G_N)|=(q-1)!$, for all $1 \leq r \leq q-1$. Then setting $\beta_0=0$ it follows that, for $\vec \tau \in \Omega_{K_q, r}(G_N)$, 
\begin{align*} 
 \P_\beta^N(\vec \sigma=\vec \tau)  =\frac{\exp\left\{\beta_r N + \sum_{s\in \{0, 1, \ldots, q-1\}\backslash\{r\} } \beta_s  \right\} }{(q-1)! \sum_{a=0}^{q-1}  \exp\left\{\beta_a N + \sum_{s\in \{0, 1, \ldots, q-1\}\backslash\{a\} } \beta_s \right\} }. 
\end{align*} 
Note that when the constraint graph $H=K_q$, the set of $r$-unconstrained vertices is $\cU^N_r(\vec \sigma):= \{ u \in \{1, \ldots, N\}: r \notin \vec \sigma_{\cN_{G_N}(u)}  \}$ (recall \eqref{eq:unconstrained_r}). Therefore, for every $\varepsilon \in (0, 1)$ fixed, 
\begin{align*}
 \P_\beta^N(|\cU_1^N(\vec \sigma)| > \varepsilon N)  & =  \P_\beta^N(|\cU_1^N(\vec \sigma)|=N) \nonumber \\ 
& = \sum_{\vec \tau \in  \Omega_{K_q, 1}(G_N)}\P_\beta^N(\vec \sigma=\vec \tau) \nonumber \\ 
& = \frac{\exp\left\{\beta_1 N + \sum_{s\in \{0, 1, \ldots, q-1\}\backslash\{1\} } \beta_s  \right\} }{\sum_{a=0}^{q-1}  \exp\left\{\beta_a N + \sum_{s\in \{0, 1, \ldots, q-1\}\backslash\{a\} } \beta_s \right\} }  \nonumber \\ 
& = \frac{\exp\left\{ \sum_{s\in \{0, 1, \ldots, q-1\}\backslash\{1\} } \beta_s  \right\} }{\sum_{a=0}^{q-1}  \exp\left\{(\beta_a - \beta_1) N + \sum_{s\in \{0, 1, \ldots, q-1\}\backslash\{a\} } \beta_s  \right\} }. 
\end{align*}
Now, taking limits on both sides as $N \rightarrow \infty$ gives, 
\begin{align}\label{eq:condition_q_example} 
\limsup_{N \rightarrow \infty}  \P_\beta^N(|\cU_1^N(\vec \sigma)| > \varepsilon N) = 0,
\end{align} 
whenever $\beta_a > \beta_1$, for some $a \in \{0, \ldots, q-1\} \backslash\{1\}$. In particular, \eqref{eq:condition_q_example} holds whenever $\beta_1 < 0$, since $\beta_0=0$, irrespective of the values of $\beta_r$, for $2 \leq r \leq q-1$.  This implies, for $\beta_1 < 0$, 
\begin{align*}
 \limsup_{N \rightarrow \infty} \P^N_{\bm \beta}\left( |\cU_r^N(\vec \sigma)|   > \varepsilon N, \text{ for all } 1 \leq r \leq q-1 \right) 
& \leq \limsup_{N \rightarrow \infty} \P^N_{\bm \beta}\left( |\cU_1^N(\vec \sigma)|   > \varepsilon N \right) = 0,   
\end{align*} 
that is, recalling $\ucons_1(\vec \sigma)=|\cU_1^N(\vec \sigma)|$, \eqref{eq:sigma_neighborhood} does not hold. 

In this case, as in the example above, we can again show consistent estimation of $\bm \beta$ is impossible when $\beta_1 < 0$. To this end, for $\theta \in \R$, let $\bm \theta := ( \theta, 0, \ldots, 0) \in \R^{q-1}$ and let $\P^N_{\bm \theta}$ be the probability measure of the $q$-coloring model with parameter $\bm \theta$.  
To begin with, note that 
\begin{align*}
\lim_{N \rightarrow \infty} F_{G_N}(\bm \theta) & = \lim_{N \rightarrow \infty}  \log Z_{G_N}(\bm \theta) \nonumber \\  
& = \lim_{N \rightarrow \infty}  \log \left\{ (q-1)! \left(  e^{\theta N}  + (q-1) e^{\theta} \right) \right \} \nonumber \\ 
& =  \log \left\{ (q-1) (q-1)!  e^{\theta} \right \},
\end{align*}
for $\theta < 0$. This shows that, for $\theta < 0$, $F_{G_N}(\bm \theta) = O(1)$. Therefore, for $\theta < \theta' < 0$,  by monotonicity,  
To begin with, note that 
\begin{align*}
 (\theta'-\theta) \frac{\partial}{\partial t}F_{G_N}((t, 0, \ldots, 0))\Big|_{t = \theta}  & \leq \int_{\theta}^{\theta'}  \frac{\partial}{\partial t}F_{G_N}((t, 0, \ldots, 0)) \mathrm d t \nonumber \\ 
& = F_{G_N}(\bm \theta') -  F_{G_N}(\bm \theta) \nonumber \\ 
& = O(1). 
\end{align*} 
Therefore, the KL divergence between the  probability measures $\P^N_{\bm \theta}$ and $\P^N_{\bm \theta'}$, for $\theta < \theta' < 0$ is, 
\begin{align*}
 D_{\mathrm{KL}}(\P^N_{\bm \theta}||\P^N_{\bm \theta'}) 
& = F_{G_N}(\bm \theta')-F_{G_N}(\bm \theta)-(\theta'-\theta) \frac{\partial}{\partial t}F_{G_N}((t, 0, \ldots, 0))\Big|_{t = \theta}  \nonumber \\  
& = O(1).  
\end{align*}
This implies, by the same arguments as those used in Section \ref{subs-eg1}, that consistent estimation of $\bm \beta$ in the $q$-coloring model on $G_N$ is impossible whenever $\beta_1 < 0$. In fact, by the symmetry of the colors in the $q$-coloring model, this implies consistent estimation is impossible whenever there exists some $1 \leq r \leq q-1$ for which $\beta_r < 0$.  \\

\small{\subsection*{Acknowledgment}  
B. B. Bhattacharya was supported in part by NSF CAREER Grant DMS-2046393. K. Ramanan was supported in part by ARO Grant W911NF2010133. The authors also thank the anonymous referees for their valuable comments which greatly improved the quality and the presentation of the paper. }

\normalsize

\end{document}